\newtheorem{thm}{Theorem}[section]
\newtheorem{prop}[thm]{Proposition}
\newtheorem{lem}[thm]{Lemma}
\newtheorem{cor}[thm]{Corollary}
\newtheorem{con}[thm]{Conjecture}
\numberwithin{equation}{section}
\newtheorem*{thm*}{Theorem}
\theoremstyle{definition}
\newtheorem{defi}[thm]{Definition}
\newtheorem{rem}[thm]{Remark}
\newcommand{\medtildeM}{ \ThisStyle{\stackon[+0.9pt]{$\mathrm{M}$}{ \stretchto{\scaleto{\mkern0.2mu\bm\sim}{3.3\LMpt} }{2\LMpt} }}}
\newcommand{\ZZ}{\mathbb{Z}}
\newcommand{\QQ}{\mathbb{Q}}
\newcommand{\quasimod}{\ensuremath{\medtildeM\!(\operatorname{Sl}_2(\ZZ))}}
\newcommand{\quasimodwt}[1]{\ensuremath{\medtildeM_{#1}(\operatorname{Sl}_2(\ZZ))}}
\newcommand{\cusp}[1]{S_{#1}(\operatorname{Sl}_2(\ZZ))}
\newcommand{\Z}{\mathcal{Z}}
\newcommand{\grDZ}{\operatorname{gr}_D\Z}
\newcommand{\grD}{\operatorname{gr}_D}
\newcommand{\fil}{\operatorname{Fil}}
\newcommand{\BARI}{\operatorname{BARI}_{\underline{\operatorname{al}},\operatorname{swap}}}
\newcommand{\bimap}{\boldsymbol{\beta}}
\newcommand{\wt}{\operatorname{wt}}
\newcommand{\dep}{\operatorname{dep}}
\newcommand{\ad}{\operatorname{ad}}
\newcommand{\id}{\operatorname{id}}
\newcommand{\B}{\mathcal{B}}
\newcommand{\QB}{\QQ\langle \B\rangle}
\newcommand{\RB}{R\langle\langle \B\rangle\rangle}
\newcommand{\LieB}{\operatorname{Lie}\langle \B\rangle}
\newcommand{\X}{\mathcal{X}}
\newcommand{\QX}{\QQ\langle \X\rangle}
\newcommand{\Y}{\mathcal{Y}}
\newcommand{\QY}{\QQ\langle \Y\rangle}
\newcommand{\Dbi}{\mathcal{D}_{\operatorname{bi}}}
\newcommand{\QDbi}{\QQ\langle \Dbi\rangle}
\newcommand{\LieDbi}{\operatorname{Lie}\langle \Dbi\rangle}
\newcommand{\ls}{\mathfrak{ls}}
\newcommand{\dm}{\mathfrak{dm}_0}
\newcommand{\lqq}{\mathfrak{lq}}
\newcommand{\bmzero}{\mathfrak{bm}_0}
\newcommand{\ariemp}{\{-,-\}\!_A}
\newcommand{\ari}[2]{\{#1,#2\}\!_A}
\newcommand{\arizero}[2]{\{#1,#2\}\!_{A,0}}
\newcommand{\der}[1]{d_{#1}^A}
\newcommand{\derr}[1]{d_{#1}^{A,r}}
\newcommand{\derl}[1]{d_{#1}^{A,l}}
\newcommand{\derzero}[1]{d_{#1}^{A,0}}
\newcommand{\derrzero}[1]{d_{#1}^{A,r,0}}
\newcommand{\derlzero}[1]{d_{#1}^{A,l,0}}
\newcommand{\derbull}[1]{d_{#1}^{A,\bullet}}
\newcommand{\derbullzero}[1]{d_{#1}^{A,\bullet,0}}
\newcommand{\conczero}{\cdot_0}
\newcommand{\shco}{\Delta_{\shuffle}}
\newcommand{\pizero}{\Pi_0}
\newcommand{\piY}{\Pi_\Y}
\newcommand{\thetaX}{\theta_\X}
\newcommand{\thetaY}{\theta_\Y}
\title{An extension of the linearized double shuffle Lie algebra}
\author{Annika Burmester}
\address{Graduate School of Mathematics, Nagoya University, Japan.}
\email{annika.burmester@math.nagoya-u.ac.jp}
\subjclass[2020]{
	11M32, 
	17B62, 
	05A30
}
\begin{document}
	\maketitle
\begin{abstract}
The linearized double shuffle Lie algebra $\ls$ is a well-studied Lie algebra, which reflects the depth-graded structure of multiple zeta values. We introduce a generalization $\lqq$, which is motivated from the $\mathbb{Q}$-algebraic structure of multiple q-zeta values and multiple Eisenstein series. Precisely, we show that $\lqq$ is a Lie algebra, where the Lie bracket is related to Ecalle's ari bracket on bimoulds, and give an embedding of $\ls$ into $\lqq$.
\end{abstract}

\section{Introduction}
	
\textbf{1. Multiple zeta values.} For $k_1,\ldots,k_d\in \ZZ_{\geq1},\ k_1\geq2$, a \emph{multiple zeta value} of \emph{weight} $k_1+\cdots+k_d$ and depth $d$ is
\begin{align*}
\zeta(k_1,\ldots,k_d)=\sum_{n_1>\cdots>n_d>0} \frac{1}{n_1^{k_1}\cdots n_d^{k_d}} \in \mathbb{R}.
\end{align*}
In recent decades, they have attracted much interest due to their appearance in various fields of mathematics and in physics, like number theory, algebraic geometry, knot theory, quantum field theory, as well as for their rich algebraic structure. One key feature of multiple zeta values is their two product expressions, the \emph{stuffle product} coming from the combinatorics of nested sums and the \emph{shuffle product} coming from their iterated integral expression. For example, we have for $k_1,k_2\geq2$
\begin{align*}
\zeta(k_1)\zeta(k_2)&=\zeta(k_1,k_2)+\zeta(k_2,k_1)+\zeta(k_1+k_2) \\
&=\sum_{j=2}^{k_1+k_2}\left(\binom{j-1}{k_1-1}+\binom{j-1}{k_2-1}\right) \zeta(j,k_1+k_2-j).
\end{align*}
The comparison of the two products along with some regularization leads to the \emph{extended double shuffle relations}. Ihara, Kaneko and Zagier conjectured in \cite{IKZ06} that these relations generate all the numerous algebraic relations among multiple zeta values.
One way to study the extended double shuffle relations is to pass to a linearized, depth-graded setting \cite[Section 8]{IKZ06}. Rephrasing this idea in a more algebraic way leads to the \emph{linearized double shuffle Lie algebra} $\ls$ studied by Brown \cite{Bro21}. Roughly, $\ls$ consists of non-commutative polynomials $\psi$ in two letters $x_0,x_1$, such that
\begin{itemize}
\item[(i)] $\psi$ does not have weight $1$ terms or depth $1$ terms of even weight,
\item[(ii)] $\psi$ is primitive for the shuffle coproduct,
\item[(iii)] if we replace each $x_0^{n-1}x_1$ by $y_n$ and words ending in $x_0$ by $0$ in $\psi$, the result is primitive for the depth-graded stuffle coproduct.
\end{itemize}
The Lie bracket on $\ls$ is given by the \emph{Ihara bracket} $\{-,-\}$. By construction and \cite[Conjecture 1]{IKZ06}, we expect for the depth-graded algebra $\grDZ$ of multiple zeta values that there is an algebra isomorphism
\begin{align} \label{eq:iso_grZ_Uls}
\grDZ/(\zeta(2))\overset{?}{\simeq} \mathcal{U}(\ls)^\vee,
\end{align} 
where $(-)^\vee$ denotes the graded dual with respect to the weight.
The Lie algebra $\ls$ is more accessible than the extended double shuffle relations. For instance, computing the dimensions of $\ls$ provides an upper bound for the weight- and depth-homogeneous components of the algebra $\Z$ of multiple zeta values. Passing to the associated depth-graded reveals a connection of multiple zeta values and cusp forms for $\operatorname{Sl}_2(\ZZ)$, as studied in \cite{BK97}, \cite{Bro21}.

\vspace{0,3cm}
\textbf{2. Variations of multiple zeta values and Eisenstein series.} An approach to understanding the algebra of multiple zeta values and the extended double shuffle relations is to consider variants of them, where some structures become simpler. \\
\emph{q-analogs} of multiple zeta values are one such approach, those are $q$-series degenerating to multiple zeta values under the limit $q\to1$. There are lots of different models of them in the literature, one of them are the \emph{Schlesinger-Zudilin multiple $q$-zeta values}
\begin{align*}
\zeta_q^{\operatorname{SZ}}(s_1,\ldots,s_l)=\sum_{0<n_1<\cdots< n_l} \frac{q^{n_1s_1}}{(1-q^{n_1})^{s_1}}\cdots \frac{q^{n_ls_l}}{(1-q^{n_l})^{s_l}} \in \QQ[[q]],
\end{align*} 
where $s_1,\ldots,s_l\in \ZZ_{\geq0},\ s_l\geq1$ and $q$ is just a formal variable. We have reversed the order of the summation indices compared to multiple zeta values to make the corresponding Lie algebra structure introduced below more naturally compatible with $\ls$. In \cite{BK20} it is shown that many models for multiple $q$-zeta values are contained in the space $\Z_q$ spanned by the Schlesinger-Zudilin multiple $q$-zeta values, so we may study all of them simultaneously. One big advantage of $q$-analogs of multiple zeta values is that we deal with $q$-series instead of real numbers. This allows for addressing questions about linear independence, as for example in \cite{BK20}, which are currently inaccessible for multiple zeta values. From our point of view the main advantage of multiple $q$-zeta values is that instead of two product expressions together with regularization (i.e., the extended double shuffle relations), it conjecturally suffices to consider the usual stuffle product together with the invariance under some involution $\tau$, which reads explicitly for $m_i\geq0,\ k_i\geq1$
\begin{align*}
\zeta_q^{\operatorname{SZ}}(\{0\}^{m_1},k_1,\ldots,\{0\}^{m_d},k_d)=\zeta_q^{\operatorname{SZ}}(\{0\}^{k_d-1},m_d+1,\ldots,\{0\}^{k_1-1},m_1+1).
\end{align*}
\emph{Multiple Eisenstein series}, which were introduced in \cite{GKZ06}, have a deep connection to the space $\Z_q$. They are defined for $k_1,\ldots,k_d\in \ZZ_{\geq2}, k_d\geq 3,$ by 
\[\mathbb{G}_{k_1,\ldots,k_d}(\tau)=\sum_{\substack{0\prec \lambda_1\prec \cdots \prec \lambda_d \\ \lambda_i \in \ZZ\tau+\ZZ}} \frac{1}{\lambda_1^{k_1}\cdots \lambda_d^{k_d}} \qquad (\tau\in \mathbb{H}).\]
The ordering on $\ZZ\tau+\ZZ$ is understood as $m_1\tau+n_1 \prec m_2\tau+n_2$ if and only if $m_1< m_2$ or $m_1=m_2$ and $n_1<n_2$. The Fourier expansion of multiple Eisenstein series was computed in \cite{Ba12}, the constant term of $\mathbb{G}_{k_1,\ldots,k_d}(\tau)$ is exactly $\zeta(k_1,\ldots,k_d)$ and also the other Fourier coefficients are $\QQ$-linear combinations of multiple zeta values.
\\ 
The \emph{combinatorial bi-multiple Eisenstein series}  $G\binom{k_1,\ldots,k_d}{m_1,\ldots,m_d}$ from \cite{BB23} span the space $\Z_q$ and their single indexed versions $G(k_1,\ldots,k_d)$ conjecturally satisfy the same $\QQ$-algebraic relations as multiple Eisenstein series. Thus, we view them as a rational $q$-series counterpart of multiple Eisenstein series. Since the construction of the combinatorial bi-multiple Eisenstein series depends on a rational solution to the extended double shuffle equations, they are not unique and cannot be made fully explicit. The extension to bi-indices encounters partial derivatives with respect to $q\frac{d}{dq}$, and hence allows to describe all expected relations among combinatorial bi-multiple Eisenstein series with the usual (bi-)stuffle product and the invariance under some involution swap. Conjecturally (\cite[Remark 6.11]{BB23}), already the combinatorial multiple Eisenstein series $G(k_1,\ldots,k_d)$ suffice to span the space $\Z_q$. This point of view gives another motivation for a deeper understanding of the algebraic structure of $\Z_q$. In particular, $\Z_q$ contains all quasi-modular forms and can be seen as an extension of them (see e.g., \cite{BI24}).

\vspace{0,3cm}
\textbf{A bi-graded Lie algebra for $\Z_q$.}  Inspired by the linearized double shuffle Lie algebra $\ls$ for multiple zeta values, we study the relations in the algebra $\Z_q$ modulo products, lower depths, and quasi-modular forms. This leads to the notion of a bigraded space $\lqq$, which roughly consists of non-commutative polynomials in infinitely many letters $b_i$, $i\geq0$, such that
\begin{itemize}
\item[(i)] $\psi$ has zero coefficient at $b_0$ and no depth $1$ terms of even weight,
\item[(ii)] $\psi$ is primitive for the shuffle coproduct,
\item[(iii)] $\psi$ is invariant under some involution $\tau$.
\end{itemize}
The main results in this work are the following.
\begin{thm} \label{main_thm} 
\begin{itemize}  \setlength\itemsep{0.8em}
\item[(1)] The pair $(\lqq,\ariemp)$ is a Lie algebra.
\item[(2)] There is a non-trivial derivation $\delta$ on the Lie algebra $(\lqq,\ariemp)$, which increases the weight by $2$.
\item[(3)] We have an embedding of Lie algebras
$\theta: (\ls,\{-,-\})\hookrightarrow (\lqq,\ariemp)$.
\end{itemize}
\end{thm}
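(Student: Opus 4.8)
The plan is to realize $\lqq$ inside Ecalle's formalism of bimoulds and to read off all three statements from the fact that the bracket $\ariemp$ is, up to the normalizing condition~(i), the restriction of the ari bracket. First I would set up the map $\bimap$ sending a non-commutative polynomial in the letters $b_i$ to its associated bimould, and record the dictionary between the defining data of $\lqq$ and the flexion formalism: primitivity for the shuffle coproduct (condition~(ii)) corresponds to alternality of $\bimap(\psi)$, invariance under the involution $\tau$ (condition~(iii)) corresponds to swap-invariance, and the normalization in condition~(i) (vanishing $b_0$-coefficient, no even depth-one part) fixes the correct low-weight behaviour. In these terms $\bimap$ identifies $\lqq$ with a subspace of the space $\BARI$ of alternal, swap-invariant bimoulds.

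For part~(1) I would invoke the known fact that $\ariemp$ is a Lie bracket on bimoulds and that it preserves both alternality and swap-invariance, so that $\BARI$ is a Lie subalgebra. Two points remain. The first, and central, is to verify that the bracket defined directly on polynomials in the $b_i$ agrees under $\bimap$ with Ecalle's ari bracket; this is the main combinatorial computation, matching the word-level formula for $\ariemp$ against the flexion-operator definition. The second is that the normalization of condition~(i) is stable under the bracket. Granting these, antisymmetry and the Jacobi identity for $\ariemp$ on $\lqq$ are inherited from the ari bracket, and closure of $\lqq$ follows from closure of $\BARI$.

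For part~(3) I would write $\theta$ as an explicit conversion of words in $x_0,x_1$ into combinations of words in the $b_i$, modelled on the passage from the iterated-integral picture to the $q$-series picture; one checks it lands in the $b_0$-free part demanded by condition~(i). The inclusion $\theta(\ls)\subseteq\lqq$ then amounts to showing that shuffle-primitivity is transported to condition~(ii) and that the depth-graded stuffle condition on $\ls$ is transported to $\tau$-invariance, i.e.\ condition~(iii); injectivity is immediate from the shape of $\theta$. The substantive step is the bracket identity $\theta(\{f,g\})=\ari{\theta(f)}{\theta(g)}$, which on the bimould side reduces to the compatibility of the Ihara bracket with the ari bracket on the image bimoulds (those trivial in the second family of flexion variables), a statement I would adapt from Ecalle's dictionary to the present normalization.

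For part~(2) I would construct $\delta$ from the operator induced by $q\frac{d}{dq}$, which raises the weight by $2$, realized on the bimould side as a flexion-type derivation and on words as a shift-and-weight operator in the $b_i$. The derivation identity $\delta(\ari{f}{g})=\ari{\delta f}{g}+\ari{f}{\delta g}$ follows from compatibility of this operator with the ari bracket, and non-triviality is exhibited by evaluating $\delta$ on a low-weight generator of $\lqq$ with nonzero image. I expect the genuine obstacles to be the two bracket-matching steps — the identification of $\ariemp$ with the ari bracket in part~(1) and the intertwining property of $\theta$ in part~(3) — since both require a careful and lengthy translation through the flexion operators, while the preservation of swap-invariance under $\ariemp$ is the delicate structural input on which the entire argument rests.
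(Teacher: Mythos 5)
Your bimould route is legitimate for parts (1) and (2), and it is in fact the paper's own \emph{alternative} proof of part (1): the paper's primary argument works directly in $\QB$ (conditions (i), (ii) of Definition \ref{def:lq} are preserved trivially because a bracket of positive-depth elements produces only depth $\geq 2$ terms; condition (iii) follows from the post-Lie structure of Proposition \ref{prop:ari_post_lie}; and the delicate $\tau$-invariance (iv) is handled by the section map $\sec$, the $\rho$-invariance of $\pizero(\Phi)$ from Proposition \ref{prop:lq_rho_inv}, and Corollary \ref{cor:Lie_brack_tau_inv}), while Theorem \ref{thm:lq_BARI_Lieiso} establishes the Lie isomorphism $(\lqq,\ariemp)\simeq(\BARI,\operatorname{ari})$ — note that $\bimap$ is onto $\BARI$, not merely into, which is what makes closure under $\operatorname{ari}$ transfer back — and the paper states explicitly that this, combined with Proposition \ref{prop:BARI_Liealg} (Schneps: alternality of $A$ and $\operatorname{swap}(A)$ gives push-invariance, whence swap-invariance of the ari bracket), gives an alternative proof of (1). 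Your two flagged computations are correctly identified as the real work there; one small caveat is that $\tau$ must first be transported to $\QDbi$ as $\tau_{\Dbi}=\sec\circ\tau\circ\pizero$ before it matches $\operatorname{swap}$. For part (2) your plan likewise matches the paper: $\delta(D_{k,m})=D_{k+1,m+1}$ is the depth-graded version of the $q\frac{d}{dq}$-type derivation of \cite{BI24}, realized on bimoulds as multiplication by $X_1Y_1+\cdots+X_dY_d$; swap-compatibility is a telescoping identity (Proposition \ref{prop:d_tau_commute}) and the Leibniz rule a direct computation.

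Part (3), however, contains a genuine gap. You take the image of $\theta$ to consist of bimoulds ``trivial in the second family of flexion variables,'' i.e., you propose a one-component conversion of $x$-words into $b$-words. No such map can land in $\lqq$: apart from multiples of $b_1$, an element supported on a single family is never $\tau$-invariant. Concretely, for the depth-one generator $\phi=\ad(x_0)^{k-1}(x_1)$ of $\ls$ ($k\geq3$ odd) one has $\pizero(\thetaX(\phi))=b_0^{k-1}b_1$, while $\tau(b_0^{k-1}b_1)=b_k$, so condition (iv) of Definition \ref{def:lq} fails for $\thetaX(\phi)$ alone; it holds only for the two-component map $\theta(\phi)=\thetaX(\phi)+\thetaY(\piY(\phi))$ (in mould language $A\mapsto A(X_1,\ldots,X_d)+\operatorname{swap}(A)(Y_1,\ldots,Y_d)$, as in the paper's closing remark), where $\tau$ \emph{exchanges} the two summands via the identity $\tau\circ\pizero\circ\thetaX=\thetaY\circ\piY$ of Lemma \ref{lem:thetas_proj}, and where both primitivity conditions (iii) and (iv) in the definition of $\ls$ are used, one per summand. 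This two-component shape creates precisely the work your plan omits: one must prove the vanishing of the cross terms, $\ari{\thetaX(\phi)}{\thetaY(\piY(\psi))}=0$ (Proposition \ref{prop:thetaXY_ari}), and the bracket compatibility of the $Y$-side component, $\ari{\thetaY(\piY(\phi))}{\thetaY(\piY(\psi))}=\thetaY(\piY(\{\phi,\psi\}))$ (Proposition \ref{prop:thetaY_Lie_bracks}). Neither is formal: both rest on the $\rho$-invariance of $\pizero(\thetaX(\phi))$ for $\phi\in\ls$, which is proved through the antipode $S_0$ together with the parity theorems (Theorem \ref{thm:lq_parity} and Theorem \ref{thm:ls_parity}, $\ls_{k,d}=0$ for $k\not\equiv d\bmod 2$), and on the $\tau$-conjugation formulas of Proposition \ref{prop:dA0_and_tau}. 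Without the second summand, $\theta(\ls)\not\subseteq\lqq$, and the intertwining identity you aim for cannot even be posed inside $\lqq$.
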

The Lie bracket $\ariemp$ is introduced in Definition \ref{def:Lie_bracket_lq}, and the derivation $\delta$ in Definition \ref{def:deriv}. By construction, we get a surjective algebra morphism
\begin{align} \label{eq:iso_grZq_Ulq}
\mathcal{U}(\lqq)^\vee \twoheadrightarrow\grDZ_q/\quasimod.
\end{align}
In contrast to \eqref{eq:iso_grZ_Uls}, this map is only a surjection and not an isomorphism. In other words, computing the dimensions of the Lie algebra $\lqq$ provides upper bounds for the weight- and depth-homogeneous components of $\Z_q$, but these bounds are not sharp in general.

\vspace{0,3cm}
\textbf{The Lie algebra $\lqq$ and bimoulds.} The theory of bimoulds was introduced in \cite{Ec11} and further developed in \cite{Sc15}. Roughly, a bimould is a collection $M=(M_d)_{d\geq0}$ of polynomials $M_d\in\QQ[X_1,Y_1,\ldots,X_d,Y_d]$ in $2d$ variables. By \cite{Sc15}, the space $\BARI$ of alternal and swap invariant bimoulds (with even depth $1$ component)\footnote{An explanation of these properties is given in Section \ref{sec:bimoulds}.} equipped with the ari bracket is a Lie algebra. In \cite{Sc15}, the space $\BARI$ just occurs as a natural extension of a Lie algebra of moulds related to multiple zeta values. But as also mentioned in \cite{Kü19}, it also encodes depth-graded algebraic relations in the algebra $\Z_q$. We show in Section \ref{sec:bimoulds} that there is an isomorphism of Lie algebras
\[(\lqq,\ariemp)\xrightarrow{\sim}(\BARI,\operatorname{ari}).\]

\vspace{0,3cm}
\textbf{Acknowledgment.} I would like to thank Ulf Kühn and Henrik Bachmann for comments on earlier versions of this work and related discussions. This project was supported by JSPS KAKENHI Grant 24KF0150.

\section{Balanced multiple $q$-zeta values} \label{sec:balanced_qmzv}

In this section we review balanced multiple $q$-zeta values as introduced in \cite{Bu24}, to give a precise and clear description of the relations in the algebra $\Z_q$. We will not give the explicit construction, but instead explain the algebraic structure in detail.

Let $\B=\{b_0,b_1,b_2,\ldots\}$ be an alphabet, whose elements we call letters. By $\QB$ we denote the free non-commutative algebra generated by $\B$. To the monic monomials in $\QB$ we refer as words, and the empty word we denote by $1$. We write $\B^*$ fo the set of all words with letters in $\B$ (including the empty word $1$). For any $\QQ$-algebra $R$, we let $\RB$ be the algebra of non-commutative power series over $\B$ with coefficients in $R$.
\begin{defi} For a word $w=b_{s_1}\cdots b_{s_l}\in \QB$, we define the \emph{weight} and \emph{depth} as
\begin{align} \label{eq:wt_dep_QB}
\wt(w)=s_1+\cdots+s_l+\#\{i\mid s_i=0\}, \qquad \dep(w)=l-\#\{i\mid s_i=0\}.
\end{align}
We regard the empty word $1$ to be of weight and depth $0$.
\end{defi}
Both notions define a grading on the vector space $\QB$, we denote the homogeneous subspace of weight $k$ and depth $d$ by $\QB_{k,d}$. So,
\begin{align*}
\QB=\bigoplus_{k,d\geq0} \QB_{k,d}.
\end{align*}
In the following, we will use this notation also for subspaces of $\QB$.

\begin{defi} \label{def:bal_quasish}
The \emph{balanced quasi-shuffle product} on $\QB$ is recursively defined by $1\ast_b w=w\ast_b 1=w$ and 
\begin{align} \label{eq:bal_qsh}
	b_iv\ast_b b_jw=b_i(v\ast_b b_jw)+b_j(b_iv\ast_b w)+\delta_{ij>0} b_{i+j}(v\ast_b w)
\end{align}
for $v,w\in \QB$. Here, $\delta_{ij>0}$ is equal to $1$ if $ij>0$ and $0$ else. 
\end{defi}
Restricting to the letters $b_i$ for $i>0$, we obtain the usual stuffle product for multiple zeta values. Just allowing the letters $b_0,b_1$, we recover the usual shuffle product. Therefore, we call this product 'balanced'. Observe that the balanced quasi-shuffle product $\ast_b$ is graded for the weight and filtered for the depth.

Denote by $\QB^0$ the subspace spanned by words, which do not end with $b_0$. This subspace is preserved by the balanced quasi-shuffle product $\ast_b$, so $\QB^0$ is a subalgebra of $(\QB,\ast_b)$.

\begin{defi} \label{def:tau}
	We define the involution $\tau:\QB^0\to \QB^0$ by 
\begin{align*}
	\tau(b_0^{m_1}b_{k_1}\cdots b_0^{m_d}b_{k_d})=b_0^{k_d-1}b_{m_d+1}\cdots b_0^{k_1-1}b_{m_1+1},
\end{align*} 
where $k_1,\ldots,k_d\geq1,\ m_1,\ldots,m_d\geq0$.
\end{defi}

Note that the map $\tau$ is homogeneous for both the weight and the depth.

\begin{thm} \label{thm:bal_qmzv} (\cite{Bu24}) There is a $\tau$-invariant and surjective algebra morphism
	\[(\QB^0,\ast_b)\to \Z_q,\quad b_{s_1}\cdots b_{s_l}\mapsto \zeta_q(s_1,\ldots,s_l).\]
\end{thm}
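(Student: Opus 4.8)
The plan is to realize the balanced multiple $q$-zeta values $\zeta_q(s_1,\ldots,s_l)$ as explicit elements of $\QQ[[q]]$ through their defining nested-sum presentation from \cite{Bu24}, and then verify the three asserted properties separately: well-definedness, the algebra-morphism property, and $\tau$-invariance, followed by surjectivity. First I would recall the explicit $q$-series attached to each word and record how the two kinds of letters contribute to the gradings: a letter $b_0$ contributes to weight and depth as in \eqref{eq:wt_dep_QB} like a marker, whereas a letter $b_i$ with $i>0$ behaves like a genuine summation exponent. Restricting to $\QB^0$, i.e. to words not ending in $b_0$, guarantees $q$-adic convergence, so the assignment $b_{s_1}\cdots b_{s_l}\mapsto \zeta_q(s_1,\ldots,s_l)$ extends by linearity to a map on all of $\QB^0$ whose values are honest $q$-series.

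The heart of the argument is that $\ast_b$ computes the product of two such $q$-series. I would prove this by writing each factor as a nested sum over strictly increasing indices and decomposing the product of the two summation domains according to the relative order of the two outermost indices. The three cases---strict inequality in each direction and equality of the leading indices---match exactly the three terms on the right-hand side of \eqref{eq:bal_qsh}: the first two arise from interleaving one outer index past the other, and the diagonal case $n=m$ merges the two leading letters into $b_{i+j}$. The Kronecker factor $\delta_{ij>0}$ is precisely the bookkeeping that the diagonal produces a genuine new summand only when both leading letters carry a positive index; when one of them is $b_0$ the corresponding factor is constant in its summation variable, and the diagonal is already absorbed into the interleaving terms. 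Iterating this decomposition and matching it against the recursive definition of $\ast_b$ yields multiplicativity, by induction on the total depth.

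For the $\tau$-invariance, meaning $\zeta_q\circ\tau=\zeta_q$ on $\QB^0$, I would use the duality built into the Schlesinger--Zudilin presentation displayed in the introduction, namely that $\tau$ interchanges the blocks $\{0\}^{m_i}b_{k_i}$ with $\{0\}^{k_{d+1-i}-1}b_{m_{d+1-i}+1}$. At the level of $q$-series this is a partial-fraction / reflection identity for the summand rather than a term-by-term equality; I would establish it either by recognizing both sides as the same coefficient extraction from a symmetric bivariate generating series, or by the change of summation variables implementing the reflection. Surjectivity then follows once the Schlesinger--Zudilin values are themselves expressed as $\QQ$-linear combinations of the balanced values, equivalently once the transition between the two models is shown to be triangular and invertible in each weight, so that the spanning set of $\Z_q$ lies in the image.

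The step I expect to be the main obstacle is the $\tau$-invariance: unlike the quasi-shuffle identity, which reduces to a transparent decomposition of summation ranges, the symmetry under $\tau$ is invisible term-by-term and rests on the nontrivial reflection identity relating the two index presentations. The accompanying delicate point, present in all three parts, is the consistent treatment of the $b_0$-letters, which act as weight- and depth-shifting markers rather than ordinary summation exponents; this is exactly what forces both the restriction to $\QB^0$ and the $\delta_{ij>0}$ condition in the product, and keeping track of it carefully is where most of the care is needed.
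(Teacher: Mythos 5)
You are attempting to prove a statement that this paper does not prove at all: Theorem \ref{thm:bal_qmzv} is imported wholesale from \cite{Bu24}, and your proposal rests on a premise that fails there. The balanced multiple $q$-zeta values have no defining nested-sum presentation. In \cite{Bu24} they are constructed from the combinatorial bi-multiple Eisenstein series $G\binom{k_1,\ldots,k_d}{m_1,\ldots,m_d}$ of \cite{BB23}, whose construction depends on a choice of rational solution to the extended double shuffle equations and, as the introduction of the present paper states explicitly, ``cannot be made fully explicit''. The failure of your premise is visible already in depth one: Section \ref{sec:balanced_qmzv} records $\zeta_q(k)=-\frac{B_k}{2k!}+\frac{1}{(k-1)!}\sum_{m,n>0}n^{k-1}q^{mn}$, which has a nonzero rational constant term, whereas any Schlesinger--Zudilin-type nested sum has constant term $0$. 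In the actual proof, the balanced quasi-shuffle product is inherited from the bi-stuffle product of the $G$'s and the $\tau$-invariance from their swap invariance, both established in \cite{BB23} by a generating-series construction; neither can be verified summand by summand, because the values themselves are not explicit sums.

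Concretely, your central step proves the wrong product law. If every letter, including $b_0$, carries its own summation index (with constant factor $1$ for $b_0$), then the decomposition of the product of two strictly ordered summation domains according to the relative position of the two outermost indices is exact, and the diagonal case always contributes a merged letter $b_{i+j}$ with no restriction $ij>0$: this is precisely the Schlesinger--Zudilin quasi-shuffle, under which for instance $\zeta_q^{\operatorname{SZ}}(b_0v)\cdot\zeta_q^{\operatorname{SZ}}(b_jw)$ contains a genuine diagonal term attached to $b_j(v\ast w)$. Your claim that when one leading letter is $b_0$ ``the diagonal is already absorbed into the interleaving terms'' is false, since the three cases $n<m$, $n>m$, $n=m$ are disjoint. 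If instead you treat the $b_0$'s as markers without their own indices (the bi-bracket picture), the two interleaving terms in \eqref{eq:bal_qsh} no longer correspond to any decomposition of summation ranges. Producing a model that simultaneously satisfies the balanced product \eqref{eq:bal_qsh}, with its $\delta_{ij>0}$, and the $\tau$-invariance is exactly the hard content of \cite{Bu24}, and it is not achievable by rearranging explicit sums. Your surjectivity sketch --- a triangular, depth-filtered comparison with the Schlesinger--Zudilin spanning set of $\Z_q$ --- is in the right spirit and close to what \cite{Bu24} does, but it presupposes the well-definedness and the product law that the rest of your argument does not deliver.
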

We call the images $\zeta_q(s_1,\ldots,s_l)$ the \emph{balanced multiple $q$-zeta values}. The map in Theorem \ref{thm:bal_qmzv} induces a notion of weight and depth on balanced multiple $q$-zeta values, we say that $\zeta_q(s_1,\ldots,s_l)$ is of weight $s_1+\cdots+s_l+\#\{i\mid s_i=0\}$ and depth $l-\#\{i\mid s_i=0\}$. 

For example, we have for $k\geq2$
\begin{align*}
\zeta_q(k)=-\frac{B_k}{2k!}+\frac{1}{(k-1)!}\sum_{m,n>0} n^{k-1}q^{mn},
\end{align*}
which equals the Fourier expansion of the classical Eisenstein series of weight $k$. Since $\Z_q$ is an algebra, we deduce that the algebra $\quasimod$ of all quasi-modular forms is contained in $\Z_q$. Note that the weight of quasi-modular forms and the weight of balanced multiple $q$-zeta values are compatible.

Similar to \cite[Remark 6.11.]{BB23} and as an analogue of \cite[Conjecture 1]{IKZ06}, we expect that the kernel of the map in Theorem \ref{thm:bal_qmzv} is fully generated by the $\tau$-invariance.

\begin{con} \label{con:Zq_all_rel} All algebraic relations in the algebra $\Z_q$ are a consequence of the balanced quasi-shuffle product and the $\tau$-invariance of the balanced multiple $q$-zeta values.
\end{con}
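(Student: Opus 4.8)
The plan is to recast Conjecture \ref{con:Zq_all_rel} as an isomorphism statement and attack it by a two-sided dimension count in each weight. Theorem \ref{thm:bal_qmzv} already gives that the realization map is a $\tau$-invariant surjective algebra morphism, so its kernel contains the $\ast_b$-ideal $J$ generated by all $w-\tau(w)$, $w\in\QB^0$. Writing $\mathcal{A}=(\QB^0,\ast_b)/J$, the map factors as a weight-graded surjection $\mathcal{A}\twoheadrightarrow\Z_q$, and Conjecture \ref{con:Zq_all_rel} is exactly the assertion that this surjection is injective. Since the weight is a grading and each graded piece is finite-dimensional (every letter contributes at least $1$ to the weight, so words of weight $k$ have length at most $k$), it suffices to prove $\dim_\QQ\mathcal{A}_k=\dim_\QQ(\Z_q)_k$ for all $k$. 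Surjectivity gives $\dim_\QQ\mathcal{A}_k\geq\dim_\QQ(\Z_q)_k$ for free, so the task splits into (a) an \emph{upper} bound $\dim_\QQ\mathcal{A}_k\leq N_k$ and (b) a matching \emph{lower} bound $\dim_\QQ(\Z_q)_k\geq N_k$.

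For step (a) I would work entirely combinatorially inside $(\QB^0,\ast_b)$. The balanced quasi-shuffle algebra is free commutative (by Hoffman's theorem it is isomorphic, as a Hopf algebra, to a shuffle algebra), so its graded dimensions are explicit, and computing $\dim_\QQ\mathcal{A}_k$ reduces to determining the rank contributed by the relators $w-\tau(w)$ in each weight; since $\tau$ is homogeneous for both weight and depth, this is a finite Hilbert-series/linear-algebra problem in each bidegree, from which I would extract a closed-form conjectural value $N_k$, organised by the depth filtration. One must be careful here: the surjection \eqref{eq:iso_grZq_Ulq} coming from the Lie algebra $\lqq$ is \emph{not} an isomorphism, so $\mathcal{U}(\lqq)^\vee$ only furnishes a non-sharp upper bound; the sharp $N_k$ has to be read off from the full presentation of $\mathcal{A}$ together with the contribution of the quasi-modular forms $\quasimod\subseteq\Z_q$, whose dimensions are classically known.

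The essential obstacle is step (b), the \emph{linear independence} of the balanced multiple $q$-zeta values modulo the relations already imposed in $\mathcal{A}$. This is where the $q$-setting is genuinely more favourable than the real one behind \eqref{eq:iso_grZ_Uls}: the values $\zeta_q(s_1,\dots,s_l)$ lie in $\QQ[[q]]$, so any putative $\QQ$-linear relation is visible in finitely many $q$-expansion coefficients and independence can be probed directly, without transcendence input. Concretely, I would try to exhibit in each weight $k$ a family of balanced multiple $q$-zeta values of cardinality $N_k$ whose matrix of low-order $q$-coefficients has full rank, leaning on the independence results of \cite{BK20} and on the explicit $q$-expansions behind the inclusion $\quasimod\subseteq\Z_q$.

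I expect step (b) to be the point at which a complete proof is out of reach with current technology: Conjecture \ref{con:Zq_all_rel} is an exact analogue of \cite[Conjecture 1]{IKZ06}, and the fact that even the coarser depth-graded statement \eqref{eq:iso_grZq_Ulq} is only known as a surjection signals that controlling \emph{all} relations requires new structural input. Realistically, this plan would yield a precise conjectural dimension formula from the upper-bound computation (a), a reduction of the conjecture to a single clean linear-independence assertion for a distinguished family of $q$-series, and an unconditional proof in low weights by explicit computation; the general case would hinge on an independence theorem for multiple $q$-zeta values that is not presently available.
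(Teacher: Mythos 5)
You should first be aware that the statement you are proving is stated in the paper as a \emph{conjecture} --- the analogue of \cite[Conjecture 1]{IKZ06} and of \cite[Remark 6.11]{BB23} --- and the paper offers no proof of it, so there is nothing to compare against; your own write-up concedes at step (b) that the plan cannot currently be completed. Judged as a reduction rather than a proof, it contains one genuine gap: the passage from the surjection $\mathcal{A}=(\QB^0,\ast_b)/J\twoheadrightarrow\Z_q$ to a weight-by-weight dimension count presupposes that $\Z_q$ is graded by weight, but the paper is explicit that this grading is itself only a \emph{consequence} of Conjecture \ref{con:Zq_all_rel}. A priori $\Z_q$ is just a subalgebra of $\QQ[[q]]$, and the subspaces $\Z_{q,k}$ spanned by the weight-$k$ values need not be in direct sum; the map of Theorem \ref{thm:bal_qmzv} is graded only on the source. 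Consequently, even if you proved $\dim_\QQ\mathcal{A}_k=\dim_\QQ\Z_{q,k}$ for every $k$ (equivalently, injectivity on each homogeneous piece), the kernel of the full map could still contain inhomogeneous elements --- e.g.\ a weight-$2$ value coinciding with a combination of weight-$3$ values --- and such cross-weight relations are invisible to your count. Your ``single clean linear-independence assertion'' must therefore concern the union of the distinguished families over \emph{all} weights simultaneously, a strictly stronger statement that also destroys the comfort that each verification is a finite computation.

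Two smaller points. The appeal to Hoffman's theorem is legitimate: $\ast_b$ is the quasi-shuffle product attached to the commutative, associative product $b_i\diamond b_j=\delta_{ij>0}\,b_{i+j}$ on letters, so $(\QB,\ast_b)$ is free commutative and the graded dimensions of $(\QB^0,\ast_b)$ are computable; since $\tau$ is homogeneous in weight and depth, the rank of the relator space spanned by the $w-\tau(w)$ is indeed finite linear algebra in each bidegree. But ``closed-form $N_k$'' is optimistic: no such formula is known, and the only available prediction is the conjectural dimension formula of \cite[Conjecture 1.3]{BK20} for $\Z_q$ itself, whose agreement with $\mathcal{A}$ is part of what is being conjectured --- note also that the paper's depth-graded map \eqref{eq:iso_grZq_Ulq} is only a surjection, so it cannot certify sharpness from the Lie-algebra side. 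Your identification of step (b) as the true obstruction is correct and consistent with the paper's framing: full-rank checks on truncated $q$-expansions give unconditional linear independence (hence lower bounds) in fixed low weights, and that, together with the reduction --- once repaired to a global independence statement --- is the realistic content of this plan.
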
 

Since the balanced quasi-shuffle product and the map $\tau$ are weight-homogeneous, Conjecture \ref{con:Zq_all_rel} would imply that the algebra $\Z_q$ is graded by weight
\begin{align*}
\Z_q \overset{?}{=} \bigoplus_{k\geq0} \Z_{q,k},
\end{align*}
where $\Z_{q,k}$ is spanned by all balanced multiple $q$-zeta values of weight $k$. The balanced quasi-shuffle product is only filtered for the depth, this implies that $\Z_q$ cannot be graded by depth. In this paper, we focus on the associated depth-graded relations. This means, we define a increasing filtration on the algebra $\Z_q$ given by $\fil_d\Z_q=0$ for $d<0$ and
\begin{align*}
\fil_d \Z_q=\operatorname{span}_\QQ\{\zeta_q(s_1,\ldots,s_l)\mid l-\#\{i\mid s_i=0\}\leq d\}, \qquad d\geq0.
\end{align*}
Then, we want to study the relations in the algebra
\begin{align} \label{eq:def_grDZq}
\grDZ_q=\bigoplus_{d\geq0} \big(\fil_d \Z_q/ \fil_{d-1}\Z_q\big).
\end{align}

\section{The definition of the space $\lqq$} \label{sec:def_lq}

In this section we will explain and motivate the definition of the space $\lqq$. A similar consideration for multiple zeta values is described in \cite{ENR03}. Then in Section \ref{sec:lq_Lie_alg} and \ref{sec:lq_Lie_alg_proof}, we prove that this space is equipped with a Lie algebra structure.

\vspace{0,2cm}
By \cite[Theorem 4.3]{Bu24-2}, there is a unique extension of the morphism in Theorem \ref{thm:bal_qmzv}
\begin{align}
\zeta_q^{\operatorname{reg}}: \QB\to \Z_q,\quad w\mapsto \zeta_q^{\operatorname{reg}}(w),
\end{align}
such that $\zeta_q^{\operatorname{reg}}(w)=\zeta_q(w)$ for $w\in \QB^0$ and $\zeta_q^{\operatorname{reg}}(b_0)=0$.
Denote by $\overline{\zeta_q}(w)$ the equivalence class of $\zeta_q^{\operatorname{reg}}(w)$ in the quotient 
\begin{align*} \label{eq:quotient_space}
R_{\overline{\zeta_q}}=\grD\Big(\Z_q/\big(\quasimod\cup \Z_{q,>0}^2\big)\Big).
\end{align*}
By Conjecture \ref{con:Zq_all_rel}, all relations in $\Z_q$ are obtained from the balanced quasi-shuffle product and the $\tau$-invariance of balanced multiple $q$-zeta values. To obtain the definition of $\lqq$, we reformulate this conjecture in terms of the generating series
\begin{align}
\Phi_{\overline{\zeta_q}}=\sum_{w\in \B^*} \overline{\zeta_q}(w)\ \!w \quad \in R_{\overline{\zeta_q}}\langle\langle \B\rangle\rangle.
\end{align}
The first step is to determine the associated depth-graded of the balanced quasi-shuffle product and the involution $\tau$.
As for the space $\Z_q$, we define a increasing filtration on the space $\QB$ by $\fil_d\QB=0$ for $d<0$ and 
\begin{align*}
\fil_d \QB=\operatorname{span}_\QQ\{w\mid \dep(w)\leq d\}, \qquad d\geq0.
\end{align*}
If we choose $\phi\in \fil_d\QB$ in the following, we always assume $d$ to be minimal, i.e., we assume that $\phi$ contains at least one word of depth $d$.

For $\phi\in \QB$ and $d\geq0$, we denote by $\phi_d$ the projection of $\phi$ to the homogeneous component $\QB_d$ of depth $d$. So, any $\phi\in \fil_d\QB$ can be uniquely written as
\[\phi=\sum_{l=0}^{d} \phi_l,\quad \phi_d\neq 0.\]
Then, there is a surjective map
\begin{align} \label{eq:def_grD}
\grD:\QB&\to\bigoplus_{d\geq0} \big(\fil_d \QB /\fil_{d-1} \QB\big) \simeq \QB, \\
\sum_{l=0}^d \phi_l&\mapsto \phi_d. \nonumber
\end{align}

Recall that the shuffle product $\shuffle$ on $\QB$ is defined by $1\shuffle w=w\shuffle 1=w$ and 
\begin{align} \label{eq:shuffle}
b_iv\shuffle b_j w= b_i(v\shuffle b_j w)+b_j(b_iv\shuffle w),\qquad v,w\in\QB.
\end{align}

\begin{lem} \label{lem:depthgr_balqsh_tau}
We have the following equalities of maps on $\QB^{\otimes 2}$ resp. $\QB^0$
\begin{flalign*}
\qquad \text{(i)} \quad & \grD \circ \ast_b=\shuffle, && \\
\qquad \text{(ii)} \quad & \grD \circ  \tau=\tau. &&
\end{flalign*}
\end{lem}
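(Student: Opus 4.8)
The plan is to prove both statements by direct computation, exploiting the fact that $\grD$ simply extracts the top-depth component, so I only need to track which terms of the products and of $\tau$ preserve depth and which strictly lower it.

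For part (i), I would compare the defining recursions for $\ast_b$ and $\shuffle$. Looking at equation \eqref{eq:bal_qsh}, the first two terms on the right, $b_i(v\ast_b b_jw)+b_j(b_iv\ast_b w)$, are exactly the two terms appearing in the shuffle recursion \eqref{eq:shuffle}, while the third term $\delta_{ij>0}b_{i+j}(v\ast_b w)$ is the extra stuffle-type contraction. The key observation is that $\wt(w)=s_1+\cdots+s_l+\#\{i\mid s_i=0\}$ and $\dep(w)=l-\#\{i\mid s_i=0\}$, so fusing two positive letters $b_i,b_j$ into $b_{i+j}$ keeps the total weight fixed (since $i+j$ replaces $i+j$ and no new $b_0$ is created) but drops the length by one without changing the count of zero-letters, hence strictly decreases the depth by one. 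Therefore the contraction term always lies in a strictly lower depth filtration level than the shuffle terms. Taking $\grD$ of $u\ast_b w$ for words $u,w$ and using that $\grD$ picks out only the top-depth part, I would argue by induction on the total length of $u$ and $w$ that the contraction terms are annihilated and what survives is precisely $u\shuffle w$; the inductive step reduces directly to the recursions above. I should be slightly careful to check that the depth is additive under the shuffle terms so that $\grD(b_iv\ast_b b_jw)$ genuinely equals $b_i\,\grD(v\ast_b b_jw)+b_j\,\grD(b_iv\ast_b w)$, but this follows because prepending a single letter shifts depth by a fixed amount ($+1$ for $b_i$ with $i>0$, and by $0$ net for $b_0$ in the sense that the $\#\{s_i=0\}$ correction cancels the length increase).

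For part (ii), the statement $\grD\circ\tau=\tau$ on $\QB^0$ is easier: the remark immediately preceding Definition \ref{def:tau} already records that $\tau$ is homogeneous for both weight and depth. Since $\tau$ sends each homogeneous depth-$d$ word to a homogeneous depth-$d$ word, it preserves each filtration step $\fil_d\QB$ and commutes with the projection to the top-depth component. Concretely, for $\phi=\sum_{l=0}^d\phi_l$ with $\phi_d\neq0$, depth-homogeneity gives $\tau(\phi)=\sum_{l=0}^d\tau(\phi_l)$ with $\tau(\phi_l)$ again of depth $l$, so $\grD(\tau(\phi))=\tau(\phi_d)=\tau(\grD(\phi))$. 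This is essentially a restatement of the homogeneity remark and requires only that I confirm $\dep(\tau(w))=\dep(w)$ directly from the formula in Definition \ref{def:tau}, which I would do by counting: the word $b_0^{m_1}b_{k_1}\cdots b_0^{m_d}b_{k_d}$ has depth $d$ (the number of positive letters $b_{k_i}$), and its image $b_0^{k_d-1}b_{m_d+1}\cdots b_0^{k_1-1}b_{m_1+1}$ also has exactly $d$ positive letters $b_{m_i+1}$, since each $m_i+1\geq1$.

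The main obstacle is the bookkeeping in part (i): I must verify cleanly that the contraction term strictly drops the depth under the full recursion and that no cancellation or depth-raising occurs when the single letters $b_i,b_j$ are prepended, so that the top-depth part is isolated correctly at every stage of the induction. The subtle point is the interaction between the length of a word and its count of $b_0$'s in the definition of depth, which makes ``depth'' behave differently from ``length'' under prepending $b_0$ versus a positive letter; I would handle this by checking the depth bookkeeping separately in the cases $i=0$, $j=0$, and $i,j>0$. Once that is settled, both identities follow by matching recursions and passing to the top-depth component.
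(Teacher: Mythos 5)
Your proposal is correct and follows essentially the same route as the paper's proof, which simply observes that the contraction term $\delta_{ij>0}\,b_{i+j}(v\ast_b w)$ in \eqref{eq:bal_qsh} is of strictly lower depth and hence vanishes under $\grD$, leaving the shuffle recursion, and that (ii) follows from the depth homogeneity of $\tau$. Your inductive bookkeeping (depth drops by one under fusing two positive letters, prepending a letter shifts depth by a fixed amount, and the explicit count showing $\dep(\tau(w))=\dep(w)$) just fills in the details the paper leaves implicit.
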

\begin{proof} For (i), we observe that the third term in \eqref{eq:bal_qsh} is always of lower depth an hence vanishes under $\grD$. The remaining is the recursive formula for the shuffle product $\shuffle$. (ii) follows from the observation that $\tau$ is depth homogeneous. 
\end{proof}

Lemma \ref{lem:depthgr_balqsh_tau} implies that \begin{align*}
\overline{\zeta_q}(v\shuffle w)&=0, \qquad v,w\in\QB\backslash\mathbb{Q},\\
\overline{\zeta_q}(\tau(w))&=\overline{\zeta_q}(w),\qquad w\in\QB^0.
\end{align*} 
The graded dual to the shuffle product is the coproduct $\shco:\QB\to \QB\otimes \QB$, which is on the letters given by
\begin{align} \label{eq:shco_B}
\shco(b_i)=b_i\otimes 1 +1 \otimes b_i, \quad i\geq0,
\end{align}
and extended with the concatenation product. Let $R$ be any $\QQ$-algebra and extend $\shco$ by to the algebra $\RB$ of power series. By construction, a $\QQ$-linear map $\varphi:\QB\to R$ vanishes on shuffle products, i.e., 
\begin{align} \label{eq:primitive_zero_on_prod}
\varphi(v\shuffle w)=0 \quad \text{ for all } v,w\in\QB\backslash \QQ,
\end{align} 
if and only if $\Phi=\sum\limits_{w\in \B^*} \varphi(w)w \in \RB$ is primitive for $\shco$, i.e., 
\[\shco \Phi=\Phi\otimes 1 +1 \otimes \Phi.\]
We apply this to the elements $\overline{\zeta_q}(w)$.
\begin{prop} \label{prop:Phi_zeta_primitive}
The series $\Phi_{\overline{\zeta_q}}$ is primitive for $\shco$,
\[\shco \Phi_{\overline{\zeta_q}}=\Phi_{\overline{\zeta_q}}\otimes 1 +1\otimes \Phi_{\overline{\zeta_q}}.\]
\end{prop}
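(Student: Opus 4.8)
The plan is to deduce the statement directly from the general primitivity criterion recorded just before the proposition: a $\QQ$-linear functional $\varphi \colon \QB \to R$ satisfies $\varphi(v \shuffle w) = 0$ for all $v, w \in \QB \backslash \QQ$ if and only if its generating series $\Phi = \sum_{w} \varphi(w) w$ is primitive for $\shco$. Taking $R = R_{\overline{\zeta_q}}$ and $\varphi = \overline{\zeta_q}$, so that $\Phi = \Phi_{\overline{\zeta_q}}$, the entire proof reduces to checking that $\overline{\zeta_q}$ annihilates every shuffle product of non-constant words.

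First I would establish this vanishing, which is precisely the first displayed consequence of Lemma \ref{lem:depthgr_balqsh_tau}. For $v, w \in \QB \backslash \QQ$ the map $\zeta_q^{\operatorname{reg}}$ is multiplicative for the balanced quasi-shuffle, so $\zeta_q^{\operatorname{reg}}(v \ast_b w) = \zeta_q^{\operatorname{reg}}(v)\,\zeta_q^{\operatorname{reg}}(w)$ lies in $\Z_{q,>0}^2$ and hence becomes zero in the quotient defining $R_{\overline{\zeta_q}}$. By Lemma \ref{lem:depthgr_balqsh_tau}(i) we have $\grD(v \ast_b w) = v \shuffle w$, and since $\overline{\zeta_q}$ is the associated depth-graded functional, the lower-depth correction terms in $v \ast_b w - v \shuffle w$ drop out. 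Combining the two observations yields $\overline{\zeta_q}(v \shuffle w) = 0$.

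It then remains to invoke the primitivity criterion, and for completeness I would spell out why it holds. Since $\shco$ is the graded dual of $\shuffle$ with respect to the pairing for which words form an orthonormal basis, the coefficient of $u \otimes v$ in $\shco \Phi$ equals $\varphi(u \shuffle v)$, while the coefficient of $u \otimes v$ in $\Phi \otimes 1 + 1 \otimes \Phi$ is $\varphi(u)$ when $v = 1$, is $\varphi(v)$ when $u = 1$, and vanishes otherwise. Matching coefficients over all pairs of words then forces exactly the vanishing of $\overline{\zeta_q}$ on non-constant shuffle products, the cases where one argument is empty reducing to $u \shuffle 1 = u$ and the constant term being handled by $\overline{\zeta_q}(1) = 0$.

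The main obstacle, such as it is, is bookkeeping rather than conceptual: one must be careful that $\overline{\zeta_q}$ is genuinely insensitive to the lower-depth terms generated by $\ast_b$, so that the passage from the multiplicativity of $\zeta_q^{\operatorname{reg}}$ (a statement about $\ast_b$) to the vanishing on $\shuffle$ is legitimate. This is precisely what working inside the associated depth-graded secures, since the third, depth-lowering term of \eqref{eq:bal_qsh} is killed by $\grD$, as recorded in Lemma \ref{lem:depthgr_balqsh_tau}(i).
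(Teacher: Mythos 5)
Your proposal is correct and follows exactly the route the paper intends: the paper states Proposition \ref{prop:Phi_zeta_primitive} without a separate proof precisely because it is the immediate combination of the consequence $\overline{\zeta_q}(v\shuffle w)=0$ of Lemma \ref{lem:depthgr_balqsh_tau}(i) (multiplicativity of $\zeta_q^{\operatorname{reg}}$ for $\ast_b$, killed in the quotient by $\Z_{q,>0}^2$, with the depth-lowering terms removed by $\grD$) with the duality criterion \eqref{eq:primitive_zero_on_prod}. Your additional spelling-out of the coefficient comparison, including the constant-term check $\overline{\zeta_q}(1)=0$, is a faithful expansion of what the paper leaves implicit.
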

By Lemma \ref{lem:depthgr_balqsh_tau}, the $\tau$-invariance does not change in the depth-graded case or modulo products. Let
\[\pizero:\QB\to\QB^0\]
be the canonical projection to $\QB^0$. For any $\QQ$-algebra $R$, extend $\pizero$ the algebra $\RB$ of power series, and similarly $\tau$ to the algebra $\RB^0$ of power series, which only have nonzero coefficients for words in $\QB^0$.
\begin{prop} \label{prop:Phi_zeta_tauinv}
We have
\[\tau(\pizero(\Phi_{\overline{\zeta_q}}))=\pizero(\Phi_{\overline{\zeta_q}}). \]
\end{prop}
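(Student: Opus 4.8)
The proof is essentially formal once the second consequence of Lemma \ref{lem:depthgr_balqsh_tau} is in hand, namely the scalar identity $\overline{\zeta_q}(\tau(w))=\overline{\zeta_q}(w)$ for all $w\in\QB^0$. The plan is to expand $\pizero(\Phi_{\overline{\zeta_q}})$ as an explicit sum over words in $\QB^0$, apply $\tau$ term by term, and then reindex the resulting series using that $\tau$ is an involution on the words of $\QB^0$.

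First I would record that $\pizero$ annihilates exactly the words ending in $b_0$, so that
\[\pizero(\Phi_{\overline{\zeta_q}})=\sum_{w\in\B^*}\overline{\zeta_q}(w)\,\pizero(w)=\sum_{\substack{w\in\B^* \\ w\in\QB^0}}\overline{\zeta_q}(w)\,w.\]
Since $\tau$ is $R$-linear and permutes the words of $\QB^0$, applying it to this series gives $\tau(\pizero(\Phi_{\overline{\zeta_q}}))=\sum_{w\in\QB^0}\overline{\zeta_q}(w)\,\tau(w)$. Substituting $u=\tau(w)$ and using that $\tau$ is an involution bijecting the words of $\QB^0$ with themselves, this becomes $\sum_{u\in\QB^0}\overline{\zeta_q}(\tau(u))\,u$. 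Invoking $\overline{\zeta_q}(\tau(u))=\overline{\zeta_q}(u)$ then yields $\sum_{u\in\QB^0}\overline{\zeta_q}(u)\,u=\pizero(\Phi_{\overline{\zeta_q}})$, which is the claim.

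The substantive content has already been absorbed into Lemma \ref{lem:depthgr_balqsh_tau}; the present statement merely repackages the scalar identity $\overline{\zeta_q}(\tau(w))=\overline{\zeta_q}(w)$ as an equality of formal power series. Accordingly, I do not expect a genuine obstacle here, and the only point requiring care is the passage to power series: one must confirm that the $R$-linear extension of $\tau$ to $\RB^0$ commutes with coefficient extraction, and that the reindexing $u=\tau(w)$ is legitimate at the level of formal series. Both follow immediately from the fact that $\tau$ is weight- and depth-homogeneous and restricts to a bijection on the basis of words in $\QB^0$, so that each word contributes to exactly one term of each series and the rearrangement is unambiguous.
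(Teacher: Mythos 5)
Your proof is correct and matches the paper's intent: the paper states Proposition \ref{prop:Phi_zeta_tauinv} without proof, treating it as an immediate consequence of the identity $\overline{\zeta_q}(\tau(w))=\overline{\zeta_q}(w)$ for $w\in\QB^0$ drawn from Lemma \ref{lem:depthgr_balqsh_tau}, which is exactly the scalar input you use. Your term-by-term expansion and reindexing via the involution $\tau$ (which permutes the word basis of $\QB^0$ and is weight- and depth-homogeneous, so extends to the power series setting) is precisely the routine verification the paper leaves implicit.
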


For any power series $\Phi\in\RB$ and any word $w\in\QB$, we denote by $(\Phi\mid w)$ the coefficient of $w$ in $\Phi$ and extend this $R$-linearly. Observe that $\zeta_q^{\operatorname{reg}}(b_0)=0$ implies $\overline{\zeta_q}(b_0)=0$, and hence
\begin{align} \label{eq:Phi_zeta_b0}
(\Phi_{\overline{\zeta_q}}\mid b_0) =0.
\end{align}
Moreover, we consider modulo quasi-modular forms, so $\overline{\zeta_q}(b_0^mb_k)=0$ for $k+m$ even, or equivalently,
\begin{align} \label{eq:Phi_zero_quasimod}
(\Phi_{\overline{\zeta_q}}\mid b_0^mb_k)=0, \quad k+m \text{ even, } k\geq 1,\ m\geq0.
\end{align}
Proposition \ref{prop:Phi_zeta_primitive}, \ref{prop:Phi_zeta_tauinv} together with \eqref{eq:Phi_zeta_b0}, \eqref{eq:Phi_zero_quasimod} motivate the definition of $\lqq$.
\begin{defi} \label{def:lq} Let $\lqq$ be the space of all non-commutative polynomials $\Phi\in \QB$, such that
\begin{flalign*}
\qquad \text{(i)} \quad & (\Phi\mid b_0)=0, &&\\
\qquad \text{(ii)} \quad & (\Phi\mid b_0^mb_k)=0, \qquad k+m \text{ even},\ k\geq1,\ m\geq0, &&\\
\qquad \text{(iii)} \quad & \shco\Phi=\Phi\otimes 1+1\otimes \Phi, &&\\
\qquad \text{(iv)} \quad & \tau\big(\pizero(\Phi)\big)=\pizero(\Phi).
\end{flalign*}
\end{defi}

Note that for simplicity we consider polynomials with coefficients in $\QQ$ instead of power series with coefficients in some $\QQ$-algebra $R$ in Definition \ref{def:lq}. By passing to the completion $\lqq\widehat{\otimes} R$ of $\lqq\otimes R$, we obtain the space of power series in $\RB$ satisfying the conditions (i)-(iv). In particular, we have
\begin{align} \label{eq:Phi_zeta_in_lq}
\Phi_{\overline{\zeta_q}}\in \lqq\ \! \widehat{\otimes}\ \!R_{\overline{\zeta_q}}.
\end{align}
By \cite[Theorem 1.4]{Re93}, condition (iii) in Definition \ref{def:lq} is equivalent to being a Lie polynomial. Thus,
\begin{align} \label{eq:lq_in_Lie}
\lqq\subset \LieB,
\end{align}
where $\LieB$ denotes the free Lie algebra on the set $\B$.
Moreover, the space $\lqq$ is bigraded with respect to the weight and depth
\[\lqq=\bigoplus_{k,d\geq0} \lqq_{k,d}.\]
A reformulation of \eqref{eq:Phi_zeta_in_lq} is the following.

\begin{prop} \label{prop:surj_lqdual_indec} We have a surjective $\QQ$-linear map
\begin{align*} 
\lqq^\vee \twoheadrightarrow \grD\Z_q/\big(\quasimod\cup \Z_{q,>0}^2\big).
\end{align*}
\end{prop}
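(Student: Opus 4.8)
The plan is to realize this surjection as a concrete duality statement built on the generating series $\Phi_{\overline{\zeta_q}}$. First I would recall that the weight-and-depth bigrading on $\lqq$ means that $\lqq^\vee$ is the restricted dual $\bigoplus_{k,d} \lqq_{k,d}^\vee$, and that a pairing between $\lqq^\vee$ and a quotient of $\grD\Z_q$ must be defined on each bigraded piece where all spaces are finite-dimensional. The target $\grD\big(\Z_q/(\quasimod\cup \Z_{q,>0}^2)\big)$ is, by definition, $R_{\overline{\zeta_q}}$, whose bigraded components are spanned by the classes $\overline{\zeta_q}(w)$ for words $w$.

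The key step is to use \eqref{eq:Phi_zeta_in_lq}, which states $\Phi_{\overline{\zeta_q}}\in \lqq\,\widehat{\otimes}\,R_{\overline{\zeta_q}}$. Unwinding this tensor-product membership, one sees that $\Phi_{\overline{\zeta_q}}=\sum_w \overline{\zeta_q}(w)\,w$ defines an element of $\lqq\,\widehat{\otimes}\,R_{\overline{\zeta_q}}$, and such an element is exactly the same data as a $\QQ$-linear map $\lqq^\vee \to R_{\overline{\zeta_q}}$: given a functional $\lambda\in\lqq^\vee$, apply $\lambda\otimes\id$ to $\Phi_{\overline{\zeta_q}}$ to obtain an element of $R_{\overline{\zeta_q}}$. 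Concretely, writing $\Phi_{\overline{\zeta_q}}=\sum_i \psi_i\otimes r_i$ with $\psi_i\in\lqq$ and $r_i\in R_{\overline{\zeta_q}}$, the map sends $\lambda\mapsto \sum_i \lambda(\psi_i)\,r_i$. I would then verify this is well-defined and $\QQ$-linear, working bigraded-piece by bigraded-piece so the finiteness is transparent.

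The main point is surjectivity. The image of this map is the $\QQ$-span of the coefficients $r_i$ appearing in $\Phi_{\overline{\zeta_q}}$, which is the span of all classes $\overline{\zeta_q}(w)$ as $w$ ranges over $\B^*$; by construction of $R_{\overline{\zeta_q}}=\grD\big(\Z_q/(\quasimod\cup \Z_{q,>0}^2)\big)$ and the fact that $\zeta_q^{\operatorname{reg}}$ is surjective onto $\Z_q$ (Theorem \ref{thm:bal_qmzv} together with the regularized extension), these classes span all of $R_{\overline{\zeta_q}}$. Hence the map is onto. I expect the main obstacle to be a clean formal argument that membership of $\Phi_{\overline{\zeta_q}}$ in the completed tensor product $\lqq\,\widehat{\otimes}\,R_{\overline{\zeta_q}}$ genuinely produces a \emph{well-defined} map out of the restricted dual $\lqq^\vee$ — i.e., that one can evaluate a functional in one tensor factor and land in $R_{\overline{\zeta_q}}$ without convergence issues. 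Since $\lqq$ is bigraded with finite-dimensional pieces, the completion only introduces infinite sums across distinct bigraded components, and a functional in $\lqq^\vee$ is supported on finitely many of these; so on each fixed bigraded component the evaluation is a finite sum and the issue dissolves. Once that bookkeeping is in place, surjectivity is immediate from the spanning property, and no deeper input beyond \eqref{eq:Phi_zeta_in_lq} and the surjectivity of $\zeta_q^{\operatorname{reg}}$ is needed.
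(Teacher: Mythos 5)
Your proposal is correct and matches the paper's intent exactly: the paper gives no separate proof but states the proposition as ``a reformulation of \eqref{eq:Phi_zeta_in_lq}'', i.e.\ precisely the dualization of the membership $\Phi_{\overline{\zeta_q}}\in \lqq\,\widehat{\otimes}\,R_{\overline{\zeta_q}}$ via $\lambda\mapsto(\lambda\otimes\id)(\Phi_{\overline{\zeta_q}})$, with surjectivity coming from the fact that the coefficients $\overline{\zeta_q}(w)$ span the target. Your bookkeeping about the graded dual and finite-dimensionality of the weight-homogeneous pieces (each letter contributes at least $1$ to the weight, so $\QB_k$ and hence $\lqq_k$ is finite-dimensional) is the right way to make the evaluation well-defined, and it fills in exactly what the paper leaves implicit.
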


The space $\lqq^\vee$ denotes the graded dual with respect to the weight. So $\lqq^\vee=\bigoplus_{k\geq0} \lqq_k^*$, where $\lqq_k^*$ is the usual dual of a vector space. 

\begin{rem} \label{rem:lq_grDZ_q_not_inj}
In \cite{Bu23}, there is an explicit description of a space $\bmzero$, which we expect to be a Lie algebra and to be equipped with an isomorphism
\begin{align*}
\bmzero^\vee\overset{?}{\simeq} \Z_q/\big(\cup \Z_{q,>0}^2\big).
\end{align*} 
Moreover, we have a natural embedding
\begin{align*} 
\grD\bmzero\hookrightarrow\lqq.
\end{align*}
This map is not an isomorphism. For example, one checks numerically that the element
\begin{align*}
&b_3b_0b_2b_0b_0-b_2b_0b_3b_0b_0+b_2b_0b_0b_3b_0-b_3b_0b_0b_2b_0-b_0b_0b_2b_3b_0+b_0b_0b_3b_2b_0+b_0b_2b_3b_0b_0 \\	&-b_0b_3b_2b_0b_0-b_0b_0b_3b_0b_2+b_0b_0b_2b_0b_3-b_0b_2b_0b_0b_3+b_0b_3b_0b_0b_2 \in\lqq
\end{align*}
is not the depth-graded part of an element in $\bmzero$. This implies, that also the map in Proposition \ref{prop:surj_lqdual_indec} cannot be an isomorphism. A more detailed explanation of the structure of $\grD\bmzero$ inside $\lqq$ in the language of bimoulds (cf Section \ref{sec:bimoulds}) is given in \cite{Kü19}.
\end{rem} 

\begin{thm} \label{thm:lq_parity} If $k\not\equiv d \mod 2$, then we have
\[\lqq_{k,d}=\{0\}.\]
\end{thm}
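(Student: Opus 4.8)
The plan is to encode the coefficients of $\Phi$ on $\QB^0$ as a generating polynomial and to read off the parity from a sign. For homogeneous $\Phi\in\lqq_{k,d}$ set
\[
F(X_1,Y_1,\dots,X_d,Y_d)=\sum_{\substack{k_1,\dots,k_d\geq1\\ m_1,\dots,m_d\geq0}}\big(\Phi\mid b_0^{m_1}b_{k_1}\cdots b_0^{m_d}b_{k_d}\big)\,X_1^{k_1-1}Y_1^{m_1}\cdots X_d^{k_d-1}Y_d^{m_d}.
\]
Every word occurring has weight $k$ and depth $d$, so each monomial has total degree $\sum_{i}(k_i-1+m_i)=\wt-\dep=k-d$; thus $F$ is homogeneous of degree $k-d$, and the total sign change $(X_i,Y_i)\mapsto(-X_i,-Y_i)$ multiplies $F$ by $(-1)^{k-d}$. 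The theorem is equivalent to $F=0$ whenever $k-d$ is odd, and then to $\Phi=0$: since $\Phi$ is a Lie element by \eqref{eq:lq_in_Lie}, the projection $\pizero$ is injective in depth $d\geq1$ (a nonzero Lie element of positive depth always carries a word ending in a nonzero letter), so $F=0$ forces $\Phi=0$.

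First I would translate the two structural conditions into symmetries of $F$. Condition (iv) is the clean one: unwinding $\tau(\pizero\Phi)=\pizero\Phi$ on coefficients gives $(\Phi\mid u)=(\Phi\mid\tau(u))$ for $u\in\QB^0$, and since $\tau$ sends the bi-index $(k_i,m_i)_i$ to $(m_{d+1-i}+1,k_{d+1-i}-1)_i$, this says exactly that $F$ is invariant under the substitution $(X_i,Y_i)\mapsto(Y_{d+1-i},X_{d+1-i})$, i.e. reverse the $d$ pairs and exchange $X\leftrightarrow Y$ inside each. Condition (iii) is more delicate. Being primitive for $\shco$, $\Phi$ is a Lie polynomial, so every shuffle relation $(\Phi\mid v\shuffle w)=0$ holds; in particular the shuffle antipode gives the letter-reversal relation $(\Phi\mid\overleftarrow u)=(-1)^{\ell(u)+1}(\Phi\mid u)$, where $\ell$ is the total letter-length. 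The crucial point is that primitivity is strictly stronger than alternality of the bimould $F$: letter reversal and the shuffle relations involving $b_0$ move words out of $\QB^0$, so to express the content of (iii) as a symmetry of $F$ one must first re-express the coefficients of words ending in $b_0$ through $\QB^0$-coefficients by shuffle regularization $\QB\cong(\QB^0,\shuffle)[b_0]$. Carrying out this regularization turns (iii) into a second reversal-type symmetry of $F$, now carrying a sign that depends on $k$ and $d$.

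The parity then drops out by comparing the two symmetries. Both are reflections of the pair-variables, so their composite is an order-preserving transformation which on the degree-$(k-d)$ part acts as the scalar $(-1)^{k-d}$; combining the swap-invariance of $F$ from (iv) with the reversal symmetry from (iii) therefore yields $F=(-1)^{k-d}F$, so $F=0$ once $k-d$ is odd, whence $\lqq_{k,d}=\{0\}$. In depth $d=1$ there is no shuffle relation and this argument degenerates, but there the statement is immediate from condition (ii): $k-d=\wt-1$ is odd precisely when the weight is even, and (ii) kills all such coefficients directly. I expect the main obstacle to be the regularization step of the second paragraph: one has to track how the trailing $b_0$'s recombine under shuffle regularization, pin down the resulting shift (``push'') of the $Y$-variables together with the exact sign, and verify that the composite of the two reflections is genuinely multiplication by $(-1)^{k-d}$ and not this scalar composed with a further nontrivial involution. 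This is the depth-graded analogue of the classical parity theorem for $\ls$ (\cite[Section 8]{IKZ06}) and, under the isomorphism $\lqq\cong\BARI$ of Section \ref{sec:bimoulds}, of the parity of alternal swap-invariant bimoulds in the sense of \cite{Sc15}.
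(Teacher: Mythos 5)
Your scaffolding is sound and, up to a change of variables, identical to the paper's setup: your $F$ is exactly the bimould $\bimap(\Phi)$ of Definition \ref{def:bimap} precomposed with the unipotent substitution $Y_i\mapsto Y_i-Y_{i+1}$ (with $Y_{d+1}=0$), which converts the $\Dbi$-coefficients into the $\B$-coefficients of $\pizero(\Phi)$ and is degree-preserving, hence harmless for parity; your condition-(iv) symmetry is the swap invariance of Proposition \ref{prop:bimap_vsiso} (the naive pair-exchange you write is the flexed swap conjugated by that substitution); your regularized reversal extracted from (iii) is precisely the paper's $S_0=\pizero\circ S\circ \sec$ from \eqref{eq:S_0}; and your treatment of $d=1$ via condition (ii) correctly mirrors the ``even depth $1$'' clause in the definition of $\BARI$. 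The paper's own proof consists of transporting $\Phi$ to $\BARI$ and quoting \cite[Lemma 2.5.5]{Sc15} via Propositions \ref{prop:bimap_vsiso} and \ref{prop:bari_parity}, so what you are attempting is a direct reproof of that lemma inside $\QB$ --- which the remark after Theorem \ref{thm:lq_parity} explicitly poses as an open exercise.

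The genuine gap is your decisive step: the composite of the two reflections does \emph{not} act as the scalar $(-1)^{k-d}$, and the paper itself computes what it actually is --- by \eqref{eq:rho_equal_S_tau}, $S_0\circ\tau\circ S_0\circ\tau=(-1)^{\wt+\dep}\rho$, where $\rho$ is the nontrivial push-type operator of Definition \ref{def:rho}. Consequently, conditions (iii) and (iv) alone yield only $\rho\big(\pizero(\Phi)\big)=(-1)^{k+d}\pizero(\Phi)$, which kills nothing. Your hedge anticipated the right failure mode, except the extra factor is not ``a further involution'': $\rho$ is essentially Ecalle's push operator, of order $d+1$ in depth $d$. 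The repair is to iterate the one-step composite $\theta$ (apply swap, then the signed reversal): the two symmetries give $F=(-1)^{d-1}F\circ\theta$, and for $d\geq2$ one has $\theta^{2(d+1)}=\operatorname{neg}$ (for $d=2$ one checks $\theta^{6}=\operatorname{neg}$ by direct computation), whence $F=(-1)^{2(d+1)(d-1)}F\circ\operatorname{neg}=(-1)^{k-d}F$; for $d=1$ the group generated by the two reflections does not contain $\operatorname{neg}$, which is exactly why (ii) must be invoked there. Establishing $\theta^{2(d+1)}=\operatorname{neg}$ --- equivalently, push-invariance together with $\operatorname{push}^{d+1}=\operatorname{id}$ --- is the entire nontrivial content of \cite[Lemma 2.5.5]{Sc15} that the paper cites rather than reproves; until you supply it, your argument lacks its engine, since the identity $F=(-1)^{k-d}F$ you assert simply does not follow from one application of the two symmetries.
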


\begin{proof} Let $\Phi\in \lqq_{k,d}$ be nonzero. In Section \ref{sec:bimoulds} we will introduce a bimould $\bimap(\Phi)$ corresponding to $\Phi$. By construction, the only non-trivial component of $\bimap(\Phi)$ is $\bimap(\Phi)_d$, which is a homogeneous polynomial of degree $k-d$. By Proposition \ref{prop:bimap_vsiso} and Proposition \ref{prop:bari_parity}, the degree $k-d$ must be even. This means, we have $k\equiv d \mod 2$.
\end{proof}

\begin{rem} The proof of Theorem \ref{thm:lq_parity} requires the theory of bimoulds so far. It would be interesting to find a direct proof inside $\QB$.
\end{rem}

\section{The Lie algebra structure on $\lqq$} \label{sec:lq_Lie_alg}

In this section, we give a proof that the space $\lqq$ is equipped with a bigraded Lie algebra structure. First, we introduce the Lie bracket, which comes from Ecalle's theory of bimoulds as discussed in Section \ref{sec:bimoulds}. The proof that this bracket endows $\lqq$ with a Lie algebra structure is motivated from the proof of \cite[Theorem 2.5.6]{Sc15}.

\begin{defi} \label{def:Lie_bracket_lq}
For any word $w=b_0^{m_1}b_{k_1}\dots b_0^{m_d}b_{k_d}b_0^{m_{d+1}}$ in $\QB$, we define the derivation $\der{w}$ on $\QB$ (with respect to concatenation) by 
\begin{align*}
\der{w}(b_0)&=0, \\
\der{w}(b_i)&=\sum_{\substack{l_1+\cdots+l_d+l=k_1+\cdots+k_d\\ l_1,\ldots,l_d\geq1,\ l\geq0}}(-1)^l\prod_{s=1}^d \binom{k_s-1}{l_s-1} \big[b_{i+l},b_0^{m_1}b_{l_1}\dots b_0^{m_d}b_{l_d}b_0^{m_{d+1}}\big], \qquad i\geq1.
\end{align*}
For $f,g\in\QB$, we set 
\begin{align} \label{eq:def_liebracket_lq}
\ari{f}{g}=\der{f}(g)-\der{g}(f)+[f,g].
\end{align}
\end{defi}

\begin{prop} \label{prop:ari_post_lie}
(\cite[Theorem 4.13]{BK25}) The tuple $(\LieB,[-,-],\der{})$ is a post-Lie algebra. Hence, $(\LieB,\ariemp)$ is a Lie algebra.
\end{prop}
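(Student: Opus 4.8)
The statement splits into verifying the two post-Lie axioms and then invoking the general passage from a post-Lie algebra to a Lie algebra. For the latter (the ``Hence''), I would recall that a \emph{post-Lie algebra} is a Lie algebra $(\mathfrak{g},[-,-])$ together with a bilinear product $\triangleright$ such that each $x\triangleright(-)$ is a derivation of $[-,-]$ and such that $[x,y]\triangleright z = x\triangleright(y\triangleright z)-(x\triangleright y)\triangleright z - y\triangleright(x\triangleright z)+(y\triangleright x)\triangleright z$; for any such structure the bracket $\{x,y\}=x\triangleright y - y\triangleright x +[x,y]$ is automatically a Lie bracket, antisymmetry being clear and the Jacobi identity following by a direct expansion that uses both axioms. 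Setting $f\triangleright g := \der{f}(g)$, the bracket $\ariemp$ of Definition \ref{def:Lie_bracket_lq} is exactly this deformed bracket, so the whole statement reduces to checking the two post-Lie axioms for $(\LieB,[-,-],\der{})$.

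For the first axiom I would observe that $\der{f}$ is by construction a derivation of the \emph{concatenation} product on $\QB$, hence automatically a derivation of the commutator $[-,-]$. The only point needing justification is that $\der{f}$ preserves $\LieB$: this holds because $\der{f}(b_i)\in\LieB$ for every generator (each value is a sum of brackets, and $\der{f}(b_0)=0$), and a concatenation-derivation sending all generators into $\LieB$ preserves $\shco$-primitivity, since $\shco\circ\der{f}$ and $(\der{f}\otimes\id+\id\otimes\der{f})\circ\shco$ are concatenation-derivations into $\QB\otimes\QB$ agreeing on generators. By the characterization of $\LieB$ as the $\shco$-primitives (cf. \eqref{eq:lq_in_Lie}) this means $\der{f}$ maps $\LieB$ to $\LieB$, settling the first axiom.

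The heart of the matter is the second axiom, which, since both sides are derivations of concatenation, is equivalent to the operator identity $\der{[f,g]}=\der{f}\der{g}-\der{g}\der{f}-\der{\der{f}(g)}+\der{\der{g}(f)}$ verified on the generators. On $b_0$ both sides vanish, so one is left with an explicit identity on $b_i$ for $i\ge1$ among the binomial-weighted sums $\sum(-1)^l\prod_s\binom{k_s-1}{l_s-1}$ defining $\der{w}(b_i)$. I would organize this by encoding $\der{w}(b_i)$ through generating series in auxiliary variables tracking the index shift $b_i\mapsto b_{i+l}$ and the parts $l_s$; the products of binomials and the sign $(-1)^l$ then become products of elementary rational series, and the recombination of the blocks $b_0^{m_s}b_{k_s}$ under composition of two such derivations becomes a transparent convolution. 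This is precisely the passage to the bimould picture of Section \ref{sec:bimoulds}, in which $\der{}$ becomes the linear part of Ecalle's $\operatorname{arit}$ operator and the desired identity is the Leibniz-type compatibility established in the proof of \cite[Theorem 2.5.6]{Sc15}.

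The main obstacle is exactly this second axiom: tracking how the nested action $\der{f}\der{g}$ matches the correction terms $\der{\der{f}(g)}$ and $\der{\der{g}(f)}$ once the letter-blocks $b_0^{m_s}b_{k_s}$ have recombined under concatenation. All the combinatorial difficulty concentrates in the bookkeeping of these binomial convolutions across the shift operators, and the generating-function reformulation---equivalently, the flexion-algebra formalism of Ecalle and Schneps---is what makes the identity manageable; alternatively one simply invokes \cite[Theorem 4.13]{BK25} directly.
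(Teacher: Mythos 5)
The paper offers no proof of this proposition at all: it is imported wholesale from \cite[Theorem 4.13]{BK25}, so your closing fallback (``invoke \cite{BK25} directly'') coincides with what the paper does, and your skeleton is the right one as far as it goes --- the passage from post-Lie to Lie via $\ari{f}{g}=\der{f}(g)-\der{g}(f)+[f,g]$ is standard, and reducing the second axiom to the operator identity $\der{[f,g]}=\der{f}\der{g}-\der{g}\der{f}-\der{\der{f}(g)}+\der{\der{g}(f)}$ checked on generators is legitimate, since both sides are concatenation derivations. However, your argument for the first axiom contains a genuine error. You assert that $\der{f}(b_i)\in\LieB$ ``because each value is a sum of brackets''. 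The brackets in Definition \ref{def:Lie_bracket_lq} are commutators $[b_{i+l},u]$ of a letter with a \emph{word} $u$, and such commutators are in general not Lie elements: for instance $[b_1,b_2b_3]=b_1b_2b_3-b_2b_3b_1$ pairs to $1$ against the shuffle $b_2\shuffle b_1b_3$, so it is not primitive (cf.\ \eqref{eq:primitive_zero_on_prod}). Concretely, for the single word $f=b_2b_3$ the element $\der{f}(b_1)$ has $-[b_4,b_1b_1]$ as its entire component in the letter-multiset $\{b_4,b_1,b_1\}$, and $\big([b_4,b_1b_1]\mid b_1\shuffle b_1b_4\big)=-2\neq0$; since $\shco$ respects the multigrading by letters, $\der{b_2b_3}(b_1)\notin\LieB$. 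So $\der{f}$ does \emph{not} send generators into $\LieB$ for arbitrary $f$, and your transfer lemma (two derivations into $\QB\otimes\QB$ agreeing on generators), though itself sound, is fed a false hypothesis.

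What is actually true --- and what makes the proposition nontrivial --- is that $\der{f}(b_i)$ becomes primitive only after the cancellations forced by $f$ itself being a Lie element: e.g.\ one computes $\der{[b_2,b_3]}(b_1)=[b_1,[b_2,b_3]]+[b_3,[b_1,b_2]]-[b_2,[b_1,b_3]]$, where the non-Lie components coming from the two words $b_2b_3$ and $b_3b_2$ cancel pairwise thanks to the specific binomial structure of $\der{}$. Establishing this closure $\der{f}(\LieB)\subseteq\LieB$ for $f\in\LieB$ --- in the bimould picture, the statement that the ari/arit operations preserve alternality, \cite[Proposition 2.5.2]{Sc15} --- is a substantive part of \cite[Theorem 4.13]{BK25}, on a par with the second axiom that you rightly identify as the combinatorial heart. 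Deferring that second axiom to generating-series or mould arguments (or to \cite{BK25}) is acceptable given that the paper itself only cites; but the closure step cannot be waved through as a generator-by-generator formality. Note also that the paper's companion remark that the larger tuple $(\QB,[-,-],\der{})$ is post-Lie avoids the closure issue only because the ambient space there is all of $\QB$; for $\LieB$, closure is precisely the point that your sketch gets wrong.
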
 

In particular, $\ariemp$ indeed satisfies anti-symmetry and the Jacobi identity. This means, also the larger space $(\QB,[-,-],\der{})$ forms a post-Lie algebra (cf.  \cite[Key Lemma 3.18]{Bu23}). As part (i) of Theorem \ref{main_thm}, we have the following.

\begin{thm} \label{thm:lqq_Lie_alg} The pair $(\lqq,\ariemp)$ is a bigraded Lie algebra.
\end{thm}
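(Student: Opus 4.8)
The plan is to show that $(\lqq, \ariemp)$ is a Lie subalgebra of $(\LieB, \ariemp)$, which is already known to be a Lie algebra by Proposition \ref{prop:ari_post_lie}. Since $\lqq \subset \LieB$ by \eqref{eq:lq_in_Lie}, the anti-symmetry and Jacobi identity are inherited for free; the entire content of the theorem is therefore \emph{closure}: if $f, g \in \lqq$, then $\ari{f}{g} \in \lqq$. Likewise, the bigrading follows once closure is established, since the bracket is visibly homogeneous for both weight and depth (the derivation $\der{w}$ raises depth by $\dep(w)$ and preserves weight, so $\ari{f}{g}$ lands in the expected bidegree). Thus I would reduce everything to verifying that $\ari{f}{g}$ satisfies conditions (i)--(iv) of Definition \ref{def:lq}.

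I would check the four conditions in increasing order of difficulty. Conditions (i) and (ii) are low-depth/coefficient conditions: since $f, g$ already vanish on $b_0$ and on the relevant even-weight depth-one words, and since $\ari{f}{g}$ is a sum of a Lie bracket $[f,g]$ (which has no depth-one part) together with derivation terms $\der{f}(g), \der{g}(f)$ whose depth-one behavior is controlled by the explicit binomial formula in Definition \ref{def:Lie_bracket_lq}, I expect these to follow from a direct inspection of the lowest-depth component. Condition (iii), primitivity for $\shco$, should reduce to the statement that the derivations $\der{w}$ act compatibly with the shuffle coproduct — i.e., that $\der{w}$ is a coderivation for $\shco$, or equivalently that it preserves the space of Lie polynomials. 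This is essentially the assertion, already implicit in $\lqq \subset \LieB$ and in Proposition \ref{prop:ari_post_lie}, that the post-Lie structure lives inside $\LieB$; so primitivity of $\ari{f}{g}$ follows from primitivity of $f$ and $g$ once one knows $\ariemp$ preserves Lie polynomials, which it does.

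The main obstacle will be condition (iv), the $\tau$-invariance of $\pizero(\ari{f}{g})$. This is the genuinely delicate point because $\tau$ is defined only on $\QB^0$ and interacts nontrivially with both the concatenation and the derivation structure, while $\pizero$ (projection onto $\QB^0$) does not commute with the bracket in any obvious way. The strategy I would follow is the one flagged in the text: transport the problem to the bimould side. By the isomorphism $(\lqq, \ariemp) \xrightarrow{\sim} (\BARI, \operatorname{ari})$ promised in Section \ref{sec:bimoulds}, the $\tau$-invariance of $\Phi$ corresponds to the swap-invariance of the associated bimould, and the ari bracket is \emph{known} to preserve swap-invariance by Schneps \cite{Sc15}. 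Concretely, I would establish that the map $\bimap$ intertwines $\ariemp$ with $\operatorname{ari}$ and sends the $\tau$-invariance condition to swap-invariance; then closure of $\lqq$ under $\ariemp$ reduces to Schneps's theorem that $\BARI$ is closed under $\operatorname{ari}$. This is precisely why the proof is said to be "motivated from the proof of \cite[Theorem 2.5.6]{Sc15}": the alternality condition matches the primitivity (iii), and the swap-invariance matches (iv), so the whole statement becomes a transcription of the bimould computation, with the hard swap-invariance step handled by the established mould-theoretic machinery rather than by a direct manipulation inside $\QB$.
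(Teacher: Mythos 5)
Your proposal is correct, and for the crucial condition (iv) it takes a genuinely different route from the paper's own proof. The paper stays inside $\QB$: it first shows that $\pizero(\Phi)$ is $\rho$-invariant for $\Phi\in\lqq$ (Proposition \ref{prop:lq_rho_inv}, via the antipode $S_0$ and the parity theorem), then establishes the compatibility of $\tau$ with the split derivations $\derrzero{w},\derlzero{w}$ (Proposition \ref{prop:dA0_and_tau}, proved by binomial identities in Appendix \ref{ap:proof_prop_dA0_and_tau}), and concludes via Corollary \ref{cor:Lie_brack_tau_inv} --- i.e., it \emph{transcribes} the structure of Schneps's argument into $\QB$ rather than citing it, which is what ``motivated from the proof of \cite[Theorem 2.5.6]{Sc15}'' means there. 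Your route --- show that $\bimap$ intertwines $\ariemp$ with $\operatorname{ari}$ and matches the defining conditions of $\lqq$ with alternality, evenness in depth $1$, and swap invariance, then quote closure of $\BARI$ under $\operatorname{ari}$ --- is exactly the alternative proof the paper itself records after Theorem \ref{thm:lq_BARI_Lieiso}, and it is non-circular: the intertwining $\bimap\big(\der{f}(g)\big)=\operatorname{arit}_{\bimap(f)}\big(\bimap(g)\big)$ is proved for all $f,g\in\QDbi$ without assuming the theorem, $\ari{f}{g}\in\QDbi$, and Proposition \ref{prop:bimap_vsiso} characterizes $\lqq$ as the $\bimap$-preimage of $\BARI$. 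As for what each approach buys: yours is shorter for this theorem and outsources the hard swap-invariance combinatorics to \cite{Sc15}, whereas the paper's $\QB$-internal machinery ($\rho$, $S_0$, $\conczero$, Proposition \ref{prop:dA0_and_tau}, Corollary \ref{cor:Lie_brack_tau_inv}) is reused in an essential way later, in the proofs of Propositions \ref{prop:thetaY_Lie_bracks} and \ref{prop:thetaXY_ari} for the embedding $\ls\hookrightarrow\lqq$; note also that the paper's ``direct'' proof is not actually bimould-free, since Proposition \ref{prop:lq_rho_inv} invokes Theorem \ref{thm:lq_parity}, whose only given proof goes through bimoulds, so both routes share that dependency.

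Two minor corrections to your write-up. For conditions (i)--(ii) no inspection of the binomial formula is needed: by (i) and (iii), every element of $\lqq$ lies in $\LieDbi$ and hence has only components of depth $\geq1$, so $\ari{f}{g}$ has only terms of depth $\geq2$ and the depth-$\leq1$ coefficient conditions hold vacuously --- this is the paper's one-line argument. And $\der{w}$ does not preserve weight; it raises weight by $\wt(w)$ and depth by $\dep(w)$, which is precisely why $\ariemp$ is bihomogeneous and the bigrading descends.
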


\begin{proof} 
It follows immediately from Definition \ref{def:Lie_bracket_lq} that the Lie bracket $\ariemp$ is homogeneous for the weight and depth. In particular, any Lie bracket $\ari{f}{g}$ with $f,g\in \QB$ of depth $\geq1$ has only terms of depth $\geq 2$. Since the conditions (i) and (ii) in the Definition \ref{def:lq} of $\lqq$ only involve terms of depth $1$, those are preserved by the Lie bracket $\ariemp$. 
	
Next, let $\Phi\in \QB$ satisfy condition (iii), so $\shco\Phi=\Phi\otimes 1+1\otimes\Phi$. This is equivalent to $\Phi\in \LieB$ by \eqref{eq:lq_in_Lie}. This condition is evidently preserved by the Lie bracket $\ariemp$, this follows directly from Definition \ref{def:lq} or Proposition \ref{prop:ari_post_lie}.

It remains to show that the Lie bracket $\ariemp$ preserves condition (iv) in Definition \ref{def:lq}. Proposition \ref{prop:lq_rho_inv} below shows that for any $\Phi\in \lqq$ the projection $\pizero(\Phi)$ is $\rho$-invariant. Hence, we can apply Corollary \ref{cor:Lie_brack_tau_inv} to $\Phi,\Psi\in \lqq$, and get
\[\tau\big(\arizero{\tau(\pizero(\Phi))}{\tau(\pizero(\psi))}\big)=\arizero{\pizero(\Phi)}{\pizero(\Psi)}.\]
We have $\tau(\pizero(\Phi))=\pizero(\Phi)$ and $\tau(\pizero(\Psi))=\pizero(\Psi)$, hence we deduce with \eqref{eq:Lie_brack_QB0} that
\[\tau\big(\pizero\big(\ari{\Phi}{\Psi}\big)\big)=\pizero\big(\ari{\Phi}{\Psi}\big).\qedhere\]
\end{proof}

\begin{rem} Conditions (ii) in the Definition \ref{def:lq} of $\lqq$ is necessary for $\ariemp$ preserving $\lqq$. So in other words, we have to consider $\Z_q$ modulo quasi-modular forms in \eqref{eq:quotient_space}. This could be an analogue of the fact that the Goncharov coproduct is expected to be only well-defined on the quotient of $\Z$  by $\zeta(2)$. 
\end{rem}

As $\lqq$ is a Lie algebra, we get the following refinement of Proposition \ref{prop:surj_lqdual_indec}.

\begin{thm} \label{thm:qmzv_depthgraded_surj} We have a surjective morphism of algebras
\[\mathcal{U}(\lqq)^\vee\twoheadrightarrow \grDZ_q/\quasimod.\]
\end{thm}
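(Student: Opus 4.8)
Set $A:=\grDZ_q/\quasimod$. This is a graded-connected commutative $\QQ$-algebra (commutative since $\Z_q$ is, connected since $A_0=\QQ$), whose indecomposable quotient $A_{>0}/A_{>0}^2$ is precisely $\grD\Z_q/(\quasimod\cup\Z_{q,>0}^2)$, the target of Proposition~\ref{prop:surj_lqdual_indec}. On the other side, since $(\lqq,\ariemp)$ is a graded Lie algebra with finite-dimensional weight-components (Theorem~\ref{thm:lqq_Lie_alg}), its enveloping algebra $\mathcal{U}(\lqq)$ is a graded-connected cocommutative Hopf algebra with space of primitives $\lqq$, so the graded dual $\mathcal{U}(\lqq)^\vee$ is a graded-connected commutative Hopf algebra whose space of indecomposables is $\lqq^\vee$. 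The plan is to produce a canonical algebra morphism $f\colon\mathcal{U}(\lqq)^\vee\to A$, to identify its restriction to indecomposables with the surjection of Proposition~\ref{prop:surj_lqdual_indec}, and then to deduce surjectivity of $f$ by a graded Nakayama argument.

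To build $f$ I use that algebra morphisms $\mathcal{U}(\lqq)^\vee\to A$ into a commutative algebra correspond, via the pro-unipotent group $\operatorname{Spec}\mathcal{U}(\lqq)^\vee$ with Lie algebra $\lqq$, to group-like elements of $\mathcal{U}(\lqq)\widehat\otimes A$, that is, to elements $\Psi\in\lqq\widehat\otimes A$. The natural such $\Psi$ comes from balanced multiple $q$-zeta values: let $z\colon\QB\to A$, $w\mapsto[\zeta_q^{\operatorname{reg}}(w)]$, be the composite of $\zeta_q^{\operatorname{reg}}$ with the projection $\Z_q\to A$. By Lemma~\ref{lem:depthgr_balqsh_tau}(i) the depth-graded of $\ast_b$ is the shuffle product, so $z$ is an algebra morphism $(\QB,\shuffle)\to A$, and hence its generating series $\widetilde\Phi=\sum_{w}z(w)\,w\in\QB\widehat\otimes A$ is group-like for $\shco$. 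I then set $\Psi:=\log\widetilde\Phi$, which is primitive for $\shco$, i.e.\ a Lie series; this is exactly condition (iii) of Definition~\ref{def:lq}, so (iii) holds automatically. It remains to verify $\Psi\in\lqq\widehat\otimes A$.

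For conditions (i) and (ii) I first observe that $\zeta_q^{\operatorname{reg}}(b_0^a)=0$ for all $a\ge1$, which follows from $\zeta_q^{\operatorname{reg}}(b_0)=0$ together with the identity $b_0\ast_b b_0^{a-1}=a\,b_0^a$. Thus $\widetilde\Phi$ has no terms of positive weight and depth $0$, and since concatenation is additive in the depth, the coefficients of $\Psi=\log\widetilde\Phi$ at $b_0$ and at the depth-$1$ words $b_0^mb_k$ coincide with those of $\widetilde\Phi$; the latter vanish by \eqref{eq:Phi_zero_quasimod}, the relevant vanishing being a congruence modulo quasi-modular forms and hence already valid in $A$. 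Condition (iv), the $\tau$-invariance $\tau(\pizero(\Psi))=\pizero(\Psi)$, is the main obstacle. One checks directly from the $\tau$-invariance of $\zeta_q$ (Theorem~\ref{thm:bal_qmzv}) and Lemma~\ref{lem:depthgr_balqsh_tau}(ii) that $\pizero(\widetilde\Phi)$ is $\tau$-invariant in $A$; but $\pizero$ does not commute with the logarithm, so this invariance must be transported through $\log$. I expect this to be handled by the same swap/$\rho$-invariance techniques developed for the Lie bracket in Section~\ref{sec:lq_Lie_alg_proof}, in particular Proposition~\ref{prop:lq_rho_inv} and Corollary~\ref{cor:Lie_brack_tau_inv}.

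Granting $\Psi\in\lqq\widehat\otimes A$, the associated group-like element defines the algebra morphism $f\colon\mathcal{U}(\lqq)^\vee\to A$; no Hopf structure on $A$ is needed, as this is automatic from the group-like correspondence. Modulo products one has $\log\widetilde\Phi\equiv\widetilde\Phi-1$, so the map induced by $f$ on indecomposables $\lqq^\vee\to A_{>0}/A_{>0}^2$ is exactly the surjection of Proposition~\ref{prop:surj_lqdual_indec} (under the identification \eqref{eq:Phi_zeta_in_lq}). Finally I invoke the graded Nakayama principle: a morphism of graded-connected $\QQ$-algebras that is surjective on indecomposables is surjective, by induction on the weight, since any element of $A_{>0}^2$ is a sum of products of lower-weight elements, which lie in the image by the inductive hypothesis and the fact that $f$ is multiplicative. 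Hence $f$ is surjective, and it is the asserted morphism \eqref{eq:iso_grZq_Ulq}.
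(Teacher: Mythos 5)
Your construction of $z$, $\widetilde\Phi$, and $\Psi=\log\widetilde\Phi$, and your verification of conditions (i)--(iii) of Definition~\ref{def:lq}, are sound (with the caveat that ``the projection $\Z_q\to A$'' for your $A=\grDZ_q/\quasimod$ must be read degreewise: $z(w)$ is the class of $\zeta_q^{\operatorname{reg}}(w)$ in $\fil_{\dep(w)}\Z_q/\fil_{\dep(w)-1}\Z_q$ modulo $\quasimod$, and only with this reading does Lemma~\ref{lem:depthgr_balqsh_tau}(i) make $z$ multiplicative for $\shuffle$). But the step you defer, condition (iv) for $\Psi$, is a genuine gap, not a routine verification. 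Coefficientwise you know $z(\tau u)=z(u)$ for words $u$ in $\QB^0$, while what you need is $\psi(\tau u)=\psi(u)$ for $\psi(u)=\sum_{n\geq1}\tfrac{(-1)^{n-1}}{n}\sum_{u=u_1\cdots u_n} z(u_1)\cdots z(u_n)$: the correction terms run over deconcatenations of $u$, the involution $\tau$ does not commute with deconcatenation, and $z$ does not kill products in $A$ --- that was precisely the point of your passage from $R_{\overline{\zeta_q}}$ to $A$. So the desired identity encodes nontrivial relations in $\grDZ_q/\quasimod$ that you have not established. The tools you point to cannot close it: Proposition~\ref{prop:lq_rho_inv} and Corollary~\ref{cor:Lie_brack_tau_inv} are statements about the bracket $\arizero{-}{-}$, not about $\exp/\log$, and Proposition~\ref{prop:lq_rho_inv} moreover assumes membership in $\lqq$ (via the parity Theorem~\ref{thm:lq_parity}), which is exactly what you are trying to prove for $\Psi$, so invoking it here is circular. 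Transporting a $\tau$-type invariance from a group-like series to its primitive logarithm is, in the parallel double shuffle setting, a statement of Racinet-type difficulty, not a corollary of bracket-level lemmas.

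The gap is also avoidable, and here your route diverges from the paper's. The paper never builds a group-like element over $A$: it works from the outset with coefficients in $R_{\overline{\zeta_q}}$, where products already vanish, so $\Phi_{\overline{\zeta_q}}$ is itself primitive and condition (iv) holds verbatim because $\tau$ is weight- and depth-homogeneous (Propositions~\ref{prop:Phi_zeta_primitive} and \ref{prop:Phi_zeta_tauinv}, giving \eqref{eq:Phi_zeta_in_lq} and Proposition~\ref{prop:surj_lqdual_indec}); Theorem~\ref{thm:qmzv_depthgraded_surj} is then the purely formal refinement made possible by Theorem~\ref{thm:lqq_Lie_alg}: in characteristic zero, PBW gives $\mathcal{U}(\lqq)^\vee\simeq S(\lqq^\vee)$ as graded algebras, so one chooses a graded linear lift to $A_{>0}$ of the surjection $\lqq^\vee\twoheadrightarrow A_{>0}/A_{>0}^2$ of Proposition~\ref{prop:surj_lqdual_indec}, extends multiplicatively, and concludes surjectivity by exactly the graded Nakayama argument you give at the end. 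If you replace your group-like construction by this lift-and-extend argument (at the price of a non-canonical morphism, which the statement does not require), your proof closes; as written, condition (iv) remains an open hole.
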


A coproduct on $\mathcal{U}(\lqq)^\vee$ is explicitly computed in \cite[Theorem 4.25]{BK25}, we expect this coproduct to descend to $\grDZ_q/\quasimod$. Remark \ref{rem:lq_grDZ_q_not_inj} implies that the morphism in Theorem \ref{thm:qmzv_depthgraded_surj} is not an isomorphism.

\section{Proof of Theorem \ref{thm:lqq_Lie_alg}} \label{sec:lq_Lie_alg_proof}

In this section, we prove the results applied in the proof of Theorem \ref{thm:lqq_Lie_alg}.
We will need an inverse of the projection $\pizero:\QB\to \QB^0$, in particular, as the involution $\tau$ is only defined on the subspace $\QB^0$.

\begin{defi} \label{def:sec} Let $\sec:\QB^0\to \QB$ be the $\QQ$-linear map given by
\[\sec(b_0^{m_1}b_{k_1}\cdots b_0^{m_d}b_{k_d})=\ad(b_0)^{m_1}\Big(b_{k_1}\cdot \ad(b_0)^{m_2}\Big(b_{k_2}\cdots \ad(b_0)^{m_d}(b_{k_d})\cdots\Big)\Big),\]
where $\ad(b_0):\QB\to \QB, w\mapsto[b_0,w]$ is meant with respect to the usual commutator.
\end{defi}

Explicitly, we have for $k_1,\ldots,k_d\geq1$, $m_1,\ldots,m_d\geq0$
\begin{align} \label{eq:sec_expl}
\sec(b_0^{m_1}b_{k_1}\cdots b_0^{m_d}b_{k_d})=\sum_{\substack{n_1+\cdots+n_d+n=m_1+\cdots+m_d \\ n_1,\ldots,n_d,n\geq0}} (-1)^n \prod_{s=1}^d \binom{m_s}{n_s} b_0^{n_1}b_{k_1}\cdots b_0^{n_d}b_{k_d}b_0^n.
\end{align}

Let $\partial_0:\QB\to \QB$ be the derivation with respect to concatenation given by 
\begin{align} \label{eq:def_d0}
\partial_0(b_0)=1,\qquad \partial_0(b_i)=0,\ i\geq1.
\end{align}
We abbreviate 
\begin{align} \label{eq:def_Dbi}
D_{k,m}=\ad(b_0)^m(b_k),\quad k\geq1,\ m\geq0,
\end{align}
and write $\Dbi=\{D_{k,m}\mid k\geq1,m\geq0\}$. By \eqref{eq:lq_in_Lie} and condition (i) in Definition \ref{def:lq}, we have 
\begin{align} \label{eq:lq_in_QDbi}
\lqq\subset \QDbi.
\end{align} 

\begin{prop} \label{prop:pi0_sec_inverse} We have
\begin{enumerate}[(i)] \setlength\itemsep{0.4em}
\item $\sec(w) = \sum\limits_{n\geq0} \frac{(-1)^n}{n!} \partial_0^n(w)b_0^n,\quad$ $w\in \QB^0$,
\item $\pizero\circ \sec=\id$,
\item $\sec\circ \pizero=\id$ on the subspace $\QDbi\subset \QB$.
\end{enumerate}
\end{prop}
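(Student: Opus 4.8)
The plan is to take the explicit expansion \eqref{eq:sec_expl} of $\sec$ as given, dispatch (i) and (ii) by direct computation, and reduce the only substantive claim, part (iii), to the linear independence of the words built from $\Dbi$. For (i), I would fix a word $w=b_0^{m_1}b_{k_1}\cdots b_0^{m_d}b_{k_d}\in\QB^0$ and expand $\partial_0^n(w)$ by the generalized Leibniz rule. Since $\partial_0$ annihilates each $b_{k_i}$ and satisfies $\partial_0^{a}(b_0^{m})=\tfrac{m!}{(m-a)!}b_0^{m-a}$, only the $b_0$-blocks are differentiated, giving
\[
\partial_0^n(w)=\sum_{n_1'+\cdots+n_d'=n}\binom{n}{n_1',\ldots,n_d'}\prod_{s=1}^{d}\frac{m_s!}{(m_s-n_s')!}\; b_0^{m_1-n_1'}b_{k_1}\cdots b_0^{m_d-n_d'}b_{k_d}.
\]
Multiplying by $\tfrac{(-1)^n}{n!}$ on the left and $b_0^n$ on the right collapses the multinomial and the factorials into $\prod_s\binom{m_s}{n_s'}$, and the substitution $n_s=m_s-n_s'$ turns each summand into a term of \eqref{eq:sec_expl} with trailing block $b_0^n$ and constraint $n_1+\cdots+n_d+n=m_1+\cdots+m_d$. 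Summing over $n\ge0$ yields (i); this is routine bookkeeping.

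Part (ii) is then immediate from \eqref{eq:sec_expl}: every summand with $n>0$ ends in $b_0$ and is killed by $\pizero$, while the term $n=0$ forces $n_s=m_s$ for all $s$ and contributes $w$ with coefficient $1$, so $\pizero(\sec(w))=w$. For (iii) I would argue structurally. First, $\sec$ maps into $\QDbi$: since $\ad(b_0)$ is a derivation preserving the subalgebra generated by $\Dbi$ and $D_{k,m}=\ad(b_0)^m(b_k)\in\Dbi$, the nested expression defining $\sec(w)$ stays in $\QDbi$. Decomposing $\QB=\QB^0\oplus\QB b_0$ according to whether a word ends in $b_0$, we have $\ker\pizero=\QB b_0$. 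By (ii), $\sec(w)-w\in\ker\pizero=\QB b_0$ for $w\in\QB^0$, so $w\in\QDbi+\QB b_0$; as words ending in $b_0$ already lie in $\QB b_0$, this proves the spanning statement $\QB=\QDbi+\QB b_0$. To promote this to a direct sum it suffices to match graded dimensions in each bidegree, i.e. $\dim\QDbi_{k,d}=\dim\QB^0_{k,d}$. Both spaces are indexed by the same data $(k_i,m_i)_{i=1}^{d}$ with $k_i\ge1$, $m_i\ge0$, $\sum_i(k_i+m_i)=k$, so this equality holds as soon as the monomials $D_{k_1,m_1}\cdots D_{k_d,m_d}$ are linearly independent. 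Granting the resulting direct sum $\QB=\QDbi\oplus\QB b_0$, the map $\pizero|_{\QDbi}\colon\QDbi\to\QB^0$ is an isomorphism (the projection restricted to a complement of its kernel), and since $\pizero\circ\sec=\id$ on $\QB^0$ by (ii), we conclude $\sec=(\pizero|_{\QDbi})^{-1}$, hence $\sec\circ\pizero=\id$ on $\QDbi$.

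The main obstacle is thus the linear independence of the words $D_{k_1,m_1}\cdots D_{k_d,m_d}$, which I would prove by a leading-term argument. Each factor $D_{k,m}=\sum_{j=0}^{m}(-1)^j\binom{m}{j}b_0^{m-j}b_k b_0^j$ contributes exactly one non-$b_0$ letter $b_k$, so such a product is supported on words $b_0^{a_1}b_{k_1}\cdots b_0^{a_d}b_{k_d}b_0^{a_{d+1}}$ with the nonzero-letter sequence $(k_1,\ldots,k_d)$ fixed; in particular products with distinct sequences have disjoint support and may be treated separately. For a fixed sequence, ordering these monomials lexicographically by $(a_{d+1},a_d,\ldots,a_1)$, I expect the unique maximal monomial of $D_{k_1,m_1}\cdots D_{k_d,m_d}$ to be $b_{k_1}b_0^{m_1}\cdots b_{k_d}b_0^{m_d}$ (all $b_0$-blocks pushed past their letters to the right), occurring with nonzero coefficient $(-1)^{m_1+\cdots+m_d}$. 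Since $(m_1,\ldots,m_d)$ is recovered from this leading word, distinct exponent tuples yield distinct leading monomials, and the standard triangularity argument gives independence. Verifying that this maximizer is attained uniquely across the multi-indexed sum is the only delicate point; the remaining steps are formal.
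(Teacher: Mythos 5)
Your parts (i) and (ii) are exactly the paper's argument (Leibniz expansion of $\partial_0^n$ matched against \eqref{eq:sec_expl}, and the observation that only the $n=0$ term survives $\pizero$), but for (iii) you take a genuinely different route. The paper never decomposes $\QB$: it computes directly that $\partial_0\big(\ad(b_0)^m(b_k)\big)=0$, so $\QDbi\subseteq\ker(\partial_0)$, and then shows that any $w\in\ker(\partial_0)$, written as $w=\sum_{n\geq0}w_nb_0^n$ with $w_n\in\QB^0$, satisfies the recursion $\partial_0(w_{n-1})=-nw_n$, whence $w_n=\frac{(-1)^n}{n!}\partial_0^n(w_0)$ and $w=\sec(\pizero(w))$ by part (i). That argument is shorter, requires no linear-independence input, and as a byproduct identifies $\sec(\QB^0)$ as exactly $\ker(\partial_0)$. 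Your route instead establishes the stronger structural statement $\QB=\QDbi\oplus\QB b_0$, which amounts to the linear independence of the monomials $D_{k_1,m_1}\cdots D_{k_d,m_d}$, i.e.\ that $\QDbi$ is free associative on $\Dbi$ --- a fact the paper never needs here but which is consonant with the Lazard elimination $\LieB=\LieDbi\oplus\QQ b_0$ invoked in Section \ref{sec:deriv_on_lq}, and which could in fact replace your triangularity argument (freeness of $\Dbi$ as Lie generators plus Poincar\'e--Birkhoff--Witt gives the independence). Your one flagged delicate point does go through: reading the lex order from $a_{d+1}$ downward forces $j_d=m_d$, then $j_{d-1}=m_{d-1}$, and so on, so the maximizer is unique with coefficient $(-1)^{m_1+\cdots+m_d}$; moreover, if the leading word of the tuple $(m_s)$ occurs in the expansion for another tuple $(m_s')$, that tuple's own leading word is strictly lex-greater, so the standard extremal-term argument closes. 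Two small points you should make explicit: the dimension count needs each bigraded piece $\QB_{k,d}$ to be finite-dimensional (true, since every letter has weight at least $1$, so weight bounds word length), and the spanning step uses that $\sec$ lands in $\QDbi$, which your $\ad(b_0)$-stability remark correctly justifies. In sum: correct, at the cost of more machinery, and buying a finer structural result than the paper's proof delivers.
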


Note that by Proposition \ref{prop:pi0_sec_inverse} and \eqref{eq:lq_in_QDbi}, we can uniquely recover $\Phi\in \lqq$ from $\pizero(\Phi)$.

\begin{proof}
The equality in (i) is obtained by direct calculation. For $w=b_0^{m_1}b_{k_1}\cdots b_0^{m_d}b_{k_d}$, $k_1,\ldots,k_d\geq1,\ m_1,\ldots,m_d\geq0$, we have
\begin{align*}
\sum_{n\geq0} \frac{(-1)^n}{n!} \partial_0^n(w)b_0^n&=\sum_{n\geq0}\sum_{\substack{n_1+\cdots+n_d=n \\ 0\leq n_s\leq m_s}} \frac{(-1)^n}{n_1!\cdots n_d!} \prod_{s=1}^d \frac{m_s!}{(m_s-n_s)!} b_0^{m_1-n_1}b_{k_1}\cdots b_0^{m_d-n_d}b_{k_d}b_0^n \nonumber\\
&=\sum_{n_1+\cdots+n_d+n=m_1+\cdots+m_d} (-1)^n \prod_{s=1}^d \binom{m_s}{n_s} b_0^{n_1}b_{k_1}\cdots b_0^{n_d}b_{k_d}b_0^n \nonumber\\
&=\sec(w),
\end{align*}
where the last equality follows from \eqref{eq:sec_expl}.
For (ii), observe that in \eqref{eq:sec_expl} the only term in $\QB^0$ is the one for $n=0$ and hence $n_s=m_s$ for $s=1,\ldots,d$. Thus, we deduce
\[\pizero(\sec(b_0^{m_1}b_{k_1}\cdots b_0^{m_d}b_{k_d}))=b_0^{m_1}b_{k_1}\cdots b_0^{m_d}b_{k_d}.\]
For (iii), we compute for $k\geq1,\ m\geq0$
\begin{align*}
\partial_0\big(\ad(b_0)^m(b_k)\big)&=\sum_{n=0}^m (-1)^{m+n} \binom{m}{n} \partial_0(b_0^nb_kb_0^{m-n}) \\
&=\sum_{n=1}^m (-1)^{m+n} \binom{m}{n} n b_0^{n-1}b_kb_0^{m-n} + \sum_{n=0}^{m-1} (-1)^{m+n} \binom{m}{n} (m-n)b_0^nb_kb_0^{m-n-1} \\
&=\sum_{n=1}^m (-1)^{m+n} \binom{m}{n} n b_0^{n-1}b_kb_0^{m-n} + \sum_{n=1}^{m} (-1)^{m+n-1} \binom{m}{n} nb_0^{n-1}b_kb_0^{m-n} \\
&=0.
\end{align*}
This calculation implies $\QDbi \subset \ker(\partial_0)$. So, assume that $w\in \QB$ satisfies $\partial_0(w)=0$. Write $w=\sum_{n\geq0} w_nb_0^n$ with $w_n\in\QB^0$. Then, we have
\[0=\partial_0(w)=\sum_{n\geq0} \partial_0(w_n)b_0^n+\sum_{n\geq1} n w_nb_0^{n-1},\qquad n\geq1.\]
Since $\partial_0(w_n)\in\QB^0$, coefficient comparison with respect to $b_0$ at the end leads to
\[\partial_0(w_{n-1})=-nw_n, \qquad n\geq1. \]
Inductively, we get
\begin{align} \label{eq:wn_and_partial0} 
w_n=\frac{(-1)^n}{n!} \partial_0^n(w_0),\qquad n\geq1.
\end{align}
 We deduce for $w\in \QDbi$  by combining (i) and \eqref{eq:wn_and_partial0} that
\[\sec(\pizero(w))=\sec(w_0)= \sum_{n\geq0} \frac{(-1)^n}{n!} \partial_0^n(w_0)b_0^n=\sum_{n\geq0} w_nb_0^n=w. \qedhere \]
\end{proof}

\begin{defi} \label{def:rho} Let $\rho:\QB^0\to \QB^0$ be the $\QQ$-linear map given by
\[\rho(b_0^{m_1}b_{k_1}\cdots b_0^{m_d}b_{k_d})=\hspace{-0.3cm}\sum_{\substack{l_1+\cdots+l_d=k_1+\cdots+k_d \\ n_1+\cdots+n_d=m_1+\cdots+m_d \\ l_s\geq1,\ n_s\geq0}} \hspace{-0.1cm} (-1)^{l_d+n_d-1}\prod_{s=1}^{d-1} \binom{k_s-1}{l_s-1}\binom{m_s}{n_s} b_0^{n_d}b_{l_d}b_0^{n_1}b_{l_1}\cdots b_0^{n_{d-1}}b_{l_{d-1}}.\]
\end{defi}

\begin{prop} \label{prop:lq_rho_inv} If $\Phi\in \lqq$, then $\rho(\pizero(\Phi))=\pizero(\Phi)$.
\end{prop}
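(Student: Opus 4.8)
The plan is to read conditions (iii) and (iv) of Definition \ref{def:lq} as the $\QB$-incarnations of Ecalle's \emph{alternality} and \emph{swap-invariance}, and to exhibit $\rho$ as the incarnation of the \emph{push} operator; the statement then becomes the assertion that an alternal, swap-invariant element with even depth-$1$ part is push-invariant, which is exactly the input of the closure argument in \cite[Theorem 2.5.6]{Sc15} (going back to \cite{Ec11}). Since $\tau$ and $\rho$ live on $\QB^0$ whereas primitivity is a statement about $\Phi\in\QB$, the first task is to transport the antipode to $\QB^0$. Let $S$ be the antipode of the shuffle Hopf algebra, $S(b_{s_1}\cdots b_{s_l})=(-1)^l b_{s_l}\cdots b_{s_1}$, and set $\sigma=\pizero\circ S\circ \sec:\QB^0\to\QB^0$. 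Because $\Phi\in\QDbi$ gives $\Phi=\sec(\pizero(\Phi))$ by Proposition \ref{prop:pi0_sec_inverse}, and because condition (iii) makes $\Phi$ primitive, hence $S(\Phi)=-\Phi$, applying $\pizero$ yields
\[\sigma(\pizero(\Phi))=\pizero(S(\Phi))=-\pizero(\Phi).\]
This is the $\QB^0$-form of mantar-invariance that will feed the computation.

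The core step is a purely combinatorial identity expressing $\rho$ through $\tau$ and $\sigma$. First I would check it in depth $1$: there $\sec(b_0^mb_k)=\ad(b_0)^m(b_k)$, a short computation gives $\sigma=-\id$ while $\tau(b_0^mb_k)=b_0^{k-1}b_{m+1}$, so that $\tau\sigma\tau\sigma=\id$, whereas Definition \ref{def:rho} gives $\rho(b_0^mb_k)=(-1)^{k+m-1}b_0^mb_k$. The discrepancy is exactly the scalar operator $\nu$ acting by $(-1)^{\wt-\dep}$, the $\QB^0$-form of Ecalle's $\operatorname{neg}$. I therefore expect the identity
\[\rho=\nu\circ\tau\circ\sigma\circ\tau\circ\sigma\]
of operators on $\QB^0$, in which the cyclic rotation built into $\rho$ arises from the two reversals inside the $\sigma$'s combined with the order reversal in $\tau$, the binomials $\binom{m_s}{n_s}$ come from the $\ad(b_0)$-expansion inside $\sec$, and the binomials $\binom{k_s-1}{l_s-1}$ together with the remaining signs come from the index shifts $k_s\mapsto m_s+1$ implicit in $\tau$. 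Establishing this identity in arbitrary depth is the main obstacle: it amounts to matching, term by term, the redistribution-and-rotation pattern of Definition \ref{def:rho} against the fourfold composition. The cleanest route is likely induction on the depth $d$, peeling off the last block $b_0^{m_d}b_{k_d}$, since both $\tau$ and $\sigma$ act on that block in a controlled recursive way; the sign/$\nu$ bookkeeping and the faithful transfer of primitivity through $\sec$ are where the care is needed.

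Granting the identity, the conclusion is immediate. By condition (iv) we have $\tau(\pizero(\Phi))=\pizero(\Phi)$; by the displayed mantar-invariance $\sigma(\pizero(\Phi))=-\pizero(\Phi)$; and the parity statement of Theorem \ref{thm:lq_parity} gives $\wt\equiv\dep\bmod 2$ on $\lqq$, so $\nu$ acts as the identity on $\pizero(\Phi)$ (at depth $1$ this is exactly condition (ii), which also anchors the induction). Feeding these into $\rho=\nu\circ\tau\circ\sigma\circ\tau\circ\sigma$ and tracking the two sign flips coming from $\sigma$, one obtains $\rho(\pizero(\Phi))=\pizero(\Phi)$. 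The conceptual guarantee that this must close up is the bimould dictionary of Section \ref{sec:bimoulds}: under $\bimap$ the operator $\rho$ becomes $\operatorname{push}$, conditions (ii)--(iv) become the defining properties of $\BARI$, and the push-invariance of alternal, swap-invariant bimoulds with even depth-$1$ component is precisely the fact used in \cite{Sc15}. If the direct combinatorial identity turns out to be too unwieldy, this dictionary furnishes an alternative proof by transporting the statement to $\BARI$ and invoking that push-invariance.
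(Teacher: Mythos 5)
Your overall architecture coincides with the paper's: the paper likewise sets $S_0=\pizero\circ S\circ\sec$, derives $S_0(\pizero(\Phi))=-\pizero(\Phi)$ from primitivity via Proposition \ref{prop:pi0_sec_inverse} exactly as you do, expresses $\rho$ through $\tau$, $S_0$ and the parity sign, and closes with Theorem \ref{thm:lq_parity}. However, your central identity is written in the wrong order, and as an operator identity on $\QB^0$ it is \emph{false}. You claim $\rho=\nu\circ\tau\circ\sigma\circ\tau\circ\sigma$ (apply $\sigma$ first); what the paper proves in \eqref{eq:rho_equal_S_tau} is $\rho=\nu\circ S_0\circ\tau\circ S_0\circ\tau$ (apply $\tau$ first). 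Since $\tau^2=\id$ and $\nu$ commutes with $\tau$, your version is equivalent to $\tau\circ\rho\circ\tau=\rho$, which already fails in depth $2$: Definition \ref{def:rho} gives $\rho(b_2b_1)=-b_2b_1+b_1b_2$, whereas following your composition, $\sigma(b_2b_1)=b_1b_2$, $\tau(b_1b_2)=b_0b_1b_1$, $\sigma(b_0b_1b_1)=b_0b_1b_1$, $\tau(b_0b_1b_1)=b_1b_2$, and $\nu=(-1)^{3+2}=-1$, so you obtain $-b_1b_2\neq\rho(b_2b_1)$. (The paper's order does check out: $\tau(b_2b_1)=b_1b_0b_1$, $S_0(b_1b_0b_1)=b_0b_1b_1-b_1b_0b_1$, applying $\tau$ gives $b_1b_2-b_2b_1$, applying $S_0$ gives $b_2b_1-b_1b_2$, and multiplying by $\nu=-1$ recovers $\rho(b_2b_1)$.) Your depth-$1$ sanity check could not detect this because there $\sigma=-\id$ commutes with everything. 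The error is harmless for the final application, since $\pizero(\Phi)$ is $\tau$-invariant and both orders agree on $\tau$-invariant elements; but the identity you propose to establish by induction is the false one, and the induction would break down at depth $2$.

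Beyond the ordering, note that this identity is the entire content of the paper's proof, and you leave it unproven (your self-declared ``main obstacle''). No induction is needed: the paper verifies it by a direct four-step computation, applying $\tau$, the explicit formula \eqref{eq:S_0} for $S_0$, then $\tau$ and $S_0$ again, whereupon the binomials $\binom{k_s-1}{l_s-1}\binom{m_s}{n_s}$, the cyclic rotation of Definition \ref{def:rho}, and the total sign $(-1)^{\wt(w)+\dep(w)}$ all appear immediately. Your fallback through the bimould dictionary is viable in principle --- alternality of $A$ and of $\operatorname{swap}(A)=A$ yields push-invariance by \cite[Lemma 2.5.5]{Sc15}, exactly the mechanism the paper invokes in Propositions \ref{prop:bari_parity} and \ref{prop:BARI_Liealg} --- but it is not free of charge: you would still need to prove that $\bimap$ intertwines $\rho$ with $\operatorname{push}$, a computation of essentially the same size as the one you are deferring, and one that demands the same care with the order of composition that tripped you up here.
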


\begin{proof} Let $S:\QB\to \QB$ be the antipode of the Hopf algebra $(\QB,\cdot,\shco)$, i.e., we have
\begin{align} \label{eq:antipode}
S(b_{s_1}\cdots b_{s_l})=(-1)^l b_{s_l}\cdots b_{s_1}.
\end{align}
By condition (iii) in Definition \ref{def:lq}, $\Phi\in \lqq$ is a primitive element of $(\QB,\cdot,\shco)$. Therefore, we have $S(\Phi)=-\Phi$. Thus by Proposition \ref{prop:pi0_sec_inverse}, the projection $\pizero(\Phi)\in \QB^0$ satisfies $S_0(\pizero(\Phi))=-\pizero(\Phi)$ where $S_0=\pizero\circ S\circ \sec$ is explicitly given by
\begin{align} \label{eq:S_0}
S_0(b_0^{m_1}b_{k_1}\cdots b_0^{m_d}b_{k_d})=(-1)^d \sum_{\substack{n_1+\cdots+n_d=m_1+\cdots+m_d \\ n_1,\ldots,n_d\geq 0}} \prod_{s=2}^d (-1)^{n_s} \binom{m_s}{n_s} b_0^{n_1}b_{k_d}b_0^{n_d}b_{k_{d-1}}\cdots b_0^{n_2}b_{k_1}.
\end{align}
We show that 
\begin{align*} 
\rho(\pizero(\Phi))=(S_0\circ \tau\circ S_0\circ \tau)(\pizero(\Phi)).
\end{align*}
Since we have for $\Phi\in \lqq$ that $\tau(\pizero(\Phi))=\pizero(\Phi)$ and $S_0(\pizero(\Phi))=-\pizero(\Phi)$, we then get $\rho(\pizero(\Phi))=\pizero(\Phi)$. Let $w=b_0^{m_1}b_{k_1}\cdots b_0^{m_d}b_{k_d}$ where $k_1,\ldots,k_d\geq1,\ m_1,\ldots,m_d\geq0$, then
\begin{align}  \label{eq:rho_equal_S_tau}
&S_0\circ \tau\circ S_0\circ \tau(w)=S_0\circ \tau\circ S_0(b_0^{k_d-1}b_{m_d+1}\cdots b_0^{k_1-1}b_{m_1+1}) \nonumber\\
&=(-1)^d \sum_{\substack{l_1+\cdots+l_d=k_1+\cdots+k_d \\ l_s\geq1}} \prod_{s=1}^{d-1} (-1)^{l_s-1} \binom{k_s-1}{l_s-1} S_0\circ\tau(b_0^{l_d-1}b_{m_1+1}b_0^{l_1-1}b_{m_2+1}\cdots b_0^{l_{d-1}-1}b_{m_d+1}) \nonumber\\
&=(-1)^d \sum_{\substack{l_1+\cdots+l_d=k_1+\cdots+k_d \\ l_s\geq1}} \prod_{s=1}^{d-1} (-1)^{l_s-1} \binom{k_s-1}{l_s-1} S_0(b_0^{m_d}b_{l_{d-1}}\cdots b_0^{m_2}b_{l_1}b_0^{m_1}b_{l_d}) \nonumber\\
&=\sum_{\substack{l_1+\cdots+l_d=k_1+\cdots+k_d \\ n_1+\cdots+n_d=m_1+\cdots+m_d \\ l_s\geq1,\ n_s\geq0}} \prod_{s=1}^{d-1} (-1)^{l_s+n_s-1}\binom{k_s-1}{l_s-1} \binom{m_s}{n_s} b_0^{n_d}b_{l_d}b_0^{n_1}b_{l_1}\cdots b_0^{n_{d-1}}b_{l_{d-1}} \nonumber\\
&=(-1)^{\wt(w)+\dep(w)}\rho(b_0^{m_1}b_{k_1}\cdots b_0^{m_d}b_{k_d}).
\end{align}
By Theorem \ref{thm:lq_parity}, we have $\lqq_{k,d}=\{0\}$ if $k\not\equiv d\mod 2$. In other words, elements in $\lqq$ consist of words $w\in \QB$ with $\wt(w)+\dep(w)\equiv 0\mod 2$. Thus, deduce from the previous calculation that we have $S_0\circ \tau\circ S_0\circ \tau(\pizero(\Phi))=\rho(\pizero(\Phi))$ for $\Phi\in\lqq$.
\end{proof}

For $w=b_0^{m_1}b_{k_1}\cdots b_0^{m_d}b_{k_d}b_0^{m_{d+1}}$, let $\derr{w},\ \derl{w}:\QB\to\QB$ be the derivations given by 
\begin{align}
\derr{w}(b_0)&=\derl{w}(b_0)=0, \nonumber \\
\label{eq:derr}
\derr{w}(b_i)&=\sum_{\substack{l_1+\cdots+l_d+l=k_1+\cdots+k_d\\ l_1,\ldots,l_d\geq1,\ l\geq0}}(-1)^l\prod_{s=1}^d \binom{k_s-1}{l_s-1} b_{i+l}b_0^{m_1}b_{l_1}\dots b_0^{m_d}b_{l_d}b_0^{m_{d+1}}, \qquad i\geq1, \\
\label{eq:derl}
\derl{w}(b_i)&=\sum_{\substack{l_1+\cdots+l_d+l=k_1+\cdots+k_d\\ l_1,\ldots,l_d\geq1,\ l\geq0}}(-1)^l\prod_{s=1}^d \binom{k_s-1}{l_s-1} b_0^{m_1}b_{l_1}\dots b_0^{m_d}b_{l_d}b_0^{m_{d+1}}b_{i+l}, \qquad i\geq1.
\end{align}
Then, we have
\begin{align} \label{eq:dA_in_dAl_dAr}
\der{w}=\derr{w}-\derl{w}.
\end{align}

For $w\in \QB^0$ and $\bullet\in \{l,r\}$, define $\derbullzero{w}:\QB^0\to \QB^0$ by $
\derbullzero{w}=\pizero\circ \derbull{\sec(w)}\circ\sec$. Then, by Proposition \ref{prop:pi0_sec_inverse}, we get
\begin{align*}
\derbullzero{\pizero(w)}(\pizero(v))=\pizero(\derbull{w}(v)), \qquad v,w\in \QDbi.
\end{align*}
Explicitly, we have for $k_1,\ldots,k_d,s_1,\ldots,s_e\geq1$, $m_1,\ldots,m_d,t_1,\ldots,t_e\geq0$
\begin{align} 
\label{eq:dAr0_expl}
\derrzero{b_0^{m_1}b_{k_1}\cdots b_0^{m_d}b_{k_d}}\big(b_0^{t_1}b_{s_1}\cdots b_0^{t_e}b_{s_e}\big)&=\sum_{i=1}^e \sum_{\substack{l_1+\cdots+l_d+l=k_1+\cdots+k_d \\ n_1+\cdots+n_d+n=m_1+\cdots+m_d \\ l_s\geq1,\ n_s,n,l\geq0, \ n=0 \text{ if } i=e}} (-1)^{l+n} \prod_{s=1}^d \binom{k_s-1}{l_s-1}\binom{m_s}{n_s} \\
&\cdot b_0^{t_1}b_{s_1}\cdots b_0^{t_{i-1}}b_{s_{i-1}}b_0^{t_i}\big(b_{s_i+l}b_0^{n_1}b_{l_1}\cdots b_0^{n_d}b_{l_d}b_0^n\big)b_0^{t_{i+1}}b_{s_{i+1}}\cdots b_0^{t_e}b_{s_e}, \nonumber \\
\label{eq:dAl0_expl}
\derlzero{b_0^{m_1}b_{k_1}\cdots b_0^{m_d}b_{k_d}}\big(b_0^{t_1}b_{s_1}\cdots b_0^{t_e}b_{s_e}\big)&=\sum_{i=1}^e \sum_{\substack{l_1+\cdots+l_d+l=k_1+\cdots+k_d \\ n_1+\cdots+n_d+n=m_1+\cdots+m_d \\ l_s\geq1,\ n_s,n,l\geq0}} (-1)^{l+n} \prod_{s=1}^d \binom{k_s-1}{l_s-1}\binom{m_s}{n_s} \\
&\cdot b_0^{t_1}b_{s_1}\cdots b_0^{t_{i-1}}b_{s_{i-1}}b_0^{t_i}\big(b_0^{n_1}b_{l_1}\cdots b_0^{n_d}b_{l_d}b_0^nb_{s_i+l}\big)b_0^{t_{i+1}}b_{s_{i+1}}\cdots b_0^{t_e}b_{s_e}. \nonumber 
\end{align}
We set
\begin{align} \label{eq:derzero}
\derzero{w}=\derrzero{w}-\derlzero{w},\qquad w\in\QB^0.
\end{align}

Moreover, for $v,w\in \QB^0$ set
\begin{align} \label{eq:conzero}
v \conczero w= \pizero\big(\sec(v)\sec(w)\big).
\end{align}
So as before, we have by Proposition \ref{prop:pi0_sec_inverse} for $v,w\in \QDbi$ that $\pizero(v)\conczero\pizero(w)=\pizero(vw)$.

\begin{prop} \label{prop:dA0_and_tau} For $w,v\in \QB^0$, we have
\begin{enumerate}[(i)]
\item $\big(\tau\circ \derrzero{\tau(w)}\circ \tau\big)(v)= \derrzero{w}(v)+w \conczero v-\tau\big(\tau(w)\conczero\tau(v)\big)$,
\item $\tau\circ \derlzero{\tau(w)}\circ \tau=\derlzero{\rho(w)}$.
\end{enumerate}
\end{prop}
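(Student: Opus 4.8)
Both (i) and (ii) are identities of $\QQ$-linear endomorphisms of $\QB^0$, so the plan is to verify them on a basis word $v=b_0^{t_1}b_{s_1}\cdots b_0^{t_e}b_{s_e}$, with derivation word $w=b_0^{m_1}b_{k_1}\cdots b_0^{m_d}b_{k_d}$, by direct computation from the explicit formulas \eqref{eq:dAr0_expl} and \eqref{eq:dAl0_expl}, together with Definition \ref{def:tau}, Definition \ref{def:rho}, and the defining relation \eqref{eq:conzero} for $\conczero$. The guiding principle is that $\tau$ reverses a word and transposes the two kinds of exponents ($b_0$-runs $\leftrightarrow$ shifted letter indices), so that conjugation by $\tau$ carries an insertion at position $i$ of $\tau(v)$ to an insertion at position $e+1-i$ of $v$. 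The two layers of redistribution visible in the explicit formulas, the factors $\binom{m_s}{n_s}$ coming from the $\ad(b_0)$-structure of $\sec$ and the factors $\binom{k_s-1}{l_s-1}$ coming from the derivation itself, must then be matched term by term across this reversal, which is where the bulk of the bookkeeping lies.

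For part (ii) I would apply $\derlzero{\tau(w)}$ to $\tau(v)$ via \eqref{eq:dAl0_expl}, read off the inserted block $b_0^{n_1}b_{l_1}\cdots b_0^{n_d}b_{l_d}b_0^nb_{s_i+l}$, and then apply the outer $\tau$. The essential simplification here is that this block always ends in the nonzero letter $b_{s_i+l}$, so it is never killed by the trailing $\pizero$ hidden in $\derlzero{}$; the reindexing is therefore faithful, with no boundary loss, and only the binomial weights remain to be reconciled. Expanding $\rho(w)$ from Definition \ref{def:rho} and substituting into \eqref{eq:dAl0_expl} on the right-hand side, I expect the binomials $\prod_{s=1}^{d-1}\binom{k_s-1}{l_s-1}\binom{m_s}{n_s}$ and the boundary sign $(-1)^{l_d+n_d-1}$ of $\rho$ to reassemble precisely into the weights produced by the $\tau$-conjugation, the free last block of $\rho$ matching the undistributed tail of the insertion; collapsing the resulting nested sums is likely to rely on standard binomial convolution identities.

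For part (i) I would run the same computation for $\derrzero{\tau(w)}(\tau(v))$ from \eqref{eq:dAr0_expl}. The new feature is the constraint ``$n=0$ if $i=e$'', which is exactly the effect of the trailing $\pizero$ discarding words that end in $b_0$; under the $\tau$-reversal this constraint migrates to the opposite boundary. The faithful part of the reindexing reproduces $\derrzero{w}(v)$, and what is left over are precisely the index configurations forbidden at the boundary position, where a run $b_0^n$ with $n>0$ would be pushed to an end of the word. I would then collect these leftover terms and, expanding $\conczero$ through $v\conczero w=\pizero(\sec(v)\sec(w))$ and \eqref{eq:sec_expl}, identify them with $w\conczero v-\tau\big(\tau(w)\conczero\tau(v)\big)$.

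The main obstacle will be this boundary bookkeeping in (i). Because $\derr{w}$ prepends the (shifted) derivation word to each modified letter, the extremal positions behave differently from the interior ones once we project back into $\QB^0$, and matching the resulting defect against the two concatenation expressions requires carefully reconciling the $b_0$-redistributions created by the two copies of $\sec$ inside $\conczero$ with those created by the $\tau$-conjugation. The contrast between the two parts, a clean $\rho$-twist in (ii) versus a genuine concatenation correction in (i), is thus entirely a consequence of whether the inserted block ends in a nonzero letter, so that $\pizero$ acts trivially, or can terminate in $b_0$, so that $\pizero$ truncates it.
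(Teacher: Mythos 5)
Your plan is correct and is essentially the paper's own proof (Appendix \ref{ap:proof_prop_dA0_and_tau}): there too, both identities are verified on basis words directly from \eqref{eq:dAr0_expl} and \eqref{eq:dAl0_expl}, the $\tau$-conjugation in (i) leaves exactly the two boundary defects (the constraint ``$n=0$ if $i=e$'' migrating to the opposite end), which collapse and are identified with $w\conczero v$ and $\tau\big(\tau(w)\conczero\tau(v)\big)$ by expanding $\conczero$ through $\sec$ and \eqref{eq:sec_expl}, while in (ii) the clean reindexing is compared with $\derlzero{\rho(w)}$ by coefficient matching, resolved through alternating Vandermonde-type binomial summations, just as you anticipate. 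No gap to report.
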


The proof is given in Appendix \ref{ap:proof_prop_dA0_and_tau}.

\vspace{0,4cm}
For $v,w\in \QB^0$ write 
\begin{align} \label{eq:arizero}
\arizero{v}{w}&=\pizero\big(\ari{\sec(v)}{\sec(w)}\big) \nonumber \\
&=\derzero{v}(w)-\derzero{w}(v)+v\conczero w - w\conczero v.
\end{align}
Then by Proposition \ref{prop:pi0_sec_inverse}, we have for $v,w\in \QDbi$ that 
\begin{align} \label{eq:Lie_brack_QB0} \arizero{\pizero(v)}{\pizero(w)}=\pizero\big(\ari{v}{w}\big).
\end{align}

\begin{cor} \label{cor:Lie_brack_tau_inv} If $v,w\in \QB^0$ are $\rho$-invariant, then we have
	\[\tau\big(\arizero{\tau(v)}{\tau(w)}\big)=\arizero{v}{w}.\]
\end{cor}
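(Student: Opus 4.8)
The plan is to expand both sides using the definition \eqref{eq:arizero} of the bracket and then transform each summand with Proposition \ref{prop:dA0_and_tau}. Writing out $\derzero{} = \derrzero{} - \derlzero{}$, the starting point is
\begin{align*}
\arizero{v}{w} = \derrzero{v}(w) - \derlzero{v}(w) - \derrzero{w}(v) + \derlzero{w}(v) + v\conczero w - w\conczero v.
\end{align*}
First I would substitute $\tau(v),\tau(w)$ for $v,w$ in this formula, apply $\tau$ to the whole expression, and use $\QQ$-linearity of $\tau$ to distribute it over the six resulting summands.

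Next I would treat the derivation terms one at a time. For the right derivations, Proposition \ref{prop:dA0_and_tau}(i) gives, reading it once as stated and once with the roles of $v$ and $w$ exchanged,
\begin{align*}
\tau\big(\derrzero{\tau(v)}(\tau(w))\big) &= \derrzero{v}(w) + v\conczero w - \tau\big(\tau(v)\conczero\tau(w)\big), \\
\tau\big(\derrzero{\tau(w)}(\tau(v))\big) &= \derrzero{w}(v) + w\conczero v - \tau\big(\tau(w)\conczero\tau(v)\big).
\end{align*}
For the left derivations, Proposition \ref{prop:dA0_and_tau}(ii) combined with the hypotheses $\rho(v)=v$ and $\rho(w)=w$ gives
\begin{align*}
\tau\big(\derlzero{\tau(v)}(\tau(w))\big) = \derlzero{\rho(v)}(w) = \derlzero{v}(w), \qquad \tau\big(\derlzero{\tau(w)}(\tau(v))\big) = \derlzero{w}(v).
\end{align*}

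The crucial observation is that the two extra concatenation correction terms $\tau(\tau(v)\conczero\tau(w))$ and $\tau(\tau(w)\conczero\tau(v))$ produced by part (i) are precisely the transformed images of the last two summands $\tau(v)\conczero\tau(w)$ and $\tau(w)\conczero\tau(v)$ of $\arizero{\tau(v)}{\tau(w)}$. Assembling all six transformed summands with their signs, these correction terms cancel in pairs against the transformed concatenation terms, leaving exactly $\derrzero{v}(w) - \derlzero{v}(w) - \derrzero{w}(v) + \derlzero{w}(v) + v\conczero w - w\conczero v = \arizero{v}{w}$, which is the claim.

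I expect the main (and essentially only) obstacle to be the bookkeeping: correctly applying Proposition \ref{prop:dA0_and_tau}(i) in both orderings with matching signs, and verifying that the non-derivation correction terms cancel rather than accumulate. The $\rho$-invariance enters in exactly one place, namely to rewrite $\derlzero{\rho(v)}$ and $\derlzero{\rho(w)}$ as $\derlzero{v}$ and $\derlzero{w}$; without it the left-derivation contributions would fail to match, which is why the hypothesis is needed.
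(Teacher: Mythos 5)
Your proposal is correct and follows the paper's proof essentially verbatim: the same expansion of $\arizero{\tau(v)}{\tau(w)}$ into the six summands via $\derzero{}=\derrzero{}-\derlzero{}$, the same application of Proposition \ref{prop:dA0_and_tau}(i) in both orderings and of (ii) together with $\rho$-invariance, and the same cancellation of the concatenation correction terms $\tau\big(\tau(v)\conczero\tau(w)\big)$ and $\tau\big(\tau(w)\conczero\tau(v)\big)$ against the transformed $\conczero$-terms. You also correctly pinpoint that the $\rho$-invariance hypothesis is used exactly once, to replace $\derlzero{\rho(v)}$ and $\derlzero{\rho(w)}$ by $\derlzero{v}$ and $\derlzero{w}$, just as in the paper.
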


\begin{proof} For any $v,w\in \QB^0$, we have by Definition \ref{def:Lie_bracket_lq} and \eqref{eq:dA_in_dAl_dAr}
\begin{align*}
\arizero{v}{w}&=\derzero{v}(w)-\derzero{w}(v)+v\conczero w - w\conczero v \\ &=\derrzero{v}(w)-\derlzero{v}(w)-\derrzero{w}(v)+\derlzero{w}(v)+v\conczero w - w\conczero v.
\end{align*}
Applying Proposition \ref{prop:dA0_and_tau} yields
\begin{align*}
\tau\big(\arizero{\tau(v)}{\tau(w)}\big)&=\tau\Big(\derrzero{\tau(v)}(\tau(w))-\derlzero{\tau(v)}(\tau(w))-\derrzero{\tau(w)}(\tau(v))+\derlzero{\tau(w)}(\tau(v)) \\
&\hspace{7.5cm}+\tau(v)\conczero \tau(w) - \tau(w)\conczero \tau(v)\Big)\\
&=\derrzero{v}(w)+v\conczero w-\tau\big(\tau(v)\conczero\tau(w)\big)-\derlzero{\rho(v)}(w)-\derrzero{w}(v)-w\conczero v \\
&\hspace{1.5cm}+\tau\big(\tau(w)\conczero\tau(v)\big)+\derlzero{\rho(w)}(v)+\tau\big(\tau(v)\conczero \tau(w)\big) - \tau\big(\tau(w)\conczero \tau(v)\big) \\
&=\derrzero{v}(w)-\derlzero{\rho(v)}(w)-\derrzero{w}(v)+\derlzero{\rho(w)}(v)+v\conczero w-w\conczero v.
\end{align*}
If $v,w\in \QB^0$ are $\rho$-invariant, we deduce with \eqref{eq:dA_in_dAl_dAr}
\begin{align*}
\tau\big(\arizero{\tau(v)}{\tau(w)}\big)=\derzero{v}(w)-\derzero{w}(v)+v\conczero w-w\conczero v=\arizero{v}{w}.\hspace{4.2cm}
\qedhere\end{align*}
\end{proof}

\section{A derivation on $\lqq$} \label{sec:deriv_on_lq}

For this section, it is convenient to describe $\lqq$ in terms of the alphabet $\Dbi$ from \eqref{eq:def_Dbi}.

\vspace{0.3cm}
By Lazard elimination, we have $\LieB=\LieDbi\oplus \QQ b_0$.
Thus, by condition (i) in Definition \ref{def:lq} and \eqref{eq:lq_in_Lie} we deduce
\[\lqq\subset\LieDbi.\]
We extend the map $\tau$ to $\QDbi$ with the map $\sec$ from Definition \ref{def:sec},
\begin{align} \label{eq:def_tau_Dbi}
\tau_{\Dbi}= \sec \circ\ \!\tau \circ \pizero.
\end{align}
For $k_1,\ldots,k_d\geq1,\ m_1,\ldots,m_d\geq0$, we have with $n_d:=m_d$
\begin{align*}
&\tau_{\Dbi}(D_{k_1,m_1}\cdots D_{k_d,m_d})=\sum_{\substack{l_1^{(s)}+\cdots+l_s^{(s)}=k_s-1,\ s=1,\ldots,d \\ n_1+\cdots+n_{d-1}+n=m_1+\cdots+m_d}} \prod_{s=1}^d \binom{k_s-1}{l_1^{(s)},\ldots,l_s^{(s)}} \binom{m_s}{n_s}(-1)^{m_s+n_s} \\
&\cdot D_{m_d+m_{d-1}-n_{d-1}+1,l_d^{(d)}}D_{n_{d-1}+m_{d-2}-n_{d-2}+1,l_{d-1}^{(d-1)}+l_{d-1}^{(d)}}\cdots D_{n_2+m_1-n_1+1,l_2^{(2)}+\cdots+l_2^{(d)}}D_{n_1+1,l_1^{(1)}+\cdots l_1^{(d)}}.
\end{align*}
Then alternatively, we could define $\lqq$ as follows.

\begin{lem} \label{lem:lqq_in_Dbi} The space $\lqq$ consists of all $\Phi\in \LieDbi$ such that
\begin{flalign*}
\qquad \text{(ii')} \quad & (\Phi\mid D_{k,m})=0, \qquad k+m \text{ even},\ k\geq1,\ m\geq0, &&\\
\qquad \text{(iv')} \quad & \tau_{\Dbi}(\Phi)=\Phi.
\end{flalign*}
\end{lem}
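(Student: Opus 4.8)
The plan is to show that re-expressing $\Phi$ in the alphabet $\Dbi$ converts conditions (i) and (iii) of Definition \ref{def:lq} into the single membership $\Phi\in\LieDbi$, while conditions (ii) and (iv) translate verbatim into (ii') and (iv'). So I would prove two inclusions by checking these three correspondences separately.

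First I would dispose of (i) and (iii) together. By \eqref{eq:lq_in_Lie}, condition (iii) is equivalent to $\Phi\in\LieB$, and Lazard elimination gives the decomposition $\LieB=\LieDbi\oplus\QQ b_0$. Writing $\Phi=\Phi'+c\,b_0$ with $\Phi'\in\LieDbi$ and $c\in\QQ$, every term of $\Phi'$ contains at least one letter $b_k$ with $k\geq1$ (each generator $D_{k,m}$ does, and products of generators only raise the depth), so $(\Phi'\mid b_0)=0$ and hence $(\Phi\mid b_0)=c$. Thus condition (i) forces $c=0$, which means conditions (i) and (iii) together are exactly the statement $\Phi\in\LieDbi$.

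Next I would handle the equivalence of (ii) and (ii'), for which the key identity is $(\Phi\mid b_0^mb_k)=(\Phi\mid D_{k,m})$ whenever $\Phi\in\LieDbi$. The point is that the depth-$1$ part (in the $\B$-grading) of $\Phi$ comes precisely from the length-$1$ words $D_{k,m}$ in its $\Dbi$-expansion, since any product of two or more generators has $\B$-depth $\geq2$. Expanding $D_{k,m}=\sum_{n=0}^m(-1)^{m+n}\binom{m}{n}b_0^nb_kb_0^{m-n}$, the unique summand ending in $b_k$ with exactly $m$ zeros is the $n=m$ term $b_0^mb_k$, carrying coefficient $1$; moreover no $D_{k,m'}$ with $m'\neq m$ can produce the word $b_0^mb_k$, as all its summands have $m'$ zeros. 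Hence the coefficient of $b_0^mb_k$ in $\Phi$ equals the coefficient of $D_{k,m}$, giving the identity and thus (ii) $\Leftrightarrow$ (ii').

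Finally, for (iv) $\Leftrightarrow$ (iv') I would use that $\sec$ and $\pizero$ are mutually inverse on the relevant subspaces. Since $\Phi\in\LieDbi\subset\QDbi$, Proposition \ref{prop:pi0_sec_inverse}(iii) gives $\sec(\pizero(\Phi))=\Phi$; applying $\sec$ to (iv) then yields $\tau_{\Dbi}(\Phi)=\sec(\tau(\pizero(\Phi)))=\sec(\pizero(\Phi))=\Phi$, which is (iv'). Conversely, applying $\pizero$ to (iv') and using Proposition \ref{prop:pi0_sec_inverse}(ii), namely $\pizero\circ\sec=\id$ on $\QB^0$ together with $\tau(\pizero(\Phi))\in\QB^0$, recovers $\tau(\pizero(\Phi))=\pizero(\Phi)$, which is (iv). I do not anticipate a serious obstacle here; the only step requiring real care is the coefficient bookkeeping in the (ii) $\Leftrightarrow$ (ii') translation, where one must confirm that $D_{k,m}$ is the unique $\Dbi$-generator contributing to the word $b_0^mb_k$ and that it does so with coefficient exactly $1$.
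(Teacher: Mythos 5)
Your proposal is correct and takes essentially the same route as the paper: the paper's own proof uses Lazard elimination ($\LieB=\LieDbi\oplus\QQ b_0$) together with \eqref{eq:lq_in_Lie} to handle conditions (i) and (iii), and simply asserts that (ii)$\Leftrightarrow$(ii') and (iv)$\Leftrightarrow$(iv') hold ``by construction''. Your coefficient bookkeeping showing that $D_{k,m}$ is the unique generator contributing to $b_0^mb_k$ (with coefficient $1$), and your use of Proposition \ref{prop:pi0_sec_inverse} via $\tau_{\Dbi}=\sec\circ\tau\circ\pizero$, merely fill in the details the paper leaves implicit, and they do so correctly.
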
 
\begin{proof}
The conditions (i) and (iii) in the Definition \ref{def:lq} of $\lqq$ translate by \eqref{eq:lq_in_Lie} into $\Phi$ being an element of $\LieDbi$. A reformulation of (ii) resp. (iv) in Definition \ref{def:lq} is by construction given by (ii') resp. (iv').
\end{proof}

The derivation $\der{w}$ from Definition \ref{def:Lie_bracket_lq} translates for $w=D_{k_1,m_1}\cdots D_{k_d,m_d}$ in $\QDbi$ to
\begin{align} \label{eq:def_Liebracket_lq}
\der{w}(D_{i,n})&=\sum_{\substack{l_1+\cdots+l_d+l=k_1+\cdots+k_d \\ l_1,\ldots,l_d\geq1,\ l\geq0}} (-1)^l\prod_{s=1}^d\binom{k_s-1}{l_s-1} \ad(b_0)^n\big([D_{i+l,0},D_{l_1,m_1}\cdots D_{l_d,m_d}]\big) \\
&=\sum_{\substack{l_1+\cdots+l_d+l=k_1+\cdots+k_d \\ p_1+\cdots+p_d+p=n \\ l_s\geq1,\ p_s,p,l\geq0}} (-1)^l\prod_{s=1}^d\binom{k_s-1}{l_s-1} \binom{n}{p_1,\ldots,p_d,p} [D_{i+l,p},D_{l_1,m_1+p_1}\cdots D_{l_d,m_d+p_d}], \nonumber
\end{align}
and the Lie bracket from \eqref{eq:def_liebracket_lq} is still written as
\begin{align*}
\ari{f}{g}=\der{f}(g)-\der{g}(f)+[f,g],\qquad f,g\in\QDbi.
\end{align*}

\begin{defi} \label{def:deriv} Let $\delta$ be the derivation on $\QDbi$ given by
\[\delta(D_{k,m})=D_{k+1,m+1},\qquad k\geq1,\ m\geq0.\]
\end{defi}

\begin{prop} \label{prop:d_tau_commute} The maps $\tau_{\Dbi}$ and $\delta$ commute,
\[\tau_{\Dbi}\circ\delta=\delta\circ \tau_{\Dbi}.\]
\end{prop}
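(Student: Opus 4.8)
The plan is to transport the identity onto the subspace $\QB^0$, where $\tau$ and a conjugate of $\delta$ both have transparent combinatorial descriptions, and then to verify the commutation by a term-by-term bookkeeping.

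First I would reduce to an identity on $\QB^0$. Since $\tau_{\Dbi}=\sec\circ\tau\circ\pizero$ and, by Proposition \ref{prop:pi0_sec_inverse}, $\pizero\circ\sec=\id$ on $\QB^0$ and $\sec\circ\pizero=\id$ on $\QDbi$, I set $\delta_0:=\pizero\circ\delta\circ\sec\colon\QB^0\to\QB^0$. Because $\delta(D_{k,m})=D_{k+1,m+1}\in\Dbi$, the map $\delta$ preserves $\QDbi$, and one reads off the intertwining relations $\pizero\circ\delta=\delta_0\circ\pizero$ on $\QDbi$ and $\delta\circ\sec=\sec\circ\delta_0$ on $\QB^0$ directly from $\sec\circ\pizero=\id$. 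Substituting these into the two composites gives, for $w\in\QDbi$,
\[\tau_{\Dbi}(\delta(w))=\sec\big(\tau(\delta_0(\pizero(w)))\big),\qquad \delta(\tau_{\Dbi}(w))=\sec\big(\delta_0(\tau(\pizero(w)))\big).\]
As $\pizero$ maps $\QDbi$ onto $\QB^0$ and $\sec$ is injective, the proposition becomes equivalent to $\tau\circ\delta_0=\delta_0\circ\tau$ on $\QB^0$.

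Next I would make $\delta_0$ explicit. Introduce the concatenation-derivation $\widetilde{\delta}$ on $\QB$ with $\widetilde{\delta}(b_0)=0$ and $\widetilde{\delta}(b_k)=[b_0,b_{k+1}]$ for $k\geq1$; an induction on $m$ gives $\widetilde{\delta}(D_{k,m})=D_{k+1,m+1}$, so $\widetilde{\delta}$ restricts to $\delta$ on $\QDbi$ and $\delta_0=\pizero\circ\widetilde{\delta}\circ\sec$. Now $\sec(u)-u\in\ker\pizero$ for $u\in\QB^0$, and $\ker\pizero$ is spanned by words ending in $b_0$; since $\widetilde{\delta}(vb_0)=\widetilde{\delta}(v)b_0$ again ends in $b_0$, we get $\pizero\circ\widetilde{\delta}=0$ on $\ker\pizero$, hence $\delta_0=\pizero\circ\widetilde{\delta}$ on $\QB^0$. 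Expanding the derivation and discarding the words killed by $\pizero$ yields, for $w=b_0^{m_1}b_{k_1}\cdots b_0^{m_d}b_{k_d}$,
\[\delta_0(w)=\sum_{i=1}^{d} b_0^{m_1}b_{k_1}\cdots b_0^{m_i+1}b_{k_i+1}\cdots b_0^{m_d}b_{k_d}-\sum_{i=1}^{d-1} b_0^{m_1}b_{k_1}\cdots b_0^{m_i}b_{k_i+1}b_0^{m_{i+1}+1}b_{k_{i+1}}\cdots b_0^{m_d}b_{k_d}.\]

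Finally I would prove $\tau\circ\delta_0=\delta_0\circ\tau$ by comparing the two sides summand by summand. Recall that $\tau$ reverses the order of the blocks $b_0^{m_i}b_{k_i}$ and sends a block $b_0^{m}b_{k}$ to $b_0^{k-1}b_{m+1}$. Writing $A_i$ for the operation yielding the $i$-th summand of the first sum (increment both indices of block $i$) and $B_i$ for the $i$-th summand of the second sum (increment the subscript of block $i$ and the $b_0$-exponent of block $i+1$), so that $\delta_0=\sum_{i=1}^{d}A_i-\sum_{i=1}^{d-1}B_i$, the heart of the argument is the pair of operator identities $\tau\circ A_i=A_{d+1-i}\circ\tau$ and $\tau\circ B_i=B_{d-i}\circ\tau$. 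Both are checked by applying the block description of $\tau$ to the modified word: for $A_i$ the altered block lands in position $d+1-i$ with both of its indices again incremented, and for $B_i$ the single $b_0$ inserted at the junction of blocks $i,i+1$ is carried by $\tau$ to the junction of blocks $d-i,d+1-i$. Summing and reindexing then gives $\tau(\delta_0(w))=\sum_p A_p(\tau(w))-\sum_p B_p(\tau(w))=\delta_0(\tau(w))$. The main obstacle is this last step: one must correctly match the reversal $i\mapsto d+1-i$ against the mismatched ranges of the two sums ($d$ terms versus $d-1$), and verify that the inserted $b_0$ of each $B_i$-term is transported to the right neighbouring block — the boundary block $i=d$ (which has no $B_d$) and the first and last blocks being exactly where off-by-one and sign errors would hide.
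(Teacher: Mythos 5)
Your proof is correct, but it takes a genuinely different route from the paper's. The paper proves the proposition by brute force in the alphabet $\Dbi$: it expands $\tau_{\Dbi}\big(\delta(D_{k_1,m_1}\cdots D_{k_d,m_d})\big)$ using the explicit multinomial formula for $\tau_{\Dbi}$, applies the Pascal recursions $\binom{k}{l_1,\ldots,l_t}=\sum_j\binom{k-1}{l_1,\ldots,l_j-1,\ldots,l_t}$ and $\binom{m+1}{n}=\binom{m}{n}+\binom{m}{n-1}$, reindexes, and observes that the surplus terms form a telescoping sum whose survivor is exactly $\delta\big(\tau_{\Dbi}(\cdots)\big)$. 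You instead conjugate by the mutually inverse pair $(\pizero,\sec)$ of Proposition \ref{prop:pi0_sec_inverse} to reduce the claim to $\tau\circ\delta_0=\delta_0\circ\tau$ on $\QB^0$, where $\tau$ is the transparent block-reversal of Definition \ref{def:tau} and $\delta_0=\pizero\circ\widetilde{\delta}$ for the derivation $\widetilde{\delta}(b_k)=[b_0,b_{k+1}]$, $\widetilde{\delta}(b_0)=0$; the commutation then reduces to the elementary block identities $\tau\circ A_i=A_{d+1-i}\circ\tau$ and $\tau\circ B_i=B_{d-i}\circ\tau$, which I have checked (including the edge cases $i=d$ for the $A$'s, the matching ranges $1\leq i\leq d-1$ for the $B$'s, and $d=1$). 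Your reduction steps are all legitimate: $\widetilde{\delta}(D_{k,m})=D_{k+1,m+1}$ follows since $\widetilde{\delta}$ commutes with $\ad(b_0)$, the intertwinings $\pizero\circ\delta=\delta_0\circ\pizero$ and $\delta\circ\sec=\sec\circ\delta_0$ follow from $\sec\circ\pizero=\id$ on $\QDbi$, and $\pizero\circ\widetilde{\delta}$ kills $\ker\pizero$ because $\widetilde{\delta}(b_0)=0$; the one point you should make explicit is that $\operatorname{im}(\sec)\subset\QDbi$ (needed to apply $\sec\circ\pizero=\id$ to $\delta(\sec(u))$), which follows from the Leibniz rule for $\ad(b_0)$ applied to Definition \ref{def:sec}. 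What each approach buys: yours is shorter and conceptually cleaner — the paper's telescoping cancellation is precisely the statement, hidden in your setup, that $\pizero\circ\widetilde{\delta}$ annihilates words ending in $b_0$ — while the paper's computation stays in the $\Dbi$-coordinates used throughout Section \ref{sec:deriv_on_lq} and Section \ref{sec:bimoulds} and produces along the way the explicit expansion of $\tau_{\Dbi}\circ\delta$, which is the form that translates directly into the swap/bimould picture.
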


\begin{proof}
Let $k_1,\ldots,k_d\geq1$, $m_1,\ldots,m_d\geq0$. We have with $n_d:=m_d$
\begin{align} \label{eq:lq_deriv_-1}
&\tau_{\Dbi}\big(\delta(D_{k_1,m_1}\cdots D_{k_d,m_d})\big)=\sum_{t=1}^d \sum_{\substack{l_1^{(s)}+\cdots+l_s^{(s)}=k_s-1,\ s\neq t \\ l_1^{(t)}+\cdots+l_t^{(t)}=k_t \\ n_1+\cdots+n_{d-1}+n=m_1+\cdots+m_d+1}} \prod_{\substack{s=1 \\ s\neq t}}^d \binom{k_s-1}{l_1^{(s)},\ldots,l_s^{(s)}} \binom{m_s}{n_s}(-1)^{m_s+n_s} \nonumber \\ &\binom{k_t}{l_1^{(t)},\ldots,l_t^{(t)}}\binom{m_t+1}{n_t}(-1)^{m_t+n_t+1}D_{m_d+m_{d-1}-n_{d-1}+1,l_d^{(d)}}D_{n_{d-1}+m_{d-2}-n_{d-2}+1,l_{d-1}^{(d-1)}+l_{d-1}^{(d)}} \nonumber \\
&\hspace{2cm}\cdots D_{n_{t+1}+m_t-n_t+2,l_{t+1}^{(t+1)}+\cdots+l_{t+1}^{(d)}}\cdots D_{n_2+m_1-n_1+1,l_2^{(2)}+\cdots+l_2^{(d)}}D_{n_1+1,l_1^{(1)}+\cdots l_1^{(d)}}.
\end{align}
Applying the recursion
\[\binom{k}{l_1,\ldots,l_t}=\binom{k-1}{l_1-1,l_2,\ldots,l_t}+\cdots+\binom{k-1}{l_1,\ldots,l_{t-1},l_t-1},\quad \binom{m+1}{n}=\binom{m}{n}+\binom{m}{n-1}\]
to \eqref{eq:lq_deriv_-1} yields together with variable substitutions $l_i^{(t)}-1\to l_i^{(t)}$ and $n_t-1\to n_t$
\begin{align} \label{eq:lq_deriv-2}
&\sum_{\substack{l_1^{(s)}+\cdots+l_s^{(s)}=k_s-1,\ s=1,\ldots,d \\ n_1+\cdots+n_{d-1}+n=m_1+\cdots+m_d}}  \prod_{s=1}^d \binom{k_s-1}{l_1^{(s)},\ldots,l_s^{(s)}} \binom{m_s}{n_s}(-1)^{m_s+n_s}\\
&\Bigg(\sum_{t=1}^dD_{m_d+m_{d-1}-n_{d-1}+1,l_d^{(d)}}D_{n_{d-1}+m_{d-2}-n_{d-2}+1,l_{d-1}^{(d-1)}+l_{d-1}^{(d)}}\cdots D_{n_{t+1}+m_t-n_t+1,l_{t+1}^{(t+1)}+\cdots+l_{t+1}^{(d)}} \nonumber \\
&\hspace{4cm} D_{n_t+m_{t-1}-n_{t-1}+2,l_t^{(t)}+\cdots+l_t^{(d)}+1}\cdots D_{n_2+m_1-n_1+1,l_2^{(2)}+\cdots+l_2^{(d)}}D_{n_1+1,l_1^{(1)}+\cdots l_1^{(d)}} \nonumber \\
&-\sum_{t=1}^{d-1}\sum_{i=1}^t D_{m_d+m_{d-1}-n_{d-1}+1,l_d^{(d)}}D_{n_{d-1}+m_{d-2}-n_{d-2}+1,l_{d-1}^{(d-1)}+l_{d-1}^{(d)}}\cdots D_{n_{t+1}+m_t-n_t+2,l_{t+1}^{(t+1)}+\cdots+l_{t+1}^{(d)}} \nonumber \\
&\hspace{3cm} \cdots D_{n_i+m_{i-1}-n_{i-1}+1,l_i^{(i)}+\cdots+l_i^{(d)}+1}\cdots D_{n_2+m_1-n_1+1,l_2^{(2)}+\cdots+l_2^{(d)}}D_{n_1+1,l_1^{(1)}+\cdots l_1^{(d)}} \nonumber \\
&+\sum_{t=2}^d\sum_{i=1}^{t-1} D_{m_d+m_{d-1}-n_{d-1}+1,l_d^{(d)}}D_{n_{d-1}+m_{d-2}-n_{d-2}+1,l_{d-1}^{(d-1)}+l_{d-1}^{(d)}}\cdots D_{n_t+m_{t-1}-n_{t-1}+2,l_t^{(t)}+\cdots+l_t^{(d)}} \nonumber \\
&\hspace{3cm} \cdots D_{n_i+m_{i-1}-n_{i-1}+1,l_i^{(i)}+\cdots+l_i^{(d)}+1}\cdots D_{n_2+m_1-n_1+1,l_2^{(2)}+\cdots+l_2^{(d)}}D_{n_1+1,l_1^{(1)}+\cdots l_1^{(d)}}\Bigg). \nonumber
\end{align}
The latter two sums in \eqref{eq:lq_deriv-2} form a telescoping sum, thus only the first terms survives, which is equal to
$\delta\big(\tau_{\Dbi}(D_{k_1,m_1}\cdots D_{k_d,m_d})\big)$. Thus, we showed that
\[(\tau_{\Dbi}\circ \delta)(D_{k_1,m_1}\cdots D_{k_d,m_d}) =(\delta\circ\tau_{\Dbi})(D_{k_1,m_1}\cdots D_{k_d,m_d}).\]
Since both $\tau_{\Dbi}$ and $\delta$ are $\QQ$-linear maps, the claim follows.
\end{proof}

As part (ii) of Theorem \ref{main_thm}, we have the following.

\begin{thm} \label{thm:lq_deriv} The tuple $(\lqq,\ariemp,\delta)$ is a differential Lie algebra.
\end{thm}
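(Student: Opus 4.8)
We must show that $\delta$ is a derivation of the Lie algebra $(\lqq,\ariemp)$, i.e.\ that $\delta$ preserves the subspace $\lqq\subset\QDbi$ and satisfies the Leibniz rule
\[
\delta\big(\ari{f}{g}\big)=\ari{\delta(f)}{g}+\ari{f}{\delta(g)}
\]
for all $f,g\in\lqq$. Since the abstract claim that $\delta$ is "non-trivial and increases the weight by $2$'' is immediate from Definition \ref{def:deriv} (it sends $D_{k,m}$ to $D_{k+1,m+1}$, which raises $\wt=k+m$ by $2$ and is visibly nonzero), the real content is these two structural assertions.

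\medskip

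**Step 1: $\delta$ preserves $\lqq$.** I would use the characterization of $\lqq$ from Lemma \ref{lem:lqq_in_Dbi}, namely $\Phi\in\LieDbi$ subject to conditions (ii') and (iv'). That $\delta$ preserves $\LieDbi$ is clear, as $\delta$ is a derivation for the commutator bracket by definition, hence sends Lie elements to Lie elements. For condition (iv'), the $\tau_{\Dbi}$-invariance, I would invoke Proposition \ref{prop:d_tau_commute}: if $\tau_{\Dbi}(\Phi)=\Phi$ then $\tau_{\Dbi}(\delta(\Phi))=\delta(\tau_{\Dbi}(\Phi))=\delta(\Phi)$, so invariance is preserved. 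For condition (ii'), the vanishing of depth-$1$ coefficients in even weight, I would argue directly: the depth-$1$ part of $\delta(\Phi)$ is governed by the depth-$1$ part of $\Phi$, since $\delta$ is a derivation and applying it to a bracket of depth $\geq2$ keeps depth $\geq2$; on a single letter $\delta(D_{k,m})=D_{k+1,m+1}$ preserves the parity of $k+m$, so a vanishing even-weight depth-$1$ coefficient of $\Phi$ forces the same for $\delta(\Phi)$. Thus all defining conditions are stable under $\delta$.

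\medskip

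**Step 2: the Leibniz rule.** This is the heart of the proof. Because the bracket $\ariemp$ decomposes as $\ari{f}{g}=\der{f}(g)-\der{g}(f)+[f,g]$, I would reduce the Leibniz identity for $\ariemp$ to three pieces: a Leibniz-type compatibility of $\delta$ with the ordinary commutator $[-,-]$ (which holds since $\delta$ is by definition a derivation for concatenation, hence for the commutator), and a compatibility of $\delta$ with the "action'' map $w\mapsto\der{w}$. Concretely, the key computational lemma I would aim to establish is the commutator-style relation
\[
\delta\big(\der{f}(g)\big)=\der{\delta(f)}(g)+\der{f}\big(\delta(g)\big),
\]
which, once combined with $\delta([f,g])=[\delta(f),g]+[f,\delta(g)]$, yields the full Leibniz rule for $\ariemp$ by linearity. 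To prove this relation I would work on generators using the explicit formula \eqref{eq:def_Liebracket_lq} for $\der{w}(D_{i,n})$, apply $\delta$ term by term, and match the result against the two expressions $\der{\delta(w)}(D_{i,n})$ and $\der{w}(\delta(D_{i,n}))$. The matching hinges on how $\delta$ interacts with the index $l$ (the "weight shift'' in $D_{i+l,p}$) and with the $\ad(b_0)$-exponents $m_s$; I expect the same telescoping/binomial recursion identities used in the proof of Proposition \ref{prop:d_tau_commute} (the Pascal relations $\binom{m+1}{n}=\binom{m}{n}+\binom{m}{n-1}$ and the multinomial analogue) to reorganize the two target sums so that their combination reproduces $\delta(\der{w}(D_{i,n}))$.

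\medskip

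**Main obstacle.** The delicate point is the bookkeeping in Step 2: the derivation $\delta$ raises both the "$k$''-index and the "$m$''-index of every letter by one simultaneously, so when applied inside the convolution-type sum defining $\der{w}$, it produces boundary terms that must cancel in pairs. Verifying that $\der{\delta(w)}+\der{w}\circ\delta$ accounts for exactly these terms—neither more nor fewer—is the crux, and I anticipate it works by the same telescoping mechanism that already appears in Proposition \ref{prop:d_tau_commute}. An alternative, cleaner route worth attempting first is to exploit the post-Lie structure of Proposition \ref{prop:ari_post_lie}: if one can show $\delta$ is a derivation simultaneously for the commutator $[-,-]$ and for the magmatic product $\der{-}(-)$, then it is automatically a derivation for the derived Lie bracket $\ariemp$, reducing the whole problem to the single generator-level identity above. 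This is the formulation I would pursue, as it isolates the combinatorial core and keeps the argument conceptual.
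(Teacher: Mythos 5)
Your proposal follows the paper's proof essentially step for step: the paper likewise checks preservation of $\LieDbi$ and condition (ii') directly, handles (iv') via Proposition \ref{prop:d_tau_commute}, and reduces the Leibniz rule for $\ariemp$ to exactly your key identity $\delta\big(\der{f}(g)\big)=\der{\delta(f)}(g)+\der{f}(\delta(g))$ together with the trivial compatibility of $\delta$ with $[-,-]$, proving the key identity on generators $f=D_{k_1,m_1}\cdots D_{k_d,m_d}$, $g=D_{i_1,n_1}\cdots D_{i_r,n_r}$ via the explicit formula \eqref{eq:def_Liebracket_lq} and Pascal-type binomial recursions in which the boundary terms cancel between the two halves of the computation, just as you anticipate. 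The only part you leave unexecuted is that explicit binomial bookkeeping, which is precisely the content of the paper's displays \eqref{eq:lq_deriv1}--\eqref{eq:lq_deriv3}.
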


\begin{proof} Evidently, the derivation $\delta$ preserves the space $\LieDbi$ and the condition (ii') from Lemma \ref{lem:lqq_in_Dbi}. By Proposition \ref{prop:d_tau_commute}, $\delta$ also preserves the $\tau_{\Dbi}$-invariance given in in (iv'). 
We verify the compatibility of the Lie bracket $\ariemp$ and the derivation $\delta$,
\begin{align} \label{eq:lq_deriv_liebrack}
\delta\big(\ari{f}{g}\big)=\ari{\delta(f)}{g}+\ari{f}{\delta(g)},\qquad f,g\in\QDbi.
\end{align} 
By $\QQ$-linearity, we can assume that $f=D_{k_1,m_1}\cdots D_{k_d,m_d},\ g=D_{i_1,n_1}\cdots D_{i_r,n_r}$. Then, we have 
\begin{align} \label{eq:lq_deriv1}
&\delta\big(\der{f}(g)\big)= \sum_{\substack{s,t=1 \\ s\neq t}}^r D_{i_1,n_1}\cdots \der{f}(D_{i_s,n_s})\cdots \delta(D_{i_t,n_t})\cdots D_{i_r,n_r} \\
\label{eq:lq_deriv2}
&+\sum_{s=1}^r\sum_{\substack{l_1+\cdots+l_d+l=k_1+\cdots+k_d \\ p_1+\cdots+p_d+p=n_s}} (-1)^l\binom{k_1-1}{l_1-1}\cdots \binom{k_d-1}{l_d-1} \binom{n_s}{p_1,\ldots,p_d,p}  \\
&\hspace{2.4cm} D_{i_1,n_1}\cdots D_{i_{s-1},n_{s-1}}[D_{i_s+1+l,p+1},D_{l_1,m_1+p_1}\cdots D_{l_d,m_d+p_d}]D_{i_{s+1},n_{s+1}}\cdots D_{i_r,n_r}\big)  \nonumber \\
\label{eq:lq_deriv3}
&+\sum_{s=1}^r \sum_{t=1}^d\sum_{\substack{l_1+\cdots+l_d+l=k_1+\cdots+k_d \\ p_1+\cdots+p_d+p=n_s}} (-1)^l\binom{k_1-1}{l_1-1}\cdots\binom{k_d-1}{l_d-1} \binom{n_s}{p_1,\ldots,p_d,p} \\
&D_{i_1,n_1}\cdots D_{i_{s-1},n_{s-1}}[D_{i_s+l,p},D_{l_1,m_1+p_1}\cdots D_{l_t+1,m_t+1+p_1} \cdots D_{l_d,m_d+p_d}]D_{i_{s+1},n_{s+1}}\cdots D_{i_r,n_r}. \nonumber
\end{align}
For the third sum \eqref{eq:lq_deriv3}, we observe that it is equal to
\begin{align*}
&\sum_{s=1}^r \sum_{t=1}^d\sum_{\substack{l_1+\cdots+l_d+l=k_1+\cdots+k_d+1 \\ p_1+\cdots+p_d+p=n_s}} (-1)^l\binom{k_1-1}{l_1-1}\cdots \binom{k_t-1}{l_t-2}\cdots \binom{k_d-1}{l_d-1} \binom{n_s}{p_1,\ldots,p_d,p} \\
&D_{i_1,n_1}\cdots D_{i_{s-1},n_{s-1}}[D_{i_s+l,p},D_{l_1,m_1+p_1}\cdots D_{l_t,m_t+1+p_1} \cdots D_{l_d,m_d+p_d}]D_{i_{s+1},n_{s+1}}\cdots D_{i_r,n_r}\\
&=\sum_{t=1}^d\sum_{s=1}^r\sum_{\substack{l_1+\cdots+l_d+l=k_1+\cdots+k_d+1 \\ p_1+\cdots+p_d+p=n_s}} (-1)^l\binom{k_1-1}{l_1-1}\cdots \binom{k_t}{l_t-1}\cdots \binom{k_d-1}{l_d-1} \binom{n_s}{p_1,\ldots,p_d,p} \\
&D_{i_1,n_1}\cdots D_{i_{s-1},n_{s-1}}[D_{i_s+l,p},D_{l_1,m_1+p_1}\cdots D_{l_t,m_t+1+p_1} \cdots D_{l_d,m_d+p_d}]D_{i_{s+1},n_{s+1}}\cdots D_{i_r,n_r} \\
&-\sum_{s=1}^r \sum_{t=1}^d\sum_{\substack{l_1+\cdots+l_d+l=k_1+\cdots+k_d \\ p_1+\cdots+p_d+p=n_s}} (-1)^{l+1}\binom{k_1-1}{l_1-1}\cdots \binom{k_t-1}{l_t-1}\cdots \binom{k_d-1}{l_d-1} \binom{n_s}{p_1,\ldots,p_d,p} \\
&D_{i_1,n_1}\cdots D_{i_{s-1},n_{s-1}}[D_{i_s+l+1,p},D_{l_1,m_1+p_1}\cdots D_{l_t,m_t+1+p_1} \cdots D_{l_d,m_d+p_d}]D_{i_{s+1},n_{s+1}}\cdots D_{i_r,n_r} \\
&=\der{\delta(f)}(g)+\sum_{s=1}^r \sum_{t=1}^dD_{i_1,n_1}\cdots D_{i_{s-1},n_{s-1}} \der{D_{k_1,m_1}\cdots D_{k_t,m_t+1}\cdots D_{k_d,m_d}}(D_{i_s+1,n_s} )D_{i_{s+1},n_{s+1}}\cdots D_{i_r,n_r}.
\end{align*}
Next, we observe that the first and second sum \eqref{eq:lq_deriv1} and \eqref{eq:lq_deriv2} are equal to
\begin{align*}
&\sum_{\substack{s,t=1 \\ s\neq t}}^r D_{i_1,n_1}\cdots \der{f}(D_{i_s,n_s})\cdots \delta(D_{i_t,n_t})\cdots D_{i_r,n_r} \\
&+\sum_{s=1}^r\sum_{\substack{l_1+\cdots+l_d+l=k_1+\cdots+k_d \\ p_1+\cdots+p_d+p=n_s+1}} (-1)^l\binom{k_1-1}{l_1-1}\cdots \binom{k_d-1}{l_d-1} \binom{n_s}{p_1,\ldots,p_d,p-1}  \\
&\hspace{2.4cm} D_{i_1,n_1}\cdots D_{i_{s-1},n_{s-1}}[D_{i_s+1+l,p},D_{l_1,m_1+p_1}\cdots D_{l_d,m_d+p_d}]D_{i_{s+1},n_{s+1}}\cdots D_{i_r,n_r}\big)\\
&=\sum_{\substack{s,t=1 \\ s\neq t}}^r D_{i_1,n_1}\cdots \der{f}(D_{i_s,n_s})\cdots \delta(D_{i_t,n_t})\cdots D_{i_r,n_r} \\
&+\sum_{s=1}^r\sum_{\substack{l_1+\cdots+l_d+l=k_1+\cdots+k_d \\ p_1+\cdots+p_d+p=n_s+1}} (-1)^l\binom{k_1-1}{l_1-1}\cdots \binom{k_d-1}{l_d-1} \binom{n_s+1}{p_1,\ldots,p_d,p}  \\
&\hspace{2.4cm} D_{i_1,n_1}\cdots D_{i_{s-1},n_{s-1}}[D_{i_s+1+l,p},D_{l_1,m_1+p_1}\cdots D_{l_d,m_d+p_d}]D_{i_{s+1},n_{s+1}}\cdots D_{i_r,n_r}\big)\\
&-\sum_{s=1}^r \sum_{t=1}^d\sum_{\substack{l_1+\cdots+l_d+l=k_1+\cdots+k_d \\ p_1+\cdots+p_d+p=n_s}} (-1)^l\binom{k_1-1}{l_1-1}\cdots \binom{k_d-1}{l_d-1} \binom{n_s}{p_1,\ldots,p_d,p}\\ 
&\hspace{1.4cm} D_{i_1,n_1}\cdots D_{i_{s-1},n_{s-1}}[D_{i_s+1+l,p},D_{l_1,m_1+p_1}\cdots D_{l_t,m_t+1+p_t}\cdots D_{l_d,m_d+p_d}]D_{i_{s+1},n_{s+1}}\cdots D_{i_r,n_r}\big)\\
&=\der{f}(\delta(g))-\sum_{s=1}^r \sum_{t=1}^d D_{i_1,n_1}\cdots D_{i_{s-1},n_{s-1}}\der{D_{k_1,m_1}\cdots D_{k_t,m_t+1}\cdots D_{k_d,m_d}}(D_{i_s+1,n_s})D_{i_{s+1},n_{s+1}}\cdots D_{i_r,n_r}.
\end{align*}
Both calculations together show that
\begin{align} \label{eq:lq_deriv_deriv}
\delta\big(\der{f}(g)\big)=\der{\delta(f)}(g)+\der{f}(\delta(g)).
\end{align}
Since $\delta$ is a derivation, we have
\begin{align} \label{eq:lq_deriv_comm}
\delta\big([f,g]\big)=[\delta(f),g]+[f,\delta(g)].
\end{align}
From \eqref{eq:lq_deriv_deriv} and \eqref{eq:lq_deriv_comm}, we derive the claimed identity \eqref{eq:lq_deriv_liebrack}.
\end{proof}

\section{The linearized double shuffle Lie algebra $\ls$ and the Lie algebra $\lqq$} \label{sec:ls_and_lq}

We first review  briefly the linearized double shuffle Lie algebra $\ls$ associated to the algebra $\Z$ of multiple zeta values. For more details we refer to \cite{Bro21}, \cite{IKZ06}, \cite{Sc15}.

\vspace{0.3cm}
Consider the alphabet $\X=\{x_0,x_1\}$, and denote by $\QX$ the free non-commutative algebra generated by $\X$. With the shuffle coproduct \begin{align} \label{eq:shco_QX}
\shco:\QX\to \QX\otimes\QX,\ x_i\mapsto x_i\otimes 1 +1 \otimes x_i, \qquad i\in\{0,1\},
\end{align}
we get a Hopf algebra structure $(\QX,\cdot,\shco)$. For a word $w=x_{s_1}\cdots x_{s_k}\in \QX$ define the \emph{weight} and \emph{depth} as
\begin{align*}
\wt(w)=k,\qquad \dep(w)=\#\{i\mid s_i=1\}.
\end{align*}
These two notions extend to gradings on the Hopf algebra $(\QX,\cdot,\shco)$.

Next, consider the alphabet $\Y=\{y_1,y_2,\ldots\}$, and similarly denote by $\QY$ the free non-commutative algebra generated by $\Y$. Also in this case we get a Hopf algebra structure $(\QY,\cdot,\shco)$ where the shuffle coproduct is given as before by 
\begin{align} \label{eq:shco_QY}
\shco(y_i)=y_i\otimes 1 +1\otimes y_i,\qquad i\geq1.
\end{align}

We have a canonical projection from $\QX$ to $\QY$ given by
\begin{align} \label{eq:piY}
\piY:\QX\to \QY,\quad  x_0^{k_1-1}x_1\cdots x_0^{k_d-1}x_1x_0^{n}\mapsto\begin{cases} y_{k_1}\cdots y_{k_d}, &\quad n=0,\\ 0 &\quad \text{else}. \end{cases}
\end{align}

\begin{defi} \label{def:ls} Let $\ls$ be the space of all $\phi\in \QX$ such that
\begin{flalign*}
\qquad \text{(i)} \quad & (\phi\mid x_0)=(\phi\mid x_1)=0, &&\\
\qquad \text{(ii)} \quad & (\phi\mid x_0^{m-1}x_1)=0, \qquad m \text{ even}, &&\\
\qquad \text{(iii)} \quad & \shco\phi=\phi\otimes 1+1\otimes \phi, &&\\
\qquad \text{(iv)} \quad & \shco\piY(\phi)=\piY(\phi)\otimes 1+1\otimes\piY(\phi).
\end{flalign*}
\end{defi}

These conditions are equivalent to the extended double shuffle relations in the algebra $\Z$ of multiple zeta values modulo products, lower depths, and $\zeta(2)$.
The space $\ls$ is bi-graded with respect to the weight and depth
\[\ls=\bigoplus_{k,d\geq0} \ls_{k,d}.\]
Similar to Theorem \ref{thm:lq_parity}, we have the following.

\begin{thm} \label{thm:ls_parity} \cite[Theorem 7]{IKZ06} If $k\not\equiv d\mod 2$, then we have
\[\ls_{k,d}=\{0\}.\]
\end{thm}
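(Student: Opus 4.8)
The plan is to reduce the statement to a parity property of polynomials, in exact parallel with the proof of Theorem \ref{thm:lq_parity}. Fix a nonzero $\phi\in\ls_{k,d}$; we may assume $d\ge1$. Encode the depth-$d$ part of $\phi$ by the generating polynomial
\[ V_\phi(t_1,\dots,t_d)=\sum_{a_1,\dots,a_d\ge0}\big(\phi\mid x_0^{a_1}x_1\cdots x_0^{a_d}x_1\big)\,t_1^{a_1}\cdots t_d^{a_d}, \]
whose indexing runs over the depth-$d$ words ending in $x_1$. Since $\wt=d+\sum_i a_i$ is fixed equal to $k$, the polynomial $V_\phi$ is homogeneous of degree $k-d$. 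The first input is the standard fact from mould theory (\cite{Ec11,Sc15}) that the assignment $\phi\mapsto V_\phi$ is injective on Lie elements of a fixed depth, i.e. a Lie polynomial is recovered from its coefficients on words ending in $x_1$. As condition (iii) guarantees $\phi\in\operatorname{Lie}\langle\X\rangle$, it suffices to prove $V_\phi=0$ whenever $k\not\equiv d\bmod2$.

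Next I would extract two symmetries of $V_\phi$ from the two primitivity conditions. From condition (iv): since $\piY(\phi)$ is a length-$d$ primitive element for $\shco$ on $\QY$, the antipode relation $S(\piY(\phi))=-\piY(\phi)$ together with $S(y_{c_1}\cdots y_{c_d})=(-1)^dy_{c_d}\cdots y_{c_1}$ gives $(\piY(\phi)\mid y_{c_1}\cdots y_{c_d})=(-1)^{d-1}(\piY(\phi)\mid y_{c_d}\cdots y_{c_1})$; translating through $\piY(x_0^{a_1}x_1\cdots x_0^{a_d}x_1)=y_{a_1+1}\cdots y_{a_d+1}$ this is exactly the reversal symmetry $V_\phi(t_1,\dots,t_d)=(-1)^{d-1}V_\phi(t_d,\dots,t_1)$. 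Condition (iii), that $\phi$ itself is primitive for $\shco$ on $\QX$, is an alternality-type constraint relating the coefficients on words ending in $x_1$ to those ending in $x_0$; in the dictionary of \cite{Sc15} it is the alternality of the mould attached to $\phi$, while the reversal symmetry above is its swap-invariance. The crux is to combine these: an alternal and swap-invariant mould is \emph{push-invariant}, and push-invariance of a homogeneous component forces the reflection $t_i\mapsto -t_i$ to act trivially on $V_\phi$. Since $V_\phi$ is homogeneous of degree $k-d$, this reflection also multiplies it by $(-1)^{k-d}$, whence $(-1)^{k-d}=1$ and $k\equiv d\bmod2$.

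The main obstacle is precisely this last step — tracking how alternality in the $t$-variables and the reversal symmetry coming from $\piY$ interact through Ecalle's swap to yield push/parity invariance, with the correct sign bookkeeping. This is exactly the content of \cite[Theorem 7]{IKZ06} and is the one-variable specialization of the bimould parity statement underlying Theorem \ref{thm:lq_parity}. I would therefore either invoke \cite[Theorem 7]{IKZ06} directly, or, once the embedding $\theta:\ls\hookrightarrow\lqq$ of Theorem \ref{main_thm}(3) is available, argue more cheaply: $\theta$ respects both the weight and the depth, so $\ls_{k,d}\hookrightarrow\lqq_{k,d}$, and Theorem \ref{thm:lq_parity} gives $\lqq_{k,d}=\{0\}$ for $k\not\equiv d\bmod2$, hence $\ls_{k,d}=\{0\}$.
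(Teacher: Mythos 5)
The paper offers no proof of Theorem \ref{thm:ls_parity} at all: the statement is quoted from \cite[Theorem 7]{IKZ06}, so your fallback of invoking that reference directly is exactly what the paper does, and to that extent your proposal is fine. Your sketched direct argument, however, is imprecise at its hinge. The mould attached to $\phi$ via $V_\phi$ is not ``swap invariant'' --- swap invariance is a feature of the bimould setting for $\lqq$ --- and the reversal symmetry $V_\phi(t_1,\ldots,t_d)=(-1)^{d-1}V_\phi(t_d,\ldots,t_1)$ you extract from the antipode is strictly weaker than what condition (iv) actually provides, namely full alternality of the swapped mould: primitivity of $\piY(\phi)$ kills \emph{all} shuffle products, not just the one encoding reversal. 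Reversal alone cannot force evenness (it is vacuous for $d=1$, where parity instead comes from condition (ii), and it is far from sufficient for $d\geq2$). The correct input is bialternality --- the mould and its swap are both alternal --- after which \cite[Lemma 2.5.5]{Sc15}, the same lemma behind Proposition \ref{prop:bari_parity}, yields invariance under $t_i\mapsto -t_i$ and hence $k\equiv d\bmod 2$; this is indeed the content of \cite[Theorem 7]{IKZ06}.

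Your ``cheaper'' route through $\theta$ has a circularity hazard you must flag: as the paper is organized, the proof that $\theta$ is a Lie algebra morphism rests on Proposition \ref{prop:thetaY_Lie_bracks}, whose proof itself invokes Theorem \ref{thm:ls_parity}, so you may not cite Theorem \ref{main_thm}(3) wholesale. The route is nevertheless salvageable, because you only need $\theta$ as a weight- and depth-preserving injective \emph{linear} map: the portion of the proof of Theorem \ref{thm:emb_ls_lq} establishing $\theta(\phi)\in\lqq$ (conditions (i)--(iv), via Lemma \ref{lem:thetas_proj}) is parity-free, injectivity is elementary (for $k>d$ every word of $\thetaX(\phi)$ contains $b_0$ while no word of $\thetaY(\piY(\phi))$ does, and the case $k=d$ is trivial for Lie elements), and Theorem \ref{thm:lq_parity} depends only on the bimould results of Section \ref{sec:bimoulds}. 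With that dependency check made explicit, $\ls_{k,d}\hookrightarrow\lqq_{k,d}=\{0\}$ is a valid internal proof, and arguably more self-contained than the bare citation.
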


\begin{defi} \label{def:Ihara_bracket} For any $w\in \QX$, let $d_w:\QX\to \QX$ be the derivation given by
\[d_w(x_0)=0,\quad d_w(x_1)=[x_1,w].\]
The \emph{Ihara bracket} on $\QX$ is defined by
\[\{f,g\}=d_f(g)-d_g(f)+[f,g],\qquad f,g\in \QX.\]
\end{defi}

\begin{thm} \cite[Theorem 5.5]{Bro21} The pair $(\ls,\{-,-\})$ is a weight- and depth-graded Lie algebra.
\end{thm}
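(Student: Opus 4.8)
The plan is to follow the pattern of the proof of Theorem~\ref{thm:lqq_Lie_alg}: first show that the Ihara bracket already makes the ambient free Lie algebra $\operatorname{Lie}\langle\X\rangle$ into a Lie algebra, then check that each of the four conditions defining $\ls$ in Definition~\ref{def:ls} is preserved by the bracket, and finally read off gradedness from the defining formulas. For the ambient structure, the decisive point is the identity $[d_f,d_g]=d_{\{f,g\}}$ for $f,g\in\operatorname{Lie}\langle\X\rangle$, where $d_w$ is the derivation of Definition~\ref{def:Ihara_bracket}. One verifies it on generators: both sides annihilate $x_0$, while on $x_1$ one computes $[d_f,d_g](x_1)=[[x_1,f],g]-[[x_1,g],f]+[x_1,d_f(g)-d_g(f)]$, and the Jacobi identity collapses the first two terms to $[x_1,[f,g]]$, giving $[x_1,\{f,g\}]$. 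This is precisely the post-Lie compatibility for $(\operatorname{Lie}\langle\X\rangle,[-,-],d_{(-)})$, the $\ls$-analogue of Proposition~\ref{prop:ari_post_lie}, and it supplies anti-symmetry and the Jacobi identity for $\{-,-\}$ at no extra cost.

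Conditions (i)--(iii) are then the easy part, exactly as in Theorem~\ref{thm:lqq_Lie_alg}. Condition (iii) asks that $\phi$ be a Lie element; since $d_f$ preserves $\operatorname{Lie}\langle\X\rangle$ and the commutator of Lie elements is again a Lie element, $\{f,g\}\in\operatorname{Lie}\langle\X\rangle$, so (iii) is stable. For (i) and (ii), note that $d_f$ increases depth by $\dep(f)$ and $[-,-]$ is additive in depth; since every nonzero homogeneous element of $\ls$ has depth $\geq 1$ (a homogeneous Lie element of depth $0$ lies in $\operatorname{Lie}\langle x_0\rangle=\QQ x_0$, which is excluded by (i)), the bracket of two such elements lands in depth $\geq 2$ and therefore has no depth-$1$, hence no weight-$1$, words at all. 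Thus conditions (i) and (ii), which only constrain weight-$1$ and even-weight depth-$1$ coefficients, hold vacuously on $\{f,g\}$.

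The genuine obstacle is condition (iv): one must show that $\piY(\{f,g\})$ is primitive for $\shco$ on $\QY$ whenever $\piY(f)$ and $\piY(g)$ are. This is the exact counterpart of the $\tau$-invariance in Theorem~\ref{thm:lqq_Lie_alg}, and the difficulty has the same source: the projection $\piY$, which deletes words not ending in $x_1$ and relabels $x_0^{k-1}x_1\mapsto y_k$, does not intertwine the Ihara bracket with any manifest bracket on $\QY$, because it mixes the shuffle (integral) structure on $\QX$ with the stuffle (sum) structure on $\QY$. I would resolve this exactly as Proposition~\ref{prop:dA0_and_tau} through Corollary~\ref{cor:Lie_brack_tau_inv} resolve the $\lqq$ case: split $d_w$ into a right and a left derivation (the analogue of \eqref{eq:dA_in_dAl_dAr}), track how each transforms under the antipode/duality governing stuffle-primitivity on $\QY$, and then combine the two contributions using the parity constraint $\ls_{k,d}=0$ for $k\not\equiv d\mod 2$ from Theorem~\ref{thm:ls_parity} (playing the role of the $\rho$-invariance of Proposition~\ref{prop:lq_rho_inv}). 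Under the correspondence of Section~\ref{sec:bimoulds} this step is identical to the assertion that the ari bracket preserves swap invariance, i.e.\ to \cite[Theorem~2.5.6]{Sc15}; the overall argument is Brown's \cite[Theorem~5.5]{Bro21}.

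Finally, gradedness is immediate: by Definition~\ref{def:Ihara_bracket} both $d_w$ and the commutator are homogeneous in weight and depth, so $\{-,-\}$ restricts to maps $\ls_{k,d}\times\ls_{k',d'}\to\ls_{k+k',d+d'}$, and $(\ls,\{-,-\})$ is a weight- and depth-graded Lie algebra. The main work is thus concentrated in condition (iv); everything else is formal or a direct transcription of the $\lqq$ arguments already carried out.
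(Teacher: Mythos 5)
The paper never proves this statement---it is quoted verbatim from \cite[Theorem 5.5]{Bro21} with no argument given---so the only internal material to compare against is the parallel development for $\lqq$ (Theorem \ref{thm:lqq_Lie_alg}), which is indeed the template you follow. The parts you actually carry out are correct: the generator computation $[d_f,d_g](x_1)=[[x_1,f],g]-[[x_1,g],f]+[x_1,d_f(g)-d_g(f)]=[x_1,\{f,g\}]$ is right, and since $w\mapsto d_w$ is linear and each $d_w$ is a derivation of the commutator, this is equivalent to the post-Lie axiom for $(\operatorname{Lie}\langle\X\rangle,[-,-],d)$, the $\ls$-analogue of Proposition \ref{prop:ari_post_lie}, so antisymmetry and Jacobi for $\{-,-\}$ do come for free. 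Your depth argument for (i) and (ii) is also sound (the depth-$0$ part of a shuffle-primitive element lies in $\QQ x_0$, which condition (i) kills, so brackets of elements of $\ls$ live in depth $\geq 2$), as are (iii) and the bigradedness.

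The one substantive incompleteness is exactly where you place it: condition (iv) is only reduced by analogy, and citing \cite[Theorem 5.5]{Bro21} for it is circular, since that is the statement being proven; there is also a slip in your mould-side translation---for $\ls$ the relevant assertion is that the ari bracket preserves \emph{bialternality} (alternality of the swap, i.e.\ $\operatorname{ARI}_{\underline{\operatorname{al}}/\underline{\operatorname{al}}}$), not swap invariance, which is the $\lqq$/$\BARI$ condition. More importantly, you overlook that the paper's own results already discharge your ``genuine obstacle'' with no new antipode computations: for $\phi,\psi\in\ls$ one has $\theta(\phi),\theta(\psi)\in\lqq$ (the well-definedness part of the proof of Theorem \ref{thm:emb_ls_lq}, which uses only the defining conditions of $\ls$), and Propositions \ref{prop:thetaX_Lie_bracks}, \ref{prop:thetaY_Lie_bracks}, \ref{prop:thetaXY_ari} give $\ari{\theta(\phi)}{\theta(\psi)}=\theta(\{\phi,\psi\})$ without assuming $\ls$ is closed under the bracket. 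By Theorem \ref{thm:lqq_Lie_alg} this element lies in $\lqq$, hence is $\shco$-primitive; subtracting the primitive element $\thetaX(\{\phi,\psi\})$ (the bracket $\{\phi,\psi\}$ is a Lie polynomial and $\thetaX$ a coalgebra morphism) shows $\thetaY(\piY(\{\phi,\psi\}))$ is primitive, and since $\thetaY$ is an injective coalgebra morphism, $\piY(\{\phi,\psi\})$ is primitive in $\QY$---which is precisely condition (iv). So within this paper the theorem follows as a corollary of what is already proved; a self-contained proof along your left/right-derivation-plus-parity outline is viable but duplicates machinery the paper has already built.
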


For the double shuffle Lie algebra $(\dm,\{-,-\})$ introduced and studied in \cite{Ra00}, we expect to have an isomorphism
\[\Z/(\zeta(2))\overset{?}{\simeq} \mathcal{U}(\dm)^\vee.\] 
By \cite{Ra00}, this isomorphism is implied by the main conjecture for multiple zeta values that all algebraic relations are induced by the extended double shuffle relations.
Moreover by construction, the associated depth-graded $\grD \dm$ embeds into the linearized double shuffle Lie algebra $\ls$. In contrast to the case of multiple zeta values, see Remark \ref{rem:lq_grDZ_q_not_inj}, it is conjectured in \cite{IKZ06} that this morphism is an isomorphism
\begin{align} \label{eq:grDdm0_ls_iso}
\grD \dm \overset{?}{\simeq} \ls.
\end{align}
Therefore, the following is expected for the associated depth-graded algebra $\grDZ$.

\begin{con} \label{conj:mzv_depthgraded_iso} There is an algebra isomorphism
\[\grDZ/(\zeta(2))\simeq \mathcal{U}(\ls)^\vee.\]
\end{con}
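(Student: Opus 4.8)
The plan is to obtain Conjecture~\ref{conj:mzv_depthgraded_iso} as the depth-graded shadow of the two conjectural isomorphisms already flagged in the text: the ungraded comparison $\Z/(\zeta(2))\simeq\mathcal{U}(\dm)^\vee$ (implied by the main conjecture for multiple zeta values via \cite{Ra00}) and the linearization $\grD\dm\simeq\ls$ from \eqref{eq:grDdm0_ls_iso}. Concretely, I would filter the ungraded comparison map by depth, pass to the associated graded, commute $\grD$ through the enveloping-algebra and graded-dual functors, and finally substitute $\grD\dm\simeq\ls$.

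First I would fix compatible depth filtrations. The left-hand side $\grDZ$ is by definition the associated graded of $\Z$ for the depth filtration $\fil_d$, descended modulo the ideal $(\zeta(2))$. On the dual side, $\dm$ carries an increasing depth filtration with $[\fil_i,\fil_j]\subseteq\fil_{i+j}$ for the Ihara bracket, so $\mathcal{U}(\dm)$ inherits a ring filtration and the standard PBW comparison for filtered Lie algebras gives a graded Hopf-algebra isomorphism $\grD\mathcal{U}(\dm)\simeq\mathcal{U}(\grD\dm)$. Since the weight grading is preserved at every stage and $(-)^\vee$ is the graded dual for weight, dualizing yields $\grD\bigl(\mathcal{U}(\dm)^\vee\bigr)\simeq\mathcal{U}(\grD\dm)^\vee$. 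The crucial hypothesis here is that the ungraded isomorphism is \emph{strict} for the depth filtration, so that it induces an isomorphism on each associated-graded piece; verifying strictness amounts to matching $\dim\bigl(\grDZ/(\zeta(2))\bigr)_{k,d}$ against $\dim\ls_{k,d}$ in every bidegree.

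Applying the (contravariant, but isomorphism-preserving) functor $\mathcal{U}(-)^\vee$ to \eqref{eq:grDdm0_ls_iso} then gives $\mathcal{U}(\grD\dm)^\vee\simeq\mathcal{U}(\ls)^\vee$, and chaining the three identifications
\[\grDZ/(\zeta(2))\;\simeq\;\grD\bigl(\mathcal{U}(\dm)^\vee\bigr)\;\simeq\;\mathcal{U}(\grD\dm)^\vee\;\simeq\;\mathcal{U}(\ls)^\vee\]
produces the desired map; one checks along the way that each arrow is multiplicative, so that the composite is an isomorphism of algebras.

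The hard part is that both inputs are open problems of a depth far exceeding the formal manipulation above: the main conjecture for multiple zeta values is known only through numerical verification in low weights, and the identification $\grD\dm\simeq\ls$—that is, surjectivity of the canonical embedding $\grD\dm\hookrightarrow\ls$—is wide open. Thus the argument does not yield an unconditional proof; it establishes that Conjecture~\ref{conj:mzv_depthgraded_iso} is a formal consequence of those two conjectures together with strictness of the depth filtration. The one unconditionally accessible fragment is the upper-bound (surjection) direction, exactly paralleling \eqref{eq:iso_grZq_Ulq} and Theorem~\ref{thm:qmzv_depthgraded_surj} in the $q$-setting: since the double shuffle relations are genuine relations among multiple zeta values, $\mathcal{U}(\ls)^\vee$ surjects onto $\grDZ/(\zeta(2))$, and it is the reverse injectivity—the assertion that there are no further depth-graded relations beyond double shuffle—that constitutes the genuine obstacle.
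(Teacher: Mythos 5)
This statement is a conjecture, and the paper offers no proof---only the motivating derivation in Section~\ref{sec:ls_and_lq}, which presents it as the expected depth-graded consequence of the two conjectural isomorphisms $\Z/(\zeta(2))\simeq\mathcal{U}(\dm)^\vee$ (via \cite{Ra00} and the main conjecture) and $\grD\dm\simeq\ls$ from \eqref{eq:grDdm0_ls_iso}. Your proposal reconstructs exactly this reasoning, correctly flags that it is conditional rather than an unconditional proof, and in fact makes explicit the filtered-PBW and strictness points that the paper leaves implicit, so it is essentially the same approach.
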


In the remaining of this section, we explain the connection of the linearized double shuffle Lie algebra $(\ls,\{-,-\})$ and the Lie algebra $(\lqq,\ariemp)$ introduced in Section \ref{sec:def_lq} and \ref{sec:lq_Lie_alg}.

Define the two $\QQ$-linear maps $\theta_\X,\theta_\Y$ by
\begin{align}
\label{eq:thetaX}
\thetaX:&\ \QX\hookrightarrow \QB, \quad x_{s_1}\cdots x_{s_k} \mapsto b_{s_1}\cdots b_{s_k}, \\
\label{eq:thetaY}
\thetaY:&\ \QY\hookrightarrow \QB, \quad \hspace{0.08cm} y_{k_1}\cdots y_{k_d} \mapsto b_{k_d}\cdots b_{k_1}.
\end{align}
Evidently, both maps are coalgebra morphisms for the shuffle coproduct $\shco$. Moreover, the maps $\thetaX,\ \thetaY$ are compatible with the projections $\pizero$, $\piY$ occurring in Definition \ref{def:lq}, \ref{def:ls}.

\begin{lem} \label{lem:thetas_proj} We have
\begin{enumerate}[(i)]
\item $\tau\circ \pizero\circ \thetaX=\thetaY\circ \piY$,
\item $\pizero\circ \thetaX=\tau\circ\thetaY\circ \piY$.
\end{enumerate}
\end{lem}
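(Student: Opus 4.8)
The plan is to verify identity (i) directly on words and then deduce (ii) from it by exploiting that $\tau$ is an involution on $\QB^0$. Since $\thetaX,\thetaY,\pizero,\piY,\tau$ are all $\QQ$-linear, it suffices to check the identities on an arbitrary word $w=x_{s_1}\cdots x_{s_k}\in\QX$, and the natural case distinction is according to whether $w$ ends in the letter $x_0$ or not.

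First I would dispose of the case where $w$ ends in $x_0$. By the definition of $\piY$ in \eqref{eq:piY} we then have $\piY(w)=0$, so the right-hand side $\thetaY(\piY(w))$ of (i) vanishes. On the other hand $\thetaX(w)$ ends in the letter $b_0$, hence does not lie in $\QB^0$ and is annihilated by the projection $\pizero$; so the left-hand side $\tau(\pizero(\thetaX(w)))=\tau(0)=0$ vanishes as well. Thus both sides of (i) agree in this case.

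Next, for a word not ending in $x_0$ (the empty word being the trivial $d=0$ case), I would write it in block form $w=x_0^{k_1-1}x_1\cdots x_0^{k_d-1}x_1$ with $k_1,\dots,k_d\geq 1$. The right-hand side of (i) is $\thetaY(\piY(w))=\thetaY(y_{k_1}\cdots y_{k_d})=b_{k_d}\cdots b_{k_1}$ by \eqref{eq:thetaY}. For the left-hand side, $\thetaX(w)=b_0^{k_1-1}b_1\cdots b_0^{k_d-1}b_1$ ends in $b_1$, so it lies in $\QB^0$ and is fixed by $\pizero$. Applying $\tau$ (Definition \ref{def:tau}) to this word, in which every ``$b_k$''-slot equals $b_1$ and the $b_0$-exponents are $m_s=k_s-1$, produces $b_{m_d+1}\cdots b_{m_1+1}=b_{k_d}\cdots b_{k_1}$. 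The two expressions coincide, which proves (i).

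Finally, I would obtain (ii) for free. The image of $\thetaY\circ\piY$ always lies in $\QB^0$ (it consists of $0$ or of words in the letters $b_k$ with $k\geq1$, none ending in $b_0$), and a short computation from Definition \ref{def:tau} shows that $\tau$ is an involution, $\tau^2=\id$, on $\QB^0$. Applying $\tau$ to both sides of (i) and using $\tau^2=\id$ turns $\tau\circ\pizero\circ\thetaX=\thetaY\circ\piY$ into $\pizero\circ\thetaX=\tau\circ\thetaY\circ\piY$, which is exactly (ii). The only genuine point of care — the main, albeit minor, obstacle — is the bookkeeping: $\thetaY$ reverses the order of the letters, while $\tau$ simultaneously reverses the blocks and interchanges the roles of the $b_0$-exponents $m_s$ and the indices $k_s$ up to a shift by one. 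The block parametrization with $m_s=k_s-1$ makes the compatibility of these two reversals and the shift transparent, and verifying $\tau^2=\id$ is what legitimizes deducing (ii) from (i).
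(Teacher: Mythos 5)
Your proposal is correct and matches the paper's own proof essentially step for step: the same case distinction on whether the word ends in $x_0$, the same block computation $\tau(b_0^{k_1-1}b_1\cdots b_0^{k_d-1}b_1)=b_{k_d}\cdots b_{k_1}$ for part (i), and the same deduction of (ii) by applying the involution $\tau$ to both sides of (i). Your extra remark that the image of $\thetaY\circ\piY$ lies in $\QB^0$ (so that $\tau$ may legitimately be applied) is a careful touch the paper leaves implicit.
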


\begin{proof} We first prove (i). Let $w\in \QX$. If $w=\widetilde{w}x_0$ ends with $x_0$, then
\begin{align*}
\big(\tau\circ \pizero\circ \thetaX\big)(w)=\big(\tau\circ \pizero\big)\big(\thetaX(\widetilde{w})b_0\big) =0 = \piY(w) =\big(\thetaY\circ \piY\big)(w).
\end{align*}
If $w$ does not end with $x_0$, we can write $w=x_0^{k_1-1}x_1\cdots x_0^{k_d-1}x_1$ for some $k_1,\ldots,k_d\geq1$. We compute
\begin{align*}
\big(\tau\circ \pizero\circ \thetaX\big)(w)=\big(\tau\circ \pizero\big)(b_0^{k_1-1}b_1\cdots b_0^{k_d-1}b_1)=\tau(b_0^{k_1-1}b_1\cdots b_0^{k_d-1}b_1)=b_{k_d}\cdots b_{k_1},
\end{align*}
and on the other hand
\begin{align*}
\big(\thetaY\circ\piY)(w)=\thetaY(y_{k_1}\cdots y_{k_d})=b_{k_d}\cdots b_{k_1}.
\end{align*}
Altogether, the maps $\tau\circ \pizero\circ \thetaX$ and $\thetaY\circ \piY$ agree on $\QX$. The equality in (ii) follows now from applying $\tau$ to both sides of the equation in (i), as $\tau$ is an involution.
\end{proof}

Next, we study the compatibility of the maps $\thetaX$, $\thetaY$ and the Lie brackets $\{-,-\}$, $\ariemp$.

\begin{prop} \label{prop:thetaX_Lie_bracks} We have for $v,w\in \QX$
\[ \ari{\thetaX(v)}{\thetaX(w)}=\thetaX(\{v,w\}).\]
\end{prop}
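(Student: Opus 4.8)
The plan is to exploit that both brackets have the same formal shape, $D_f(g)-D_g(f)+[f,g]$, and that $\thetaX\colon\QX\to\QB$ is an algebra morphism for concatenation (so in particular $\thetaX([v,w])=[\thetaX(v),\thetaX(w)]$). Expanding both sides of the desired identity, the commutator terms match automatically, so the proposition reduces to the single cross-term identity
\[
\der{\thetaX(v)}\big(\thetaX(w)\big)=\thetaX\big(d_v(w)\big),\qquad v,w\in\QX,
\]
where $d_v$ denotes the Ihara derivation of Definition \ref{def:Ihara_bracket}. The other cross term follows by exchanging the roles of $v$ and $w$.

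To prove this identity I would fix $v$ and compare the two maps $w\mapsto\der{\thetaX(v)}(\thetaX(w))$ and $w\mapsto\thetaX(d_v(w))$ as functions of $w$. Since $\der{\thetaX(v)}$ and $d_v$ are derivations for concatenation and $\thetaX$ is an algebra morphism, both maps satisfy the same $\thetaX$-twisted Leibniz rule $F(w_1w_2)=F(w_1)\thetaX(w_2)+\thetaX(w_1)F(w_2)$. Any such map vanishes on the empty word and is therefore uniquely determined by its values on the generators $x_0,x_1$; hence it suffices to check equality on $x_0$ and $x_1$. On $x_0$ both sides vanish, since $d_v(x_0)=0$ and $\der{\thetaX(v)}(b_0)=0$. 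On $x_1$ one has $d_v(x_1)=[x_1,v]$, so $\thetaX(d_v(x_1))=[b_1,\thetaX(v)]$, and the task becomes the verification that $\der{\thetaX(v)}(b_1)=[b_1,\thetaX(v)]$.

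The crux is this last computation. Using that $\der{-}$ is extended $\QQ$-linearly in its label, I may take $v$ to be a single word, so that $\thetaX(v)=b_0^{m_1}b_1\cdots b_0^{m_d}b_1b_0^{m_{d+1}}$ has every $k_s=1$ (the image of $\thetaX$ uses only the letters $b_0,b_1$). In the defining sum of $\der{\thetaX(v)}(b_1)$ the factor $\binom{k_s-1}{l_s-1}=\binom{0}{l_s-1}$ then forces $l_s=1$ for every $s$, which in turn forces $l=0$; the entire sum collapses to the single term $[b_1,b_0^{m_1}b_1\cdots b_0^{m_d}b_1b_0^{m_{d+1}}]=[b_1,\thetaX(v)]$, as required. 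This collapse — driven precisely by the constraint that $\thetaX$ lands in the two-letter alphabet — is the only substantive point; the remaining steps are the formal twisted-Leibniz reduction and the bookkeeping needed to justify passing from polynomials $v$ to words $v$. I expect no genuine obstacle beyond keeping this collapse and the linearity conventions straight.
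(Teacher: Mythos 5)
Your proposal is correct and follows essentially the same route as the paper: the paper's proof likewise observes that for a word $w$ in the letters $b_0,b_1$ (i.e., all $k_s=1$) the defining sum of $\der{w}$ collapses via $\binom{k_s-1}{l_s-1}=\binom{0}{l_s-1}$ to $\der{w}(b_0)=0$, $\der{w}(b_i)=[b_i,w]$, matching the Ihara derivation, whence the two brackets agree term by term. The only difference is presentational — you spell out the twisted-Leibniz reduction to generators, which the paper leaves implicit.
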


\begin{proof} By Definition \ref{def:Lie_bracket_lq}, we have for a word $w$ just containing the letters $b_0,b_1$ that $\der{w}(b_0)=0$ and $\der{w}(b_i)=[b_i,w]$ for $i\geq1$. This is similar to Definition \ref{def:Ihara_bracket}, so we have \[\der{\thetaX(w)}(\thetaX(v))=\thetaX(d_w(v))\] 
for all $v,w\in \QX$. Since the Ihara bracket $\{-,-\}$ and the Lie bracket $\ariemp$ are defined in the same way (cf Definition \ref{def:Lie_bracket_lq}, \ref{def:Ihara_bracket}), we deduce
\[\ari{\thetaX(v)}{\thetaX(w)}=\thetaX(\{v,w\}),\qquad v,w\in \QX. \qedhere\]
\end{proof}

\begin{prop}\label{prop:thetaY_Lie_bracks} For $\phi,\psi\in \ls$, we have
\[\ari{\thetaY(\piY(\phi))}{\thetaY(\piY(\psi))}=\thetaY\big(\piY(\{\phi,\psi\})\big).\]
\end{prop}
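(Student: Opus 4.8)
The plan is to transport the identity of Proposition~\ref{prop:thetaX_Lie_bracks} from the $\thetaX$-side to the $\thetaY\circ\piY$-side, using the relation $\thetaY\circ\piY=\tau\circ\pizero\circ\thetaX$ of Lemma~\ref{lem:thetas_proj}(i) together with the $\tau$-equivariance recorded in Corollary~\ref{cor:Lie_brack_tau_inv}. Since all brackets and maps involved are homogeneous for weight and depth, I may assume $\phi,\psi$ are homogeneous. Set $A=\pizero(\thetaX(\phi))$ and $A'=\pizero(\thetaX(\psi))$, so that Lemma~\ref{lem:thetas_proj}(i) gives $\thetaY(\piY(\phi))=\tau(A)$ and $\thetaY(\piY(\psi))=\tau(A')$. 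The target should then follow from the chain
\begin{align*}
\ari{\thetaY(\piY(\phi))}{\thetaY(\piY(\psi))}
&=\arizero{\tau(A)}{\tau(A')}
=\tau\big(\arizero{A}{A'}\big) \\
&=\tau\big(\pizero\big(\ari{\thetaX(\phi)}{\thetaX(\psi)}\big)\big)
=\tau\big(\pizero\big(\thetaX(\{\phi,\psi\})\big)\big)
=\thetaY\big(\piY(\{\phi,\psi\})\big),
\end{align*}
where the last three equalities use \eqref{eq:Lie_brack_QB0} (the elements $\thetaX(\phi),\thetaX(\psi)$ being Lie elements without $b_0$-component, hence in $\QDbi$), Proposition~\ref{prop:thetaX_Lie_bracks}, and again Lemma~\ref{lem:thetas_proj}(i). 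Two equalities need genuine justification: the first, which replaces the bracket $\ariemp$ on $\QB$ by its counterpart $\arizero{-}{-}$ on $\QB^0$, and the second, which invokes Corollary~\ref{cor:Lie_brack_tau_inv}.

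For the first equality, I observe that $\thetaY(\piY(\phi))$ and $\thetaY(\piY(\psi))$ are linear combinations of words $b_{k_d}\cdots b_{k_1}$ with every $k_i\geq1$, hence contain no letter $b_0$. A glance at Definition~\ref{def:Lie_bracket_lq} shows that for two such words both the derivation $\der{}(-)$ and the commutator produce only words without $b_0$, so $\ari{\tau(A)}{\tau(A')}$ again lies in $\QB^0$. As these elements also lie in $\QDbi$, Proposition~\ref{prop:pi0_sec_inverse}(iii) gives $\sec(\tau(A))=\tau(A)$ and $\sec(\tau(A'))=\tau(A')$, so the defining formula \eqref{eq:arizero} of $\arizero{-}{-}$ collapses to $\pizero\circ\ariemp$, which is just $\ariemp$ here.

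The main obstacle is the second equality, since Corollary~\ref{cor:Lie_brack_tau_inv} requires $A$ and $A'$ to be $\rho$-invariant, and one cannot simply cite Proposition~\ref{prop:lq_rho_inv}: the element $\thetaX(\phi)$ does not lie in $\lqq$, as condition (iv) of Definition~\ref{def:lq} generally fails (indeed $\tau(A)=\thetaY(\piY(\phi))\neq A$). Instead I would use the general identity \eqref{eq:rho_equal_S_tau}, valid for all $w\in\QB^0$,
\[(S_0\circ\tau\circ S_0\circ\tau)(w)=(-1)^{\wt(w)+\dep(w)}\rho(w),\]
where $S_0=\pizero\circ S\circ\sec$ and $S$ is the shuffle antipode. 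Both $\thetaX(\phi)$ and $\thetaY(\piY(\phi))=\tau(A)$ are primitive for $\shco$, being images of Lie elements under the coalgebra morphisms $\thetaX$ and $\thetaY$; hence, exactly as in the proof of Proposition~\ref{prop:lq_rho_inv}, one obtains $S_0(A)=-A$ and $S_0(\tau(A))=-\tau(A)$. Applying $S_0\circ\tau\circ S_0\circ\tau$ to $A$ then produces in turn $\tau(A)$, $-\tau(A)$, $-A$, and finally $A$, so $(S_0\circ\tau\circ S_0\circ\tau)(A)=A$.

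Finally, since $\thetaX$ preserves weight and depth, Theorem~\ref{thm:ls_parity} forces $\wt(A)\equiv\dep(A)\bmod 2$, so the sign $(-1)^{\wt(A)+\dep(A)}$ equals $+1$; therefore $\rho(A)=A$, and likewise $\rho(A')=A'$. This verifies the hypotheses of Corollary~\ref{cor:Lie_brack_tau_inv}, legitimizing the second equality in the chain and completing the argument.
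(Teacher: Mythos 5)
Your proposal is correct and follows essentially the same route as the paper's own proof: the identical chain of equalities via Lemma \ref{lem:thetas_proj}, Corollary \ref{cor:Lie_brack_tau_inv}, \eqref{eq:Lie_brack_QB0} and Proposition \ref{prop:thetaX_Lie_bracks}, with the $\rho$-invariance of $\pizero(\thetaX(\phi))$ established exactly as in the paper through $S_0\circ\tau\circ S_0\circ\tau$, the identity \eqref{eq:rho_equal_S_tau}, and the parity result of Theorem \ref{thm:ls_parity}. Your explicit justification that $\ariemp$ coincides with $\arizero{-}{-}$ on $b_0$-free elements is a welcome elaboration of a step the paper leaves implicit, but it does not constitute a different argument.
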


\begin{proof} 
Let $\phi\in \ls$. We first show that $\pizero(\thetaX(\phi))$ is $\rho$-invariant, where the map $\rho$ is given in Definition \ref{def:rho}. By the Definition \ref{def:ls} of $\ls$, $\phi$ is primitive for the shuffle coproduct. Since $\thetaX$ is a coalgebra morphism, also $\thetaX(\phi)$ is primitive for the shuffle coproduct. Hence, we get
$S(\thetaX(\phi))=-\thetaX(\phi)$, where $S$ is the antipode of the Hopf algebra $(\QB,\cdot,\shco)$ as in \eqref{eq:antipode}. Recall that $S_0=\pizero\circ S\circ \sec$. By Proposition \ref{prop:pi0_sec_inverse}, the maps $\sec$ and $\pizero$ are inverse on $\QDbi$, and we have $\thetaX(\phi)\in \QDbi$. Therefore, we get  
\begin{align} \label{eq:thetaXphi_S0} S_0\big(\pizero(\thetaX(\phi))\big)=-\pizero(\thetaX(\phi)).
\end{align}
Similarly, $\thetaY$ is a coalgebra morphism, so as $\piY(\phi)$ is primitive, also $\thetaY(\piY(\phi))$ is primitive for the shuffle coproduct. Hence, we get $S\big(\thetaY(\piY(\phi))\big)=-\thetaY(\piY(\phi))$. As $\thetaY(\piY(\phi))$ does not contain the letter $b_0$, we have $S\big(\thetaY(\piY(\phi))\big)=S_0\big(\thetaY(\piY(\phi))\big)$, and hence
\begin{align} \label{eq:thetaYphi_S0}
S_0\big(\thetaY(\piY(\phi))\big)=-\thetaY(\piY(\phi)).
\end{align}
We deduce from \eqref{eq:thetaXphi_S0} and \eqref{eq:thetaYphi_S0} with Lemma \ref{lem:thetas_proj}
\begin{align*}
(S_0\circ \tau\circ S_0\circ \tau)\big(\pizero(\thetaX(\phi))\big)&=(S_0\circ \tau\circ S_0)\big(\thetaY(\piY(\phi))\big) \\
&=-(S_0\circ \tau)\big(\thetaY(\piY(\phi))\big) \\
&=-S_0\big(\pizero(\thetaX(\phi))\big) \\
&=\pizero(\thetaX(\phi)).
\end{align*}
Without loss of generality we can assume that $\phi$ is homogeneous in weight and depth, as the space $\ls$ is bigraded. Thus, with the calculation in \eqref{eq:rho_equal_S_tau}, we get
\begin{align*}
(-1)^{\wt(\phi)+\dep(\phi)}\rho\big(\pizero(\thetaX(\phi))\big)=\pizero(\thetaX(\phi)).
\end{align*}
By Theorem \ref{thm:ls_parity}, the element $\phi$ is only nonzero if $\wt(\phi)\equiv \dep(\phi) \mod 2$. We deduce
\[\rho\big(\pizero(\thetaX(\phi))\big)=\pizero(\thetaX(\phi)).\]
So, let $\phi,\psi\in\ls$. Then, we compute
\begin{align*}
\ari{\thetaY(\piY(\phi))}{\thetaY(\piY(\psi))}&=\arizero{\thetaY(\piY(\phi))}{\thetaY(\piY(\psi))} \\
&\overset{\ref{lem:thetas_proj}}{=}\arizero{(\tau\circ\pizero\circ\thetaX)(\phi)}{(\tau\circ\pizero\circ\thetaX)(\psi)} \\
&\overset{\ref{cor:Lie_brack_tau_inv}}{=}\tau\big(\arizero{(\pizero\circ\thetaX)(\phi)}{(\pizero\circ\thetaX)(\psi)} \big) \\
&\overset{\ref{eq:Lie_brack_QB0}}{=} (\tau\circ \pizero)\big(\ari{\thetaX(\phi)}{\thetaX(\psi)} \big) \\
&\overset{\ref{prop:thetaX_Lie_bracks}}{=}(\tau\circ \pizero\circ \thetaX)(\{\phi,\psi\}) \\
&\overset{\ref{lem:thetas_proj}}{=} \thetaY(\piY(\{\phi,\psi\})).
\qedhere\end{align*}
\end{proof}

\begin{prop} \label{prop:thetaXY_ari}For $\phi,\psi\in\ls$, we have
\[\ari{\thetaX(\phi)}{\thetaY(\piY(\psi))}=0.\] 
\end{prop}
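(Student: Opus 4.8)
Write $f=\thetaX(\phi)$ and $g=\thetaY(\piY(\psi))$, and note that $f$ involves only the letters $b_0,b_1$, while $g$ involves no $b_0$. The plan is first to strip off two of the three terms of the bracket. Since $f$ uses only $b_0,b_1$, Definition \ref{def:Lie_bracket_lq} gives $\der{f}(b_i)=[b_i,f]$ for all $i\geq1$ and $\der{f}(b_0)=0$; hence $\der{f}$ and $-\operatorname{ad}_f$ are two derivations of $\QB$ that agree on every generator $b_i$ with $i\geq1$. As $g$ lies in the subalgebra generated by those letters, the two derivations agree on $g$, so $\der{f}(g)=-[f,g]$ and therefore
\[\ari{f}{g}=\der{f}(g)-\der{g}(f)+[f,g]=-\der{g}(f).\]
Thus the statement reduces to proving $\der{g}(f)=0$.

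Next I would reduce $\der{g}(f)=0$ to a single computation. The derivation $\der{g}$ always satisfies $\der{g}(b_0)=0$, and $f$ lies in $\QQ\langle b_0,b_1\rangle$; so if $\der{g}(b_1)=0$, then $\der{g}$ vanishes on all of $\QQ\langle b_0,b_1\rangle$, and in particular $\der{g}(f)=0$. Applying the first reduction with $b_1=\thetaX(x_1)$ in place of $f$ yields $\der{g}(b_1)=-\ari{b_1}{g}$, and since neither $b_1$ nor $g$ involves $b_0$ we have, by \eqref{eq:Lie_brack_QB0}, that $\ari{b_1}{g}=\arizero{b_1}{g}$. Everything therefore comes down to the single vanishing $\arizero{b_1}{g}=0$.

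To prove $\arizero{b_1}{g}=0$ I would run the argument of Proposition \ref{prop:thetaY_Lie_bracks} with the depth-one generator $x_1$ in the first slot. Both $b_1$ and $g=\thetaY(\piY(\psi))$ are $\rho$-invariant: for $b_1$ this is immediate from Definition \ref{def:rho}, and for $g$ one argues exactly as in the proof of Proposition \ref{prop:thetaY_Lie_bracks}, combining $S_0(g)=-g$ (see \eqref{eq:thetaYphi_S0}) with the identity \eqref{eq:rho_equal_S_tau} and the parity $\wt(\psi)\equiv\dep(\psi)\bmod 2$ from Theorem \ref{thm:ls_parity}. Corollary \ref{cor:Lie_brack_tau_inv}, together with $\tau(b_1)=b_1$ and Lemma \ref{lem:thetas_proj}, then gives
\[\arizero{b_1}{g}=\tau\big(\arizero{\tau(b_1)}{\tau(g)}\big)=\tau\big(\arizero{b_1}{\pizero(\thetaX(\psi))}\big).\]
Finally \eqref{eq:Lie_brack_QB0} and Proposition \ref{prop:thetaX_Lie_bracks} turn the inner bracket into $\pizero(\thetaX(\{x_1,\psi\}))$, and since $d_{x_1}=0$ the Ihara bracket degenerates, $\{x_1,\psi\}=d_{x_1}(\psi)-d_\psi(x_1)+[x_1,\psi]=0$. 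Hence $\arizero{b_1}{g}=0$, so $\der{g}(b_1)=0$, and therefore $\ari{f}{g}=0$.

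The main obstacle is the middle and final steps, namely $\der{g}(b_1)=0$. For a general shuffle-primitive $g$ without the parity constraint this bracket is genuinely nonzero: for instance $\der{[b_1,b_2]}(b_1)=[b_1,[b_1,b_2]]\neq0$, and indeed $[b_1,b_2]$ is excluded precisely because it would correspond to a weight-three depth-two element forbidden by Theorem \ref{thm:ls_parity}. Thus the vanishing really exploits that $g=\thetaY(\piY(\psi))$ is $\rho$-invariant, which rests on the parity statement of Theorem \ref{thm:ls_parity}, and on the degeneracy $\{x_1,-\}\equiv0$ of the Ihara bracket against the depth-one generator.
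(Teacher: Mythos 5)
Your proof is correct, and while it shares the paper's opening move, its core runs along a genuinely different route. Both you and the paper begin with the same reduction: since $\der{f}$ agrees with $-\ad(f)$ on the $b_0$-free subalgebra containing $g=\thetaY(\piY(\psi))$, one has $\ari{f}{g}=-\der{g}(f)$ (this is \eqref{eq:derA_on_b0b1_bi}). From there the paper attacks $\der{g}(f)=0$ for the general element $f=\thetaX(\phi)$ head-on: it computes $\pizero\big(\der{g}(f)\big)=0$ in one long chain, splitting $\derzero{}$ into $\derrzero{}-\derlzero{}$, conjugating by $\tau$ via both parts of Proposition \ref{prop:dA0_and_tau}, cancelling the $\conczero$-terms with Lemma \ref{lem:thetas_proj}, and then (implicitly via Proposition \ref{prop:pi0_sec_inverse}) using injectivity of $\pizero$ on $\QDbi$ to conclude. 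You instead exploit that $\der{g}$ is a derivation killing $b_0$, so everything collapses to the single depth-one instance $\der{g}(b_1)=0$; you convert this to $\arizero{b_1}{g}=0$ and transfer it through Corollary \ref{cor:Lie_brack_tau_inv} and Lemma \ref{lem:thetas_proj} to the $x$-side, where the Ihara bracket degenerates, $\{x_1,\psi\}=0$, via Proposition \ref{prop:thetaX_Lie_bracks}. The price of your route is that you need $\rho$-invariance of $g=\thetaY(\piY(\psi))$ itself, a statement the paper never records (its Proposition \ref{prop:thetaY_Lie_bracks} proves $\rho$-invariance only of $\pizero(\thetaX(\psi))$); but your sketch is sound, since the chain $S_0\circ\tau\circ S_0\circ\tau(g)=g$ follows from \eqref{eq:thetaXphi_S0}, \eqref{eq:thetaYphi_S0} and Lemma \ref{lem:thetas_proj}, and the word-level identity \eqref{eq:rho_equal_S_tau} together with the parity of Theorem \ref{thm:ls_parity} then gives $\rho(g)=g$. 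What your argument buys: it replaces the paper's heaviest computation by soft structural steps, and since $\ari{b_1}{g}$ is visibly $b_0$-free, $\pizero$ acts on it as the identity, so you never need the injectivity deduction beyond the already-cited \eqref{eq:Lie_brack_QB0}; your closing example $\der{[b_1,b_2]}(b_1)=[b_1,[b_1,b_2]]\neq0$ also correctly pinpoints that the vanishing genuinely rests on parity/$\rho$-invariance rather than on primitivity alone. What the paper's direct computation buys in exchange is that it uses only invariance facts it has already established, without the auxiliary $\rho$-invariance of $\thetaY(\piY(\psi))$.
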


\begin{proof} Let $w\in \QQ\langle b_0,b_1\rangle$ be any polynomial in the letters $b_0,b_1$ and $v=b_{k_1}\cdots b_{k_d}$ with $k_1,\ldots,k_d\geq1$. Then, we have by Definition \ref{def:Lie_bracket_lq}
\begin{align*}
\der{w}(v)=\sum_{i=1}^d b_{k_1}\cdots b_{k_{i-1}}[b_{k_i},w]b_{k_{i+1}}\cdots b_{k_d} =b_{k_1}\cdots b_{k_d}w-wb_{k_1}\cdots b_{k_d} =-[w,v].
\end{align*}
Since $\thetaX(\phi)\in \QQ\langle b_0,b_1\rangle$ and $\thetaY(\piY(\psi))\in\QQ\langle b_i\mid i\geq1\rangle$ for $\phi,\psi\in\ls$, we get
\begin{align} \label{eq:derA_on_b0b1_bi}
\ari{\thetaX(\phi)}{\thetaY(\piY(\psi))}&=\der{\thetaX(\phi)}(\thetaY(\piY(\psi)))-\der{\thetaY(\piY(\psi))}(\thetaX(\phi))+[\thetaX(\Phi),\thetaY(\piY(\psi))] \\
&=-\der{\thetaY(\piY(\psi))}(\thetaX(\phi)). \nonumber
\end{align}
Thus, we are left with showing that $\der{\thetaY(\piY(\psi))}(\thetaX(\phi))=0$. Recall from the proof of Proposition \ref{prop:thetaY_Lie_bracks} that $\pizero(\thetaX(\psi))$ is $\rho$-invariant. Thus, we compute
\begin{align*}
\pizero\big(\der{\thetaY(\piY(\psi))}(\thetaX(\phi))\big)&\overset{\ref{lem:thetas_proj}}{=}\derzero{(\tau\circ \pizero\circ \thetaX)(\psi)}((\tau\circ\thetaY\circ \piY)(\phi)) \\
&=\derrzero{(\tau\circ \pizero\circ \thetaX)(\psi)}((\tau\circ\thetaY\circ \piY)(\phi))-\derlzero{(\tau\circ \pizero\circ \thetaX)(\psi)}((\tau\circ\thetaY\circ \piY)(\phi)) \\
&\overset{\ref{prop:dA0_and_tau}}{=} \tau\Big(\derrzero{(\pizero\circ\thetaX)(\psi)}((\thetaY\circ\piY)(\phi))\Big) + \tau\Big((\pizero\circ\thetaX)(\psi)\conczero(\thetaY\circ\piY)(\phi)\Big) \\
& -(\tau\circ \pizero\circ \thetaX)(\psi)\conczero(\tau\circ\thetaY\circ\piY)(\phi)-\tau\Big(\derlzero{(\rho\circ \pizero\circ\thetaX)(\psi)}((\thetaY\circ\piY)(\phi))\Big) \\
&\overset{\ref{cor:Lie_brack_tau_inv},\ref{lem:thetas_proj}}{=} \tau\Big(\derzero{(\pizero\circ\thetaX)(\psi)}((\thetaY\circ\piY)(\phi))\Big)+ \tau\Big((\pizero\circ\thetaX)(\psi)\conczero(\thetaY\circ\piY)(\phi)\Big) \\
&\hspace{0.4cm}-(\thetaY\circ\piY)(\psi)\conczero(\pizero\circ \thetaX)(\phi) \\
&=(\tau\circ\pizero)\Big(\der{\thetaX(\psi)}((\thetaY\circ\piY)(\phi))+\thetaX(\psi)(\thetaY\circ\piY)(\phi)\Big)\\
&\hspace{0.4cm}-\pizero\Big((\thetaY\circ\piY)(\psi)\thetaX(\phi)\Big) \\
&\overset{\ref{eq:derA_on_b0b1_bi}}{=} (\tau\circ \pizero)\Big((\thetaY\circ\piY)(\phi)\thetaX(\psi)\Big)-\pizero\Big((\thetaY\circ\piY)(\psi)\thetaX(\phi)\Big) \\
&=(\tau\circ\pizero\circ\thetaX)(\psi)(\tau\circ\thetaY\circ\piY)(\phi)-(\thetaY\circ\piY)(\psi)(\pizero\circ \thetaX)(\phi) \\
&\overset{\ref{lem:thetas_proj}}{=}0.
\end{align*}
We deduce $\der{\thetaY(\piY(\psi))}(\thetaX(\phi))=0$ and hence get the claimed formula.
\end{proof}

As part (iii) of Theorem \ref{main_thm}, we have the following.

\begin{thm} \label{thm:emb_ls_lq} There is an injective Lie algebra morphism
\[\theta:(\ls,\{-,-\})\hookrightarrow (\lqq,\ariemp), \quad \phi\mapsto \thetaX(\phi)+\thetaY(\piY(\phi)).\]
\end{thm}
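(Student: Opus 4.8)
The plan is to establish the three assertions of the theorem in turn --- that $\theta$ maps $\ls$ into $\lqq$, that it intertwines the two brackets, and that it is injective --- and to observe that the genuinely analytic content is already packaged in Propositions~\ref{prop:thetaX_Lie_bracks}, \ref{prop:thetaY_Lie_bracks} and \ref{prop:thetaXY_ari}, so that the proof is largely an assembly of these together with a membership check. First I would verify conditions (i)--(iii) of Definition~\ref{def:lq} for $\theta(\phi)=\thetaX(\phi)+\thetaY(\piY(\phi))$. Conditions (i) and (ii) concern only the $b_0$-coefficient and the depth-$1$ coefficients $(\theta(\phi)\mid b_0^m b_k)$; since $\thetaX$ merely relabels $x_i$ by $b_i$ while $\thetaY\circ\piY$ produces words containing no $b_0$, each such coefficient is read off either as $(\phi\mid x_0^m x_1)$ (when $k=1$) or as $(\piY(\phi)\mid y_k)=(\phi\mid x_0^{k-1}x_1)$ (when $m=0$), and the parity conditions translate precisely into conditions (i), (ii) of Definition~\ref{def:ls}. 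Condition (iii), primitivity for $\shco$, follows because $\thetaX$ and $\thetaY$ are coalgebra morphisms and both $\phi$ and $\piY(\phi)$ are primitive (conditions (iii), (iv) of Definition~\ref{def:ls}), so each summand, and hence their sum, is primitive.

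The subtle membership condition is (iv). Here I would invoke Lemma~\ref{lem:thetas_proj}(ii), which yields $\pizero(\thetaX(\phi))=\tau(\thetaY(\piY(\phi)))$, together with the fact that $\thetaY(\piY(\phi))$ contains no letter $b_0$ and is therefore fixed by $\pizero$. Setting $u=\thetaY(\piY(\phi))$ gives $\pizero(\theta(\phi))=\tau(u)+u$, and since $\tau$ is an involution, applying $\tau$ returns $u+\tau(u)$, which is the same element; hence $\tau(\pizero(\theta(\phi)))=\pizero(\theta(\phi))$. This is the step I expect to be the main (though short) obstacle, since it is precisely where the two components $\thetaX$ and $\thetaY\circ\piY$ must be combined in exactly the right proportion for $\tau$-invariance to hold.

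For the morphism property I would expand $\ari{\theta(\phi)}{\theta(\psi)}$ by bilinearity into four brackets. The pure $\thetaX$ term equals $\thetaX(\{\phi,\psi\})$ by Proposition~\ref{prop:thetaX_Lie_bracks}, the pure $\thetaY\circ\piY$ term equals $\thetaY(\piY(\{\phi,\psi\}))$ by Proposition~\ref{prop:thetaY_Lie_bracks}, and the two mixed terms $\ari{\thetaX(\phi)}{\thetaY(\piY(\psi))}$ and $\ari{\thetaY(\piY(\phi))}{\thetaX(\psi)}$ both vanish by Proposition~\ref{prop:thetaXY_ari} together with the antisymmetry of $\ariemp$. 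Summing the surviving terms gives exactly $\theta(\{\phi,\psi\})$.

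Finally, for injectivity I would restrict attention to words containing at least one letter $b_0$. Only $\thetaX(\phi)$ can contribute to such words, since $\thetaY(\piY(\phi))$ has no $b_0$, so $\theta(\phi)=0$ forces $\thetaX(\phi)$ to be supported on words in $b_1$ alone, that is, $\phi\in\QQ[x_1]$. A primitive element of $\QQ[x_1]$ (for $\shco$, with $x_1$ primitive) is a scalar multiple of $x_1$, and condition (i) of Definition~\ref{def:ls} forces that scalar to vanish; hence $\phi=0$ and $\theta$ is injective.
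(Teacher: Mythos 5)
Your proposal is correct and follows essentially the same route as the paper's proof: the same coefficient checks for conditions (i)--(ii), primitivity via the coalgebra morphism property of $\thetaX,\thetaY$ for (iii), condition (iv) via Lemma \ref{lem:thetas_proj} and $\tau$ being an involution, and the same four-term bracket expansion handled by Propositions \ref{prop:thetaX_Lie_bracks}, \ref{prop:thetaY_Lie_bracks} and \ref{prop:thetaXY_ari}. The only difference is that you additionally spell out injectivity (coefficients of $b_0$-containing words force $\phi\in\QQ[x_1]$, primitivity then gives $\phi\in\QQ x_1$, and condition (i) of Definition \ref{def:ls} kills it), a point the paper's proof leaves implicit; your argument there is sound.
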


\begin{proof}
We first show that $\theta(\phi)\in \lqq$ for $\phi\in\ls$, so that the map $\theta$ is well-defined. Since $\phi\in \ls$ satisfies $(\phi\mid x_0)=0$ and $(\phi\mid x_0^{m-1}x_1)=0$ for $m$ even, we deduce
\[(\thetaX(\phi)\mid b_0)=(\thetaX(\phi)\mid b_0^{m-1}b_1)=0,\qquad (\thetaY(\piY(\phi))\mid y_m)=0, \qquad m \text{ even}.\]
Moreover, by construction $\theta(\phi)$ never contains a word in which $b_0$ and $b_i$ for some $i>1$ occur at the same time. Thus, we also have $(\theta(\phi)\mid b_0^mb_k)=0$ for $k+m$ even and $k>1$, $m>0$. Altogether, this shows that $\theta(\phi)$ satisfies the conditions (i), (ii) in the Definition \ref{def:lq} of $\lqq$.

Next, we show that $\theta(\phi)$ also satisfies condition (iii), i.e., that $\theta(\phi)$ is primitive von $\shco$. By definition of $\ls$, $\phi\in \QX$ and $\piY(\phi)\in \QY$ are primitive for the shuffle coproducts on $\QX$ and $\QY$. Since $\thetaX$, $\thetaY$ are coalgebra morphisms, we deduce
\begin{align*}
\shco\big(\theta(\phi)\big)&=\shco\big(\thetaX(\phi)\big)+\shco\big(\thetaY(\piY(\phi))\big) \\
&=\thetaX\big(\shco(\phi)\big)+\thetaY\big(\shco(\piY(\phi))\big) \\
&=\thetaX\big(\phi\otimes 1+1\otimes \phi\big)+\thetaY\big(\piY(\phi)\otimes1+1\otimes\piY(\phi)\big)\\
&=\big(\thetaX(\phi)+\thetaY(\piY(\phi))\big)\otimes1+1\otimes\big(\thetaX(\phi)+\thetaY(\piY(\phi))\big) \\
&=\theta(\phi)\otimes1 +1\otimes\theta(\phi).
\end{align*}
Finally, we show that $\theta(\phi)$ satisfies condition (iv) in the Definition \ref{def:lq} of $\lqq$, i.e., that $\pizero(\theta(\phi))$ is $\tau$-invariant. By Lemma \ref{lem:thetas_proj} and $\tau$ being an involution, we have
\begin{align*}
\tau\big(\pizero(\theta(\phi))\big)&=\tau\big(\pizero(\thetaX(\phi))\big)+\tau\big(\thetaY(\piY(\phi))\big) \\
&=\thetaY(\piY(\phi))+\pizero(\thetaX(\phi)) \\
&=\pizero\big(\thetaY(\piY(\phi))+\thetaX(\phi) \big) \\
&=\pizero(\theta(\phi)).
\end{align*}
Here, we used that by definition $\thetaY(\piY(\phi))$ does not contain the letter $b_0$ and hence we have $\pizero\big(\thetaY(\piY(\phi))\big)=\thetaY(\piY(\phi))$.

Moreover, $\theta$ is a Lie algebra morphism, as we have for $\phi,\psi\in\ls$
\begin{align*}
\ari{\theta(\phi)}{\theta(\psi)}&\overset{\ref{prop:thetaXY_ari}}{=}\ari{\thetaX(\phi)}{\thetaX(\psi)}+\ari{\thetaY(\piY(\phi))}{\thetaY(\piY(\psi))}\\
&\overset{\ref{prop:thetaX_Lie_bracks},\ref{prop:thetaY_Lie_bracks}}{=} \thetaX(\{\phi,\psi\})+\thetaY(\piY(\{\phi,\psi\})) \\
&=\theta(\{\phi,\psi\}).
\qedhere \end{align*}
\end{proof}

\section{The Lie algebra $\lqq$ and bimoulds} \label{sec:bimoulds}

In this section, we translate our results from the previous sections into the language of bimoulds. We just introduce the needed notions for bimoulds, for more details and background we refer to \cite{Ec11}, \cite{Sc15}.

\vspace{0.3cm} 
A \textit{bimould} is a sequence of (commutative) polynomials
\begin{align*} A=\left(A_d\binom{X_1,\ldots,X_d}{Y_1,\ldots,Y_d}\right)_{d\geq0}&=\left(A_0(\emptyset),A_1\binom{X_1}{Y_1},A_2\binom{X_1,X_2}{Y_1,Y_2},\ldots\right)\\
&\in\bigoplus_{d\geq0}\QQ[X_1,Y_1,\ldots,X_d,Y_d],
\end{align*}
where we only allow finitely many components $A_d$ to be nonzero. The space of all bimoulds together with the multiplication
\begin{align*}
\operatorname{mu}(A,B)_d\binom{X_1,\ldots,X_d}{Y_1,\ldots,Y_d}=\sum_{i=0}^d A_i\binom{X_1,\ldots,X_i}{Y_1,\ldots,Y_i}B_{d-i}\binom{X_{i+1},\ldots,X_d}{Y_{i+1},\ldots,Y_d}
\end{align*}
is a $\QQ$-algebra.

Consider the alphabet $\Dbi=\{D_{k,m}\mid k\geq 1, m\geq0\}$ from \eqref{eq:def_Dbi}. The \textit{coefficient map} $\varphi_A:\QDbi\to \QQ$ of the bimould $A$ is the $\QQ$-linear map defined by
\begin{align*}	A_d\binom{X_1,\ldots,X_d}{Y_1,\ldots,Y_d}=\sum_{\substack{k_1,\ldots,k_d\geq1 \\ m_1,\ldots,m_d\geq0}} \varphi_A(D_{k_1,m_1}\cdots D_{k_d,m_d})X_1^{k_1-1}Y_1^{m_1}\cdots X_d^{k_d-1}Y_d^{m_d}, \quad d\geq0.
\end{align*}

\begin{defi} (i) A bimould $A$ is called \emph{alternal} if its coefficient map $\varphi_A$ satisfies
\[\varphi_A(v\shuffle w)=0 \qquad \text{for all } v,w \in \QDbi\backslash \QQ.\]
(ii) A bimould $A$ is called \emph{swap invariant} if
\[A_d\binom{X_1,\ldots,X_d}{Y_1,\ldots,Y_d}=A_d\binom{Y_d,Y_{d-1}+Y_d,\ldots,Y_1+\cdots+Y_d}{X_d-X_{d-1},\ldots,X_2-X_1,X_1}, \qquad d\geq1.\]
(iii) The set of all alternal, swap invariant bimoulds $A$, such that $A_0=0$ and $A_1$ is even, is denoted by $\BARI$.
\end{defi}

Note that the swap invariance defined in (ii) slightly differs from the one in \cite[§2.4]{Sc15}, the connection is given by pre- and post-composing with the substitution $\binom{X_1,\ldots,X_d}{Y_1,\ldots,Y_d}\mapsto\binom{Y_d,\ldots,Y_1}{X_d,\ldots,Y_1}$.

\begin{prop} \label{prop:bari_parity} \cite{Sc15} Let $A\in \BARI$. Then, the components $A_d\binom{X_1,\ldots,X_d}{Y_1,\ldots,Y_d}$ are even polynomials for all $d\geq1$.
\end{prop}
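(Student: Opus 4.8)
The plan is to deduce the evenness of each $A_d$ from the two symmetries built into the definition of $\BARI$, by showing that alternality and swap invariance together generate a group of linear substitutions on the $2d$ variables that already contains the total negation. Throughout, write $\operatorname{neg}(A)_d\binom{X_1,\ldots,X_d}{Y_1,\ldots,Y_d}=A_d\binom{-X_1,\ldots,-X_d}{-Y_1,\ldots,-Y_d}$; the assertion is precisely that $\operatorname{neg}(A)=A$.

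First I would convert alternality into a reversal symmetry, reusing the antipode computation from the proof of Proposition \ref{prop:lq_rho_inv}. Since $A$ is alternal, its coefficient map $\varphi_A$ vanishes on shuffle products, so the (polynomial) generating series $\sum_w\varphi_A(w)\,w\in\QDbi$ is primitive for the shuffle coproduct on $\QDbi$ and hence satisfies $S(\Phi)=-\Phi$, where $S$ reverses $D$-words with sign $(-1)^{\dep}$. Comparing coefficients yields the \emph{mantar} relation
\[A_d\binom{X_1,\ldots,X_d}{Y_1,\ldots,Y_d}=(-1)^{d-1}A_d\binom{X_d,\ldots,X_1}{Y_d,\ldots,Y_1},\qquad d\geq1.\]
Denoting this involution by $\operatorname{mantar}$ and writing $\operatorname{swap}$ for the swap operation, the element $A$ is now fixed by both $\operatorname{mantar}$ and $\operatorname{swap}$.

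The key observation is that $\operatorname{mantar}$ and $\operatorname{swap}$ act on the $2d$ variables by invertible linear substitutions carrying an overall sign, so they generate a finite group $G$ of such operators, and $A$, being fixed by both generators, is fixed by all of $G$. It then remains to place $\operatorname{neg}$ inside $G$. This is exactly Ecalle's and Schneps's push-operator calculus (\cite{Ec11}, \cite{Sc15}): a suitable composition of $\operatorname{swap}$ and $\operatorname{mantar}$ (the $\operatorname{push}$ operator) is cyclic of order $d+1$ in depth $d$, and tracking the substitution matrices together with the $(-1)^{d-1}$ signs shows that for $d\geq2$ some word in $\operatorname{swap}$ and $\operatorname{mantar}$ realizes $\operatorname{neg}$. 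In depth $2$, for instance, one checks directly that $(\operatorname{swap}\circ\operatorname{mantar})^{6}=\operatorname{neg}$. In depth $1$ the reversal relation is vacuous and $\operatorname{neg}\notin G$, which is exactly why the definition of $\BARI$ imposes that $A_1$ be even; this hypothesis supplies the $d=1$ case directly. Once $\operatorname{neg}\in G$ (respectively the $d=1$ hypothesis) is established, $\operatorname{neg}(A)=A$ is immediate, i.e. every $A_d$ is even.

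The main obstacle is precisely this group-theoretic identification: verifying, uniformly in $d\geq2$, that the total negation is a word in $\operatorname{swap}$ and $\operatorname{mantar}$, which amounts to controlling the order and the sign bookkeeping of the $\operatorname{push}$ operator. I would therefore organize the write-up so that the antipode reduction to the $\operatorname{mantar}$ relation and the statement of the two symmetries are fully self-contained, and so that the push-operator order computation (with the even depth-one component as the seed for $d=1$) is the single place where Ecalle's and Schneps's machinery is invoked.
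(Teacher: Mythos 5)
Your antipode step (alternality $\Rightarrow$ the $\operatorname{mantar}$ relation) is sound and matches the mechanism used in the proof of Proposition \ref{prop:lq_rho_inv}, but the load-bearing claim of your proposal --- that for every $d\geq2$ some word in $\operatorname{swap}$ and $\operatorname{mantar}$ realizes $\operatorname{neg}$ --- is false, and the strategy collapses with it. Your depth-$2$ identity $(\operatorname{swap}\circ\operatorname{mantar})^{6}=\operatorname{neg}$ is correct, but already in depth $3$ (where $\operatorname{mantar}$ carries the sign $(-1)^{3-1}=+1$) one computes that $\rho:=\operatorname{swap}\circ\operatorname{mantar}$ has order $8$, so $G:=\langle\operatorname{swap},\operatorname{mantar}\rangle$ is dihedral of order $16$; since $\operatorname{neg}$ commutes with every signed linear substitution, $\operatorname{neg}\in G$ would force $\operatorname{neg}\in Z(G)=\{\operatorname{id},\rho^{4}\}$, while $\rho^{4}$ is the substitution $\binom{X_1,X_2,X_3}{Y_1,Y_2,Y_3}\mapsto\binom{X_3-X_2,\,-X_2,\,X_1-X_2}{Y_3,\,-Y_1-Y_2-Y_3,\,Y_1}$, not $\operatorname{neg}$. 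The failure is not a bookkeeping issue but a loss of information in your very first reduction: for $d\geq3$, alternality is strictly stronger than $\operatorname{mantar}$-invariance, and evenness genuinely uses the extra strength. Concretely, the odd polynomial
\[
f\binom{X_1,X_2,X_3}{Y_1,Y_2,Y_3}=X_1-X_2+X_3+Y_1+Y_3
\]
is invariant under both $\operatorname{mantar}$ and the paper's swap in depth $3$ (but is not alternal: e.g.\ the shuffle relation for $D_{2,0}\shuffle D_{1,0}D_{1,0}$ fails), so \emph{no} argument using only these two symmetries can conclude evenness in depth $3$.

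Your appeal to the push operator is moreover circular: push is not a word in $\operatorname{swap}$ and $\operatorname{mantar}$ alone. With the paper's conventions one checks that $\operatorname{neg}\circ(\operatorname{swap}\circ\operatorname{mantar})^{2}$ has order $d+1$ in depth $d$ and is exactly the push operator, so on bimoulds fixed by $\operatorname{swap}$ and $\operatorname{mantar}$ one has $\operatorname{push}(A)=\operatorname{neg}(A)$ (indeed $\operatorname{push}(f)=-f$ for the $f$ above). Thus evenness is \emph{equivalent} to push-invariance, and push-invariance is precisely the nontrivial content of \cite[Lemma 2.5.5]{Sc15}, proved there by using the full alternality of $A$ and of $\operatorname{swap}(A)$ together with the even depth-$1$ seed --- it is not a formal consequence of the two invariances. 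This is also exactly what the paper's proof does: for $A\in\BARI$ both $A$ and $\operatorname{swap}(A)=A$ are alternal and $A_1$ is even, so \cite[Lemma 2.5.5]{Sc15} (whose proof is written so as to apply to bimoulds) gives $\operatorname{neg}(A)=A$. To salvage your write-up you would have to reprove that lemma, i.e.\ run an induction on depth that uses the shuffle relations in each depth beyond the reversal symmetry; your depth-$2$ computation is correct but is the only depth in which the group-theoretic shortcut happens to work.
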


\begin{proof}
If $A\in\BARI$, then $A$ is by definition an alternal bimould and also $\operatorname{swap}(A)$ is alternal 5. Thus, by \cite[Lemma 2.5.5]{Sc15}\footnote{Though \cite[Lemma 2.5.5]{Sc15} is only stated for moulds, the proof is actually done for bimoulds. Moreover, $\binom{X_1,\ldots,X_d}{Y_1,\ldots,Y_d}\mapsto\binom{Y_d,\ldots,Y_1}{X_d,\ldots,Y_1}$ is an automorphism on alternal bimoulds, so also with the slight modification of swap invariance we can still apply \cite[Lemma 2.5.5.]{Sc15}.} the bimould $A$ is invariant under the substitution $A_d\binom{X_1,\ldots,X_d}{Y_1,\ldots,Y_d}\mapsto A_d\binom{-X_1,\ldots,-X_d}{-Y_1,\ldots,-Y_d}$. This means, the component $A_d$ is an even polynomial for each $d\geq1$.
\end{proof}

We construct an isomorphism from the space $\lqq$ introduced in Section $\ref{sec:def_lq}$ to $\BARI$. We use the description of $\lqq$ in the alphabet $\Dbi$ explained in Lemma \ref{lem:lqq_in_Dbi}.
By the \emph{depth} of a word $w\in \QDbi$ we mean the number of its letters. We write $\QDbi_d$ for the homogeneous subspace of $\QDbi$ of depth $d$, and for $f\in \QDbi$ we denote by $f_d$ its projection to $\QDbi_d$.

\begin{defi} \label{def:bimap} For each $d\geq1$, define the isomorphism 
\begin{align*}
\bimap_d: \QDbi_d&\to \QQ[X_1,Y_1,\ldots,X_d,Y_d], \\
D_{k_1,m_1}\cdots D_{k_d,m_d}&\mapsto X_1^{k_1-1}Y_1^{m_1}\cdots X_d^{k_d-1}Y_d^{m_d}.
\end{align*}
Then, for any $f\in \QDbi$, we define the bimould $\bimap(f)$ by $\bimap(f)_0=0$ and
\[\bimap(f)_d\binom{X_1,\ldots,X_d}{Y_1,\ldots,Y_d}=\bimap_d(f_d),\qquad d\geq1.\]
\end{defi}

\begin{prop} \label{prop:bimap_vsiso} We have an isomorphism of vector spaces
\[\lqq\to \BARI,\quad \Phi\mapsto \bimap(\Phi).\]
\end{prop}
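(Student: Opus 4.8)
The plan is to check that the vector-space isomorphism $\bimap$ matches the three defining properties of $\lqq$, in the form given in Lemma~\ref{lem:lqq_in_Dbi}, with the three defining properties of $\BARI$. Since each $\bimap_d$ is an isomorphism $\QDbi_d\to\QQ[X_1,Y_1,\ldots,X_d,Y_d]$ and every $\Phi\in\lqq$ is a finite sum, $\bimap$ already defines a bijection between $\QDbi$ and the space of finitely supported bimoulds; it therefore suffices to show that $\Phi$ satisfies the conditions of Lemma~\ref{lem:lqq_in_Dbi} if and only if $\bimap(\Phi)$ is alternal, swap invariant, and has $\bimap(\Phi)_0=0$ with $\bimap(\Phi)_1$ even.

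The first step is to identify the coefficient map. Comparing Definition~\ref{def:bimap} with the definition of $\varphi_A$ shows that $\varphi_{\bimap(\Phi)}(D_{k_1,m_1}\cdots D_{k_d,m_d})=(\Phi\mid D_{k_1,m_1}\cdots D_{k_d,m_d})$, i.e.\ the coefficient map of $\bimap(\Phi)$ is just coefficient extraction from $\Phi$. Hence $\bimap(\Phi)$ is alternal if and only if $(\Phi\mid v\shuffle w)=0$ for all nonempty $v,w\in\QDbi$, which by the duality recalled around \eqref{eq:primitive_zero_on_prod} (applied verbatim to the alphabet $\Dbi$) is equivalent to $\Phi$ being primitive for $\shco$, that is $\Phi\in\LieDbi$ by \cite{Re93}. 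The normalization $\bimap(\Phi)_0=0$ holds by Definition~\ref{def:bimap}. For the depth-one component,
\[\bimap(\Phi)_1\binom{X_1}{Y_1}=\sum_{k\geq1,\ m\geq0}(\Phi\mid D_{k,m})\,X_1^{k-1}Y_1^{m}\]
is an even polynomial precisely when every contributing monomial has $(k-1)+m$ even, i.e.\ when $(\Phi\mid D_{k,m})=0$ for all $k+m$ even; this is exactly condition (ii') of Lemma~\ref{lem:lqq_in_Dbi}.

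The heart of the proof---and the step I expect to be the main obstacle---is to match condition (iv'), the $\tau_{\Dbi}$-invariance, with swap invariance. I would establish the intertwining identity
\[\bimap\circ\tau_{\Dbi}=\operatorname{swap}\circ\bimap\]
on all of $\QDbi$, after which (iv') $\Leftrightarrow$ swap invariance is immediate. To prove it I would fix a depth $d$ and apply $\bimap_d$ to the explicit formula for $\tau_{\Dbi}(D_{k_1,m_1}\cdots D_{k_d,m_d})$ stated before Lemma~\ref{lem:lqq_in_Dbi}, reading the result as a generating-series identity in $X_1,Y_1,\ldots,X_d,Y_d$. The crucial point is that the multinomial coefficients $\binom{k_s-1}{l_1^{(s)},\ldots,l_s^{(s)}}$ appearing there are exactly those produced by expanding powers of the partial sums $Y_i+\cdots+Y_d$, while the factors $\binom{m_s}{n_s}(-1)^{m_s+n_s}$ are exactly those produced by expanding powers of the differences $X_i-X_{i-1}$; together with the reversal of the word and the index shift $k_i\leftrightarrow m_i+1$, this exhibits the right-hand side as the effect of the substitution
\[\binom{X_1,\ldots,X_d}{Y_1,\ldots,Y_d}\longmapsto\binom{Y_d,Y_{d-1}+Y_d,\ldots,Y_1+\cdots+Y_d}{X_d-X_{d-1},\ldots,X_2-X_1,X_1}\]
defining $\operatorname{swap}$. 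Keeping these two families of binomial expansions, the reversal, and the index shifts aligned simultaneously is the genuinely delicate bookkeeping; should the direct computation prove unwieldy, an alternative is to verify the identity in depths $1$ and $2$ and then induct on $d$, exploiting the factorization $\tau_{\Dbi}=\sec\circ\tau\circ\pizero$ and tracking separately how $\sec$, $\pizero$, and $\tau$ act on generating series.

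Finally I would assemble the pieces: by the three equivalences above, $\bimap(\Phi)\in\BARI$ if and only if $\Phi\in\LieDbi$ satisfies (ii') and (iv'), i.e.\ if and only if $\Phi\in\lqq$ by Lemma~\ref{lem:lqq_in_Dbi}. As $\bimap$ is a bijection between $\QDbi$ and finitely supported bimoulds and these conditions correspond exactly, its restriction $\bimap\colon\lqq\to\BARI$ is a vector-space isomorphism, as claimed.
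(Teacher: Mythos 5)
Your proposal is correct and matches the paper's proof in all essentials: the paper likewise identifies the coefficient map of $\bimap(\Phi)$ with coefficient extraction to translate alternality into membership in $\LieDbi$, reads condition (ii') of Lemma \ref{lem:lqq_in_Dbi} as evenness of $\bimap(\Phi)_1$, and establishes $\bimap\circ\tau_{\Dbi}=\operatorname{swap}\circ\bimap$ by applying $\bimap_d$ to the explicit formula for $\tau_{\Dbi}(D_{k_1,m_1}\cdots D_{k_d,m_d})$. Your attribution of the multinomial coefficients to the expansion of the partial sums $Y_i+\cdots+Y_d$ and of the signed binomials $\binom{m_s}{n_s}(-1)^{m_s+n_s}$ to the expansion of the differences $X_{i+1}-X_i$ is exactly right, and is in fact spelled out in more detail than in the paper, which simply asserts the identity from the displayed formula.
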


\begin{proof} Similar to \eqref{eq:primitive_zero_on_prod}, an element $\Phi$ lies in $\LieDbi$ if and only if the map \[\operatorname{coef}_\Phi:\QDbi\to\QQ,\ w\mapsto (\Phi\mid w)\] 
satisfies 
\[\operatorname{coef}_\Phi(v\shuffle w)=0\quad  \text{ for all } v,w \in \QDbi\backslash\QQ.\]  
Since $\operatorname{coef}_\Phi$ is also the coefficient map of the bimould $\bimap(\Phi)$, we deduce the equivalence of being a Lie element and alternality under the map $\bimap$.
Condition (ii') in the alternative description of $\lqq$ given in Lemma \ref{lem:lqq_in_Dbi} is equivalent to $\bimap(\Phi)_1$ being an even polynomial. 
So finally, we have to consider the $\tau_{\Dbi}$-invariance given in condition (iv') of Lemma \ref{lem:lqq_in_Dbi}. Recall that we have
\begin{align*}
&\tau_{\Dbi}(D_{k_1,m_1}\cdots D_{k_d,m_d})=\sum_{\substack{l_1^{(s)}+\cdots+l_s^{(s)}=k_s-1,\ s=1,\ldots,d \\ n_1+\cdots+n_{d-1}+n=m_1+\cdots+m_d}} \prod_{s=1}^d \binom{k_s-1}{l_1^{(s)},\ldots,l_s^{(s)}} \binom{m_s}{n_s}(-1)^{m_s+n_s} \\
&\cdot D_{m_d+m_{d-1}-n_{d-1}+1,l_d^{(d)}}D_{n_{d-1}+m_{d-2}-n_{d-2}+1,l_{d-1}^{(d-1)}+l_{d-1}^{(d)}}\cdots D_{n_2+m_1-n_1+1,l_2^{(2)}+\cdots+l_2^{(d)}}D_{n_1+1,l_1^{(1)}+\cdots l_1^{(d)}},
\end{align*}
where $n_d:=m_d$. Thus, we deduce from the definition of $\bimap$ that
\begin{align*}
&\bimap\big(\tau_{\Dbi}(D_{k_1,m_1}\cdots D_{k_d,m_d})\big)_d\binom{X_1,\ldots,X_d}{Y_1,\ldots,Y_d}\\
&\hspace{6cm}=\bimap(D_{k_1,m_1}\cdots D_{k_d,m_d})_d\binom{Y_d,Y_{d-1}+Y_d,\ldots,Y_1+\cdots+Y_d}{X_d-X_{d-1},\ldots,X_2-X_1,X_1}.
\end{align*}
Since $\bimap$ is $\QQ$-linear, we get for all $f\in\QDbi$
\[\bimap\big(\tau_{\Dbi}(f)\big)=\operatorname{swap}(\bimap(f)).\]
Therefore, $\tau_{\Dbi}$-invariance of $\Phi\in\lqq$ is equivalent to $\operatorname{swap}$-invariance of $\bimap(\Phi)$.
\end{proof}

We compare the Lie algebra structures on $\lqq$ and $\BARI$ via the map $\bimap$. First, we introduce the Lie algebra structure on $\BARI$ from \cite{Ec11}, \cite{Sc15}.

\begin{defi} \label{def ari} For bimoulds $A,B$, define the bimould $\operatorname{arit}_B(A)$ by
\begin{align*}
\operatorname{arit}_B(A)_d&\binom{X_1,\ldots,X_d}{Y_1,\ldots,Y_d}\\
&=\sum_{j=1}^d\sum_{i=1}^{d-j} A_{d-j}\binom{X_1,\ldots,X_{i-1},X_i,X_{i+j+1},\ldots,X_d}{Y_1,\ldots,Y_{i-1},Y_i+\cdots+Y_{i+j},Y_{i+j+1},\ldots,Y_d} \\
&\hspace{8cm} \cdot B_j\binom{X_{i+1}-X_i,\ldots, X_{i+j}-X_i}{Y_{i+1},\ldots,Y_{i+j}} \\
&-\sum_{j=1}^d\sum_{i=1}^{d-j} A_{d-j}\binom{X_1,\ldots,X_{i-1},X_{i+j},X_{i+j+1},\ldots,X_d}{Y_1,\ldots,Y_{i-1},Y_i+\cdots+Y_{i+j},Y_{i+j+1},\ldots,Y_d} \\
&\hspace{7.8cm} \cdot B_j\binom{X_i-X_{i+j},\ldots, X_{i+j-1}-X_{i+j}}{Y_i,\ldots,Y_{i+j-1}}. 
\end{align*}
Then, we let $\operatorname{ari}(A,B)$ be given by
\begin{align} \label{eq:def_ari}
\operatorname{ari}(A,B)=\operatorname{arit}_A(B)-\operatorname{arit}_B(A)+\operatorname{mu}(A,B)-\operatorname{mu}(B,A).
\end{align}
\end{defi} 

\begin{prop} \label{prop:BARI_Liealg} \cite{Sc15} The pair $(\BARI,\operatorname{ari})$ is a Lie algebra.
\end{prop}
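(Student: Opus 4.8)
The plan is to deduce the proposition by \emph{transporting} the Lie algebra structure already available on the combinatorial side to the space of bimoulds, rather than verifying antisymmetry and the Jacobi identity for $\operatorname{ari}$ directly. Concretely, I would show that the linear isomorphism $\bimap$ of Definition \ref{def:bimap}, viewed on all of $\QDbi$ (where in each depth $\bimap_d$ identifies $\QDbi_d$ with $\QQ[X_1,Y_1,\ldots,X_d,Y_d]$), intertwines the two brackets:
\[\bimap(\ari{f}{g})=\operatorname{ari}(\bimap(f),\bimap(g)),\qquad f,g\in\QDbi.\]
Granting this identity, the proposition follows formally. Every element of $\BARI$ is of the form $\bimap(\Phi)$ with $\Phi\in\lqq$ by Proposition \ref{prop:bimap_vsiso}, and for $\Phi,\Psi\in\lqq$ we have $\operatorname{ari}(\bimap(\Phi),\bimap(\Psi))=\bimap(\ari{\Phi}{\Psi})$, which lies in $\BARI$ because $\ari{\Phi}{\Psi}\in\lqq$ by Theorem \ref{thm:lqq_Lie_alg}. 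Hence $\operatorname{ari}$ restricts to $\BARI$, and its antisymmetry and Jacobi identity are transported from those of $\ariemp$ on $\lqq$ through the bijection $\bimap$.

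The real content is therefore the intertwining identity, which by \eqref{eq:def_ari} and the defining formula $\ari{f}{g}=\der{f}(g)-\der{g}(f)+[f,g]$ splits into two pieces. The commutator part corresponds to the $\operatorname{mu}$ part once one checks the elementary fact $\bimap(v\cdot w)=\operatorname{mu}(\bimap(v),\bimap(w))$, which is immediate from the depthwise definitions of $\bimap$ and $\operatorname{mu}$. The essential identity is then
\[\bimap(\der{f}(g))=\operatorname{arit}_{\bimap(f)}(\bimap(g)).\]
I would establish this by reduction to depth one. Both $\der{f}$ (by Definition \ref{def:Lie_bracket_lq}) and $\operatorname{arit}_{\bimap(f)}$ are derivations, the former for concatenation and the latter for $\operatorname{mu}$; since $\bimap$ carries concatenation to $\operatorname{mu}$ and the letters $D_{i,n}$ generate $\QDbi$, it suffices to compare $\bimap(\der{f}(D_{i,n}))$ with $\operatorname{arit}_{\bimap(f)}(X_1^{i-1}Y_1^n)$ on a single letter.

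For this depth-one comparison I would use the splitting $\der{}=\derr{}-\derl{}$ from \eqref{eq:dA_in_dAl_dAr}, matching $\derr{f}$ with the first sum and $\derl{f}$ with the second sum of Definition \ref{def ari}. Writing $f=D_{k_1,m_1}\cdots D_{k_d,m_d}$, under $\bimap$ the multinomial weights $\binom{n}{p_1,\ldots,p_d,p}$ occurring in \eqref{eq:def_Liebracket_lq} collapse to the factor $(Y_1+\cdots+Y_{d+1})^n$, reproducing the merged $Y$-argument $Y_i+\cdots+Y_{i+j}$ of $\operatorname{arit}$, while the signed binomials $(-1)^l\prod_s\binom{k_s-1}{l_s-1}$ reproduce the shifted variables via the expansion $\prod_s(X_{s+1}-X_1)^{k_s-1}$, matching the substitution $X_{i+1}-X_i,\ldots$ in the first sum (and $X_i-X_{i+j},\ldots$ for $\derl{}$ in the second).

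I expect the main obstacle to be bookkeeping rather than conceptual: confirming that $\operatorname{arit}_B$ is genuinely a $\operatorname{mu}$-derivation (which legitimizes the reduction to depth one) and that the two insertion directions of Definition \ref{def ari}, together with the nontrivial merging of the $Y$-variables, line up \emph{exactly} with $\derr{}$ and $\derl{}$ including all signs. The derivation property can be quoted from \cite{Sc15} or verified directly from the insertion-position structure of the two sums; once it is in place, the depth-one identity is the clean binomial-coefficient computation indicated above, and the whole argument closes with Theorem \ref{thm:lqq_Lie_alg} and Proposition \ref{prop:bimap_vsiso}.
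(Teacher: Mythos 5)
Your proposal is correct, but it takes a genuinely different route from the paper. The paper proves Proposition \ref{prop:BARI_Liealg} entirely on the bimould side, taking for granted (from \cite{Ec11}, \cite{Sc15}) that $\operatorname{ari}$ satisfies antisymmetry and Jacobi on the ambient space of bimoulds, and only verifying that $\BARI$ is closed under the bracket: alternality is preserved by \cite[Proposition 2.5.2]{Sc15}, bialternality forces push invariance by \cite[Lemma 2.5.5]{Sc15}, and push invariance yields $\operatorname{swap}\big(\operatorname{ari}(\operatorname{swap}(A),\operatorname{swap}(B))\big)=\operatorname{ari}(A,B)$ by \cite[Lemma 2.4.1]{Sc15}, whence swap invariance of the bracket. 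You instead transport the Lie structure from $(\lqq,\ariemp)$ (Theorem \ref{thm:lqq_Lie_alg}) through $\bimap$, with the intertwining identity $\bimap(\ari{f}{g})=\operatorname{ari}(\bimap(f),\bimap(g))$ as the main step --- which is exactly the computation the paper carries out later in the proof of Theorem \ref{thm:lq_BARI_Lieiso}, though there it is done directly on words rather than by your reduction to depth one via the $\operatorname{mu}$-derivation property of $\operatorname{arit}$ (a standard fact from \cite{Sc15} that you correctly flag as needing a citation or a direct check; note also that $\der{f}$ restricts to a concatenation derivation of $\QDbi$ since $\der{f}(b_0)=0$, so the reduction is legitimate on both sides). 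Your direction of transport is non-circular within the paper's architecture: the proof of Theorem \ref{thm:lqq_Lie_alg} does not use Proposition \ref{prop:BARI_Liealg}; its only bimould input is Theorem \ref{thm:lq_parity}, which rests on Propositions \ref{prop:bimap_vsiso} and \ref{prop:bari_parity}, neither of which involves the ari bracket. What the paper's approach buys is a short proof independent of the lengthy combinatorics of Section \ref{sec:lq_Lie_alg_proof} and Appendix \ref{ap:proof_prop_dA0_and_tau} --- which is precisely what permits the remark after Theorem \ref{thm:lq_BARI_Lieiso} that Proposition \ref{prop:BARI_Liealg} together with the intertwining gives an \emph{alternative} proof that $(\lqq,\ariemp)$ is a Lie algebra; under your architecture that remark would become circular. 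What your route buys in exchange is that antisymmetry and the Jacobi identity for $\operatorname{ari}$ on $\BARI$ are obtained without quoting the ambient Lie algebra structure from Ecalle--Schneps at all (though \cite[Lemma 2.5.5]{Sc15} still enters implicitly through the dependency of Theorem \ref{thm:lqq_Lie_alg} on Theorem \ref{thm:lq_parity} and Proposition \ref{prop:bari_parity}, so you do not escape \cite{Sc15} entirely).
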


\begin{proof} By \cite[Proposition 2.5.2]{Sc15}\footnote{The results in \cite{Sc15} are formulated only for moulds, but the proofs are done for bimoulds.}, the ari bracket preserves the space of alternal bimoulds. Next, any $A\in\BARI$ is an alternal bimould and $\operatorname{swap}(A)$ is also alternal. So, we can apply \cite[Lemma 2.5.5]{Sc15} and obtain that any $A\in\BARI$ is a push invariant bimould. Thus, by \cite[Lemma 2.4.1]{Sc15} we get
\[\operatorname{swap}\big(\operatorname{ari}\big(\operatorname{swap}(A),\operatorname{swap}(B)\big)\big)=\operatorname{ari}(A,B), \qquad A,B\in\BARI.\]
So, $\operatorname{ari}(A,B)$ is also swap invariant for $A,B\in\BARI$.
\end{proof}

\begin{thm} \label{thm:lq_BARI_Lieiso} We have an isomorphism of Lie algebras
\[\big(\lqq,\ariemp\big)\to \big(\BARI,\operatorname{ari}\big),\quad \Phi\mapsto\bimap(\Phi).\]
\end{thm}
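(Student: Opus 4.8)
The plan is to show that the vector-space isomorphism $\bimap$ from Proposition \ref{prop:bimap_vsiso} intertwines the two Lie brackets, i.e.\ $\bimap(\ari{\Phi}{\Psi}) = \operatorname{ari}(\bimap(\Phi),\bimap(\Psi))$ for all $\Phi,\Psi\in\lqq$. Since $\bimap$ is already known to be a bijection between the underlying spaces, once this bracket compatibility is established we are done. By $\QQ$-linearity it suffices to check the identity on monomials, and because both brackets decompose into a "derivation part" and a "commutator/concatenation part," the natural strategy is to match these pieces separately under $\bimap$.

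First I would record how $\bimap$ translates the three ingredients of $\ariemp$ appearing in \eqref{eq:def_liebracket_lq}. The concatenation product on $\QDbi$ corresponds under $\bimap$ precisely to the mould multiplication $\operatorname{mu}$: a word $D_{k_1,m_1}\cdots D_{k_d,m_d}$ maps to the monomial $X_1^{k_1-1}Y_1^{m_1}\cdots X_d^{k_d-1}Y_d^{m_d}$, and concatenating two words juxtaposes their monomials with shifted indices, which is exactly the defining formula for $\operatorname{mu}$. Hence $\bimap([f,g]) = \operatorname{mu}(\bimap(f),\bimap(g)) - \operatorname{mu}(\bimap(g),\bimap(f))$ is immediate. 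The real content is to verify
\[
\bimap\big(\der{f}(g)\big) = \operatorname{arit}_{\bimap(f)}(\bimap(g)),\qquad f,g\in\QDbi,
\]
after which combining with the $\operatorname{mu}$-comparison and the antisymmetric structure of both \eqref{eq:def_liebracket_lq} and \eqref{eq:def_ari} yields the claim.

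To prove the $\operatorname{arit}$-identity I would take $g = D_{i_1,n_1}\cdots D_{i_r,n_r}$ and compute $\der{f}(g)$ via the explicit formula in \eqref{eq:def_Liebracket_lq}: since $\der{f}$ is a derivation for concatenation, it acts as a sum over the letters $D_{i_s,n_s}$, each producing a commutator $[D_{i_s+l,p},\,D_{l_1,m_1+p_1}\cdots]$ with the prescribed binomial weights. Applying $\bimap$ to a single such term, the commutator splits into two concatenations (the two orders), and the $\ad(b_0)$-shifts together with the summation over $l,p$ and the multinomial coefficients $\binom{n_s}{p_1,\dots,p_d,p}$ should reproduce, after the binomial expansions $(Y_{i+1}+\cdots+Y_{i+j})^{\text{powers}}$ and the argument shifts $X_{i+1}-X_i$, exactly the two sums in Definition \ref{def ari}. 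The two signs in $\operatorname{arit}$ match the two terms of the commutator, and the "insertion" of the block of $g$'s variables into position $i$ of $f$'s variables corresponds to which letter $D_{i_s,n_s}$ was differentiated. This is the step I expect to be the main obstacle: it is a direct but delicate bookkeeping computation in which the binomial coefficients coming from the post-Lie derivation $\der{}$ must be shown to coincide with the coefficients produced by substituting $Y_i+\cdots+Y_{i+j}$ and $X_{i+1}-X_i$ into the monomials, and keeping the index conventions aligned on both sides is where errors are likely to creep in.

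Finally, having matched $\der{f}(g)\leftrightarrow\operatorname{arit}_{\bimap(f)}(\bimap(g))$ and $[f,g]\leftrightarrow \operatorname{mu}(\bimap(f),\bimap(g))-\operatorname{mu}(\bimap(g),\bimap(f))$, the definition \eqref{eq:def_liebracket_lq} of $\ariemp$ and the definition \eqref{eq:def_ari} of $\operatorname{ari}$ assemble term by term, giving $\bimap(\ari{\Phi}{\Psi}) = \operatorname{ari}(\bimap(\Phi),\bimap(\Psi))$. Since Proposition \ref{prop:bimap_vsiso} already guarantees $\bimap$ is a linear isomorphism $\lqq\xrightarrow{\sim}\BARI$, and Theorem \ref{thm:lqq_Lie_alg} and Proposition \ref{prop:BARI_Liealg} guarantee both sides are Lie algebras, the bracket-compatibility upgrades $\bimap$ to a Lie algebra isomorphism, completing the proof.
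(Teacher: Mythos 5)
Your proposal is correct and follows essentially the same route as the paper: reduce to bracket compatibility via Proposition \ref{prop:bimap_vsiso}, observe that concatenation corresponds to $\operatorname{mu}$, and verify $\bimap\big(\der{f}(g)\big)=\operatorname{arit}_{\bimap(f)}\big(\bimap(g)\big)$ on monomials $f=D_{k_1,m_1}\cdots D_{k_d,m_d}$, $g=D_{i_1,n_1}\cdots D_{i_r,n_r}$. The bookkeeping step you flag as the main obstacle is exactly the computation the paper carries out, its key device being the identity \eqref{eq:ad(b0)_and_Y}, which says that multiplication of $\bimap(g)_r$ by $(Y_{j_1}+\cdots+Y_{j_2})^m$ corresponds under $\bimap$ to applying $\ad(b_0)^m$ to the corresponding block of letters; combined with the binomial expansion of the shifted arguments $X_{i+j}-X_i$ this reproduces precisely the coefficients in \eqref{eq:def_Liebracket_lq}.
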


\begin{proof} By Proposition \ref{prop:bimap_vsiso}, the map is a vector space isomorphism. Therefore, it suffices to show that
\begin{align*}
\bimap\big(\ari{\Phi}{\Psi}\big)=\operatorname{ari}\big(\bimap(\Phi),\bimap(\Psi)\big),\qquad \Phi,\Psi\in\lqq.
\end{align*}
It is straight-forward to see that
\begin{align*}
\bimap(fg)=\operatorname{mu}(\bimap(f),\bimap(g)), \qquad f,g\in \QDbi.
\end{align*}
The Lie bracket \eqref{eq:def_liebracket_lq} on $\lqq$ and the ari bracket \eqref{eq:def_ari} on $\BARI$ are defined in a similar way, thus we show that
\begin{align*}
\bimap\big(\der{f}(g)\big)=\operatorname{arit}_{\bimap(f)}\big(\bimap(g)\big),\qquad f,g\in\QDbi.
\end{align*}
Since all maps are $\QQ$-linear, we can assume $f=D_{k_1,m_1}\cdots D_{k_d,m_d}$ and $g=D_{i_1,n_1}\cdots D_{i_r,n_r}$, so the bimoulds $\bimap(f)$ and $\bimap(g)$ are concentrated in depth $d$ and $r$. First, observe that for $1\leq j_1\leq j_2\leq r$
\begin{align} \label{eq:ad(b0)_and_Y}
&(Y_{j_1}+\cdots+Y_{j_2})^m\bimap(g)_r\binom{X_1,\ldots,X_r}{Y_1,\ldots,Y_r} \nonumber \\
&=\sum_{p_{j_1}+\cdots+p_{j_2}=m} \binom{m}{p_{j_1},\ldots,p_{j_2}} X_1^{i_1-1}Y_1^{n_1}\cdots X_{j_1}^{i_{j_1}-1}Y_{j_1}^{n_{j_1}+p_{j_1}}\cdots X_{j_2}^{i_{j_2}-1}Y_{j_2}^{n_{j_2}+p_{j_2}}\cdots X_r^{i_r-1}Y_r^{n_r} \nonumber \\
&=\bimap(D_{i_1,n_1}\cdots \ad(b_0)^m(D_{i_{j_1},n_{j_1}}\cdots D_{i_{j_2},n_{j_2}})\cdots D_{i_r,n_r})_r\binom{X_1,\ldots,X_r}{Y_1,\ldots,Y_r}.
\end{align}
Thus, we get
\begin{align*}
&\operatorname{arit}_{\bimap(f)}\big(\bimap(g)\big)_{d+r}\binom{X_1,\ldots,X_{d+r}}{Y_1,\ldots,Y_{d+r}}\\ 
&=\sum_{s=1}^r \big(X_1^{i_1-1}Y_1^{n_1}\cdots X_{s-1}^{i_{s-1}-1}Y_{s-1}^{n_{s-1}}X_s^{i_s-1}(Y_s+\cdots+Y_{d+s})^{n_s}X_{d+s+1}^{i_{s+1}-1}Y_{d+s+1}^{n_{s+1}}\cdots X_{d+r}^{i_r-1}Y_{d+r}^{n_r}\big)\\
&\hspace{1.5cm}\cdot\big((X_{s+1}-X_s)^{k_1-1}Y_{s+1}^{m_1}\cdots (X_{s+d}-X_s)^{k_d-1}Y_{s+d}^{m_d}\big) \\
&- \sum_{s=1}^r \big(X_1^{i_1-1}Y_1^{n_1}\cdots X_{s-1}^{i_{s-1}-1}Y_{s-1}^{n_{s-1}}X_{d+s}^{i_s-1}(Y_s+\cdots+Y_{d+s})^{n_s}X_{d+s+1}^{i_{s+1}-1}Y_{d+s+1}^{n_{s+1}}\cdots X_{d+r}^{i_r-1}Y_{d+r}^{n_r}\big)\\
&\hspace{1.5cm}\cdot\big((X_s-X_{d+s})^{k_1-1}Y_s^{m_1}\cdots (X_{d+s-1}-X_{d+s})^{k_d-1}Y_{d+s-1}^{m_d}\big) \\
&\overset{\eqref{eq:ad(b0)_and_Y}}{=}\sum_{s=1}^r \sum_{l_1=1}^{k_1}\cdots \sum_{l_d=1}^{k_d} (-1)^{k_1-l_1+\cdots+k_d-l_d} \binom{k_1-1}{l_1-1}\cdots \binom{k_d-1}{l_d-1} \\
&\hspace{0.4cm}\cdot\bimap\Big(D_{i_1,n_1}\cdots D_{i_{s-1},n_{s-1}}\ad(b_0)^{n_s}\big(D_{i_s+k_1-l_1+\cdots+k_d-l_d,0}D_{l_1,m_1}\cdots D_{l_d,m_d}\big)D_{i_{s+1},n_{s+1}}\cdots D_{i_r,n_r}\\
&\hspace{0.5cm}-D_{i_1,n_1}\cdots D_{i_{s-1},n_{s-1}}\ad(b_0)^{n_s}\big(D_{l_1,m_1}\cdots D_{l_d,m_d}D_{i_s+k_1-l_1+\cdots+k_d-l_d,0}\big)D_{i_{s+1},n_{s+1}}\cdots D_{i_r,n_r}\Big)_{d+r} \\
&\hspace{13.64cm} \binom{X_1,\ldots,X_{d+r}}{Y_1,\ldots,Y_{d+r}}\\
&\overset{\eqref{eq:def_Liebracket_lq}}{=} \bimap\big(\der{f}(g)\big)_{d+r}\binom{X_1,\ldots,X_{d+r}}{Y_1,\ldots,Y_{d+r}}. \qedhere
\end{align*}
\end{proof}
In particular, Theorem \ref{thm:lq_BARI_Lieiso} together with Proposition \ref{prop:BARI_Liealg} provides an alternative proof  that $(\lqq,\ariemp)$ forms a Lie algebra, cf Section \ref{sec:lq_Lie_alg_proof}.

\vspace{0.3cm}
For a bimould $A$, we define $\delta(A)$ by
\begin{align*}
\delta(A)_d\binom{X_1,\ldots,X_d}{Y_1,\ldots,Y_d}=\big(X_1Y_1+\cdots+X_dY_d\big)A_d\binom{X_1,\ldots,X_d}{Y_1,\ldots,Y_d}.
\end{align*}
Evidently, we have for any $f\in \QDbi$ that
\begin{align*}
\delta(\bimap(f))=\bimap(\delta(f)),
\end{align*}
where $\delta$ on $\QDbi$ is given in Definition \ref{def:deriv}. Therefore, a direct consequence of Theorem \ref{thm:lq_deriv} and Theorem \ref{thm:lq_BARI_Lieiso} is the following.
\begin{prop} The tuple $(\BARI,\operatorname{ari},\delta)$ is a differential Lie algebra.
\end{prop}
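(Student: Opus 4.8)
The plan is to transport the entire structure from $(\lqq,\ariemp,\delta)$ to $(\BARI,\operatorname{ari})$ along the isomorphism $\bimap$ of Theorem \ref{thm:lq_BARI_Lieiso}, using the intertwining identity $\delta\circ\bimap=\bimap\circ\delta$ recorded just above (where on the left $\delta$ is the bimould operator and on the right it is the derivation of Definition \ref{def:deriv}). Since $\lqq\subset\QDbi$, this identity applies to every element of $\lqq$, and by $\QQ$-linearity of both $\delta$ and $\bimap$ it holds on all of $\lqq$, not merely on generators. Recall also that $(\BARI,\operatorname{ari})$ is already a Lie algebra by Proposition \ref{prop:BARI_Liealg}, so the only things left to establish are that $\delta$ preserves $\BARI$ and that it is a derivation for $\operatorname{ari}$.

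First I would check that $\delta$ maps $\BARI$ into itself. Given $A\in\BARI$, Proposition \ref{prop:bimap_vsiso} lets me write $A=\bimap(\Phi)$ for a unique $\Phi\in\lqq$; by Theorem \ref{thm:lq_deriv} we have $\delta(\Phi)\in\lqq$, and therefore $\delta(A)=\delta(\bimap(\Phi))=\bimap(\delta(\Phi))\in\bimap(\lqq)=\BARI$. Next I would verify the Leibniz rule for $\operatorname{ari}$. Writing $A=\bimap(\Phi)$ and $B=\bimap(\Psi)$, I would chain the three established facts: use Theorem \ref{thm:lq_BARI_Lieiso} to replace $\operatorname{ari}(A,B)$ by $\bimap(\ari{\Phi}{\Psi})$, push $\delta$ through $\bimap$ via the intertwining identity, expand $\delta(\ari{\Phi}{\Psi})=\ari{\delta(\Phi)}{\Psi}+\ari{\Phi}{\delta(\Psi)}$ by the differential Lie algebra property of $\lqq$ (Theorem \ref{thm:lq_deriv}), and finally translate back through Theorem \ref{thm:lq_BARI_Lieiso} together with a second application of the intertwining identity. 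Concisely, this is the computation
\begin{align*}
\delta(\operatorname{ari}(A,B)) &=\bimap\big(\delta(\ari{\Phi}{\Psi})\big) =\bimap\big(\ari{\delta(\Phi)}{\Psi}\big)+\bimap\big(\ari{\Phi}{\delta(\Psi)}\big) \\
&=\operatorname{ari}\big(\delta(A),B\big)+\operatorname{ari}\big(A,\delta(B)\big).
\end{align*}

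Together with Proposition \ref{prop:BARI_Liealg}, these two checks establish the claim. The argument is a purely formal transport of structure, so I do not expect any genuine obstacle; the only point deserving a moment's care is confirming that the intertwining identity $\delta\circ\bimap=\bimap\circ\delta$ is invoked on elements of $\lqq$ rather than on single letters of $\Dbi$, which is immediate from the $\QQ$-linearity of both maps.
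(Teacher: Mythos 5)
Your proof is correct and takes exactly the paper's route: the paper likewise obtains the proposition as a direct consequence of Theorem \ref{thm:lq_deriv} and Theorem \ref{thm:lq_BARI_Lieiso} via the intertwining identity $\delta(\bimap(f))=\bimap(\delta(f))$ on $\QDbi$. You have merely spelled out the transport-of-structure computation that the paper leaves implicit.
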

This proposition can be also verified by straight-forward calculations. 
\begin{rem} The depth-graded version of the derivation given in \cite[Definition 4.10]{BI24} equals exactly $\delta$. In particular, we expect that this derivation translates to a derivation on the non depth-graded space $\bmzero$ from Remark \ref{rem:lq_grDZ_q_not_inj}.
\end{rem}
\begin{rem} There is also a formulation of the embedding $\theta:(\ls,\{-,-\})\hookrightarrow(\lqq,\ariemp)$ from Theorem \ref{thm:emb_ls_lq} given in \cite{Kü19}. Roughly, this isomorphism maps a bialternal, even mould $A=(A_d(X_1,\ldots,X_d))_{d\geq0}\in \operatorname{ARI}_{\underline{\operatorname{al}}/\underline{\operatorname{al}}}$ to the bimould $\iota(A)\in\BARI$ given by
\[\iota(A)\binom{X_1,\ldots,X_d}{Y_1,\ldots,Y_d}=A(X_1,\ldots,X_d)+\operatorname{swap}(A)(Y_1,\ldots,Y_d).\]
\end{rem}

\appendix

\section{Proof of Proposition \ref{prop:dA0_and_tau}} \label{ap:proof_prop_dA0_and_tau}

We start with (i):
\[\big(\tau\circ \derrzero{\tau(w)}\circ \tau\big)(v)= \derrzero{w}(v)+w \conczero v-\tau\big(\tau(w)\conczero\tau(v)\big).\]
Let $k_1,\ldots,k_d,s_1,\ldots,s_e\geq1$, $m_1,\ldots,m_d,t_1,\ldots,t_e\geq0$. Then, with \eqref{eq:dAr0_expl} we calculate
\begin{align}  \label{eq:dAr0}
&\tau\left(\derrzero{\tau(b_0^{m_1}b_{k_1}\cdots b_0^{m_d}b_{k_d})}\big(\tau(b_0^{t_1}b_{s_1}\cdots b_0^{t_e}b_{s_e})\big)\right) \nonumber\\
&=\tau\left(\derrzero{b_0^{k_d-1}b_{m_d+1}\cdots b_0^{k_1-1}b_{m_1+1}}(b_0^{s_e-1}b_{t_e+1}\cdots b_0^{s_1-1}b_{t_1+1})\right) \nonumber\\
&= \sum_{i=1}^e \sum_{\substack{n_1+\cdots+n_d+n=m_1+\cdots+m_d \\ l_1+\cdots+l_d+l=k_1+\cdots+k_d \\ n_s,n,l\geq0,\ l_s\geq1,\ l=0 \text{ if } i=1}} (-1)^{l+n} \prod_{s=1}^d \binom{m_s}{n_s}\binom{k_s-1}{l_s-1} \nonumber\\
&\tau\big(b_0^{s_e-1}b_{t_e+1}\cdots b_0^{s_{i+1}-1}b_{t_{i+1}+1}b_0^{s_i-1}\big(b_{t_i+n+1}b_0^{l_d-1}b_{n_d+1}\cdots b_0^{l_1-1}b_{n_1+1}b_0^l\big)b_0^{s_{i-1}-1}b_{t_{i-1}+1}\cdots b_0^{s_1-1}b_{t_1+1}\big) \nonumber\\
&=\sum_{i=1}^e \sum_{\substack{n_1+\cdots+n_d+n=m_1+\cdots+m_d \\ l_1+\cdots+l_d+l=k_1+\cdots+k_d \\ n_s,n,l\geq0,\ l_s\geq1,\ l=0 \text{ if } i=1}} (-1)^{l+n} \prod_{s=1}^d \binom{m_s}{n_s}\binom{k_s-1}{l_s-1} \nonumber \\
&\hspace{2cm}b_0^{t_1}b_{s_1}\cdots b_0^{t_{i-2}}b_{s_{i-2}}b_0^{t_{i-1}}\big(b_{s_{i-1}+l}b_0^{n_1}b_{l_1}\cdots b_0^{n_d}b_{l_d}b_0^n\big)b_0^{t_i}b_{s_i}\cdots b_0^{t_e}b_{s_e} \nonumber \\
&=\sum_{i=1}^e \sum_{\substack{n_1+\cdots+n_d+n=m_1+\cdots+m_d \\ l_1+\cdots+l_d+l=k_1+\cdots+k_d \\ n_s,n,l\geq0,\ l_s\geq1,\ n=0 \text{ if } i=e}} (-1)^{l+n} \prod_{s=1}^d \binom{m_s}{n_s}\binom{k_s-1}{l_s-1} \nonumber \\
&\hspace{2cm}b_0^{t_1}b_{s_1}\cdots b_0^{t_{i-1}}b_{s_{i-1}}b_0^{t_i}\big(b_{s_i+l}b_0^{n_1}b_{l_1}\cdots b_0^{n_d}b_{l_d}b_0^n\big)b_0^{t_{i+1}}b_{s_{i+1}}\cdots b_0^{t_e}b_{s_e} \nonumber \\
&+\sum_{\substack{n_1+\cdots+n_d+n=m_1+\cdots+m_d \\ n_s,n\geq0}} (-1)^n \prod_{s=1}^d \binom{m_s}{n_s}b_0^{n_1}b_{k_1}\cdots b_0^{n_d}b_{k_d}b_0^{n+t_1}b_{s_1}\cdots b_0^{t_e}b_{s_e} \nonumber\\
&- \sum_{\substack{l_1+\cdots+l_d+l=k_1+\cdots+k_d \\ l_s\geq1,\ l\geq0}} (-1)^l \prod_{s=1}^d \binom{k_s-1}{l_s-1} b_0^{t_1}b_{s_1}\cdots b_0^{t_e}b_{s_e+l}b_0^{m_1}b_{l_1}\cdots b_0^{m_d}b_{l_d} \nonumber \\
&= \derrzero{b_0^{m_1}b_{k_1}\cdots b_0^{m_d}b_{k_d}}(b_0^{t_1}b_{s_1}\cdots b_0^{t_e}b_{s_e}) + b_0^{m_1}b_{k_1}\cdots b_0^{m_d}b_{k_d} \conczero b_0^{t_1}b_{s_1}\cdots b_0^{t_e}b_{s_e} \nonumber \\
&\hspace{0.4cm}- \tau\big(\tau(b_0^{m_1}b_{k_1}\cdots b_0^{m_d}b_{k_d})\conczero \tau(b_0^{t_1}b_{s_1}\cdots b_0^{t_e}b_{s_e})\big). \nonumber
\end{align}
Here, the last equality follows from
\begin{align*}
&b_0^{m_1}b_{k_1}\cdots b_0^{m_d}b_{k_d} \conczero b_0^{t_1}b_{s_1}\cdots b_0^{t_e}b_{s_e}= \pizero\big(\sec(b_0^{m_1}b_{k_1}\cdots b_0^{m_d}b_{k_d}) \sec( b_0^{t_1}b_{s_1}\cdots b_0^{t_e}b_{s_e})\big) \\
&\hspace{2cm}=\sec(b_0^{m_1}b_{k_1}\cdots b_0^{m_d}b_{k_d}) \pizero(\sec( b_0^{t_1}b_{s_1}\cdots b_0^{t_e}b_{s_e})) \\
&\hspace{2cm}=\sum_{\substack{n_1+\cdots+n_d+n=m_1+\cdots+m_d \\ n_s,n\geq0}} (-1)^n \prod_{s=1}^d \binom{m_s}{n_s}b_0^{n_1}b_{k_1}\cdots b_0^{n_d}b_{k_d}b_0^{n+t_1}b_{s_1}\cdots b_0^{t_e}b_{s_e}, 
\end{align*}
and
\begin{align*}
&\tau\big(\tau(b_0^{m_1}b_{k_1}\cdots b_0^{m_d}b_{k_d})\conczero \tau(b_0^{t_1}b_{s_1}\cdots b_0^{t_e}b_{s_e})\big) \\
& = \tau\big(b_0^{k_d-1}b_{m_d+1}\cdots b_0^{k_1-1}b_{m_1+1})\conczero b_0^{s_e-1}b_{t_e+1}\cdots b_0^{s_1-1}b_{t_1+1}) \\
&= \sum_{\substack{l_1+\cdots+l_d+l=k_1+\cdots+k_d \\ l_s\geq1,\ l\geq0}} (-1)^l \prod_{s=1}^d \binom{k_s-1}{l_s-1}\tau(b_0^{l_d-1}b_{m_d+1}\cdots b_0^{l_1-1}b_{m_1+1}b_0^{l+s_e-1}b_{t_e+1}\cdots b_0^{s_1-1}b_{t_1+1}) \\
&=\sum_{\substack{l_1+\cdots+l_d+l=k_1+\cdots+k_d \\ l_s\geq1,\ l\geq0}} (-1)^l \prod_{s=1}^d \binom{k_s-1}{l_s-1} b_0^{t_1}b_{s_1}\cdots b_0^{t_e}b_{s_e+l}b_0^{m_1} b_{l_1}\cdots b_0^{m_d}b_{l_d}.
\end{align*}

Next, we prove (ii):
\[\tau\circ \derlzero{\tau(w)}\circ \tau=\derlzero{\rho(w)}.\] 
For $k_1,\ldots,k_d,s_1,\ldots,s_e\geq1$, $m_1,\ldots,m_d,t_1,\ldots,t_e\geq0$, we compute with \eqref{eq:dAl0_expl}
\begin{align}  \label{eq:dAl0_tau}
&\tau\left(\derlzero{\tau(b_0^{m_1}b_{k_1}\cdots b_0^{m_d}b_{k_d})}\big(\tau(b_0^{t_1}b_{s_1}\cdots b_0^{t_e}b_{s_e})\big)\right) \nonumber\\
&=\tau\left(\derlzero{b_0^{k_d-1}b_{m_d+1}\cdots b_0^{k_1-1}b_{m_1+1}}(b_0^{s_e-1}b_{t_e+1}\cdots b_0^{s_1-1}b_{t_1+1})\right) \nonumber\\
&= \sum_{i=1}^e \sum_{\substack{n_1+\cdots+n_d+n=m_1+\cdots+m_d \\ l_1+\cdots+l_d+l=k_1+\cdots+k_d \\ n_s,n,l\geq0,\ l_s\geq1}} (-1)^{l+n} \prod_{s=1}^d \binom{m_s}{n_s}\binom{k_s-1}{l_s-1} \nonumber\\
&\tau\big(b_0^{s_e-1}b_{t_e+1}\cdots b_0^{s_{i+1}-1}b_{t_{i+1}+1}b_0^{s_i-1}\big(b_0^{l_d-1}b_{n_d+1}\cdots b_0^{l_1-1}b_{n_1+1}b_0^lb_{t_i+n+1}\big)b_0^{s_{i-1}-1}b_{t_{i-1}+1}\cdots b_0^{s_1-1}b_{t_1+1}\big) \nonumber\\
&=\sum_{i=1}^e \sum_{\substack{n_1+\cdots+n_d+n=m_1+\cdots+m_d \\ l_1+\cdots+l_d+l=k_1+\cdots+k_d \\ n_s,n,l\geq0,\ l_s\geq1}} (-1)^{l+n} \prod_{s=1}^d \binom{m_s}{n_s}\binom{k_s-1}{l_s-1}  \\
&\hspace{2cm}b_0^{t_1}b_{s_1}\cdots b_0^{t_{i-1}}b_{s_{i-1}}b_0^{t_i}\big(b_0^nb_{l+1}b_0^{n_1}b_{l_1}\cdots b_0^{n_d}b_{s_i+l_d-1}\big)b_0^{t_{i+1}}b_{s_{i+1}}\cdots b_0^{t_e}b_{s_e}. \nonumber
\end{align}
On the other hand, we have again with \eqref{eq:dAl0_expl}
\begin{align} \label{eq:dAl0_rho}
&\derlzero{\rho(b_0^{m_1}b_{k_1}\cdots b_0^{m_d}b_{k_d})}(b_0^{t_1}b_{s_1}\cdots b_0^{t_e}b_{s_e}) \nonumber\\
&=\sum_{\substack{l_1+\cdots+l_d=k_1+\cdots+k_d \\ n_1+\cdots+n_d=m_1+\cdots+m_d \\ l_s\geq1,\ n_s\geq0}} (-1)^{l_d+n_d-1}\prod_{s=1}^{d-1} \binom{k_s-1}{l_s-1} \binom{m_s}{n_s} \derlzero{b_0^{n_d}b_{l_d}b_0^{n_1}b_{l_1}\cdots b_0^{n_{d-1}}b_{l_{d-1}}}(b_0^{t_1}b_{s_1}\cdots b_0^{t_e}b_{s_e}) \nonumber\\
&= \sum_{i=1}^e \sum_{\substack{\overline{l}_1+\cdots+\overline{l}_d+\overline{l}=l_1+\cdots+l_d=k_1+\cdots+k_d \\ \overline{n}_1+\cdots+\overline{n}_d+\overline{n}=m_1+\cdots+m_d \\ l_s,\overline{l}_s\geq1,\ n_s,\overline{n}_s,\overline{n},\overline{l}\geq0}} (-1)^{\overline{l}+\overline{n}+l_d+n_d-1} \prod_{s=1}^{d-1}  \binom{k_s-1}{l_s-1} \binom{l_s-1}{\overline{l}_s-1} \binom{m_s}{n_s} \binom{n_s}{\overline{n}_s} \\
& \binom{l_d-1}{\overline{l}_d-1}\binom{n_d}{\overline{n}_d} b_0^{t_1}b_{s_1}\cdots b_0^{t_{i-1}}b_{s_{i-1}}b_0^{t_i}\big(b_0^{\overline{n}_d}b_{\overline{l}_d}b_0^{\overline{n}_1}b_{\overline{l}_1}\cdots b_0^{\overline{n}_{d-1}}b_{\overline{l}_{d-1}}b_0^{\overline{n}}b_{s_i+\overline{l}}\big) b_0^{t_{i+1}}b_{s_{i+1}}\cdots b_0^{t_e}b_{s_e}. \nonumber
\end{align}
Let $\lambda_1,\cdots,\lambda_{d-1},\lambda,\overline{\lambda} \geq 1$, $\lambda_1+\cdots+\lambda_{d-1}+\lambda+\overline{\lambda}=k_1+\cdots+k_d$, and $\mu_1,\ldots,\mu_{d-1},\mu,\overline{\mu}\geq0$, $\mu_1+\cdots+\mu_{d-1}+\mu+\overline{\mu}=m_1+\cdots+m_d$. Then, the coefficient of the word
\begin{align*}
b_0^{t_1}b_{s_1}\cdots b_0^{t_{i-1}}b_{s_{i-1}}b_0^{t_i}\big(b_0^{\mu}b_{\lambda}b_0^{\mu_1}b_{\lambda_1}\cdots b_0^{\mu_{d-1}}b_{\lambda_{d-1}}b_0^{\overline{\mu}}b_{s_i+\overline{\lambda}}\big)b_0^{t_{i+1}}b_{s_{i+1}}\cdots b_0^{t_e}b_{s_e}
\end{align*}
in \eqref{eq:dAl0_tau} is given by
\begin{align} \label{eq:dAl0_coeff1}
(-1)^{\lambda+\mu-1} \binom{m_d}{\overline{\mu}}\binom{k_d-1}{\overline{\lambda}}\prod_{s=1}^{d-1} \binom{m_s}{\mu_s}\binom{k_s-1}{\lambda_s-1},
\end{align}
and in \eqref{eq:dAl0_rho} the coefficient is given by
\begin{align} \label{eq:dAl0_coeff2}
&(-1)^{\overline{\lambda}+\overline{\mu}-1} \sum_{\substack{l_1+\cdots+l_d=k_1+\cdots+k_d \\ n_1+\cdots+n_d=m_1+\cdots+m_d \\ l_s\geq1,\ n_s\geq0}} (-1)^{l_d+n_d} \binom{l_d-1}{\lambda-1}\binom{n_d}{\mu}\prod_{s=1}^{d-1} \binom{k_s-1}{l_s-1} \binom{l_s-1}{\lambda_s-1} \binom{m_s}{n_s}\binom{n_s}{\mu_s} \nonumber\\
&=(-1)^{\overline{\lambda}+\overline{\mu}-1} \sum_{\substack{l_1+\cdots+l_d=k_1+\cdots+k_d \\ n_1+\cdots+n_d=m_1+\cdots+m_d \\ l_s\geq1,\ n_s\geq0}} (-1)^{l_d+n_d}\binom{l_d-1}{\lambda-1}\binom{n_d}{\mu}\\
&\hspace{8,8cm}\prod_{s=1}^{d-1} \binom{k_s-1}{\lambda_s-1} \binom{k_s-\lambda_s}{l_s-\lambda_s} \binom{m_s}{\mu_s}\binom{m_s-\mu_s}{n_s-\mu_s}. \nonumber
\end{align}
To prove (ii) we have to verify that the values in \eqref{eq:dAl0_coeff1} and \eqref{eq:dAl0_coeff2} agree, i.e., we have to show that
\begin{align} \label{eq:dAl0_coeff1_equal_coeff2}
&(-1)^{\lambda+\mu} \binom{m_d}{\overline{\mu}}\binom{k_d-1}{\overline{\lambda}} \\
&=(-1)^{\overline{\lambda}+\overline{\mu}} \sum_{\substack{l_1+\cdots+l_d=k_1+\cdots+k_d \\ n_1+\cdots+n_d=m_1+\cdots+m_d \\ l_s\geq1,\ n_s\geq0}} (-1)^{l_d+n_d} \binom{l_d-1}{\lambda-1}\binom{n_d}{\mu} \prod_{s=1}^{d-1}  \binom{k_s-\lambda_s}{l_s-\lambda_s} \binom{m_s-\mu_s}{n_s-\mu_s}.
\end{align}
We prove this equality in two steps. First we consider the terms of the equations containing $l_1,\ldots,l_d,\lambda_1,\ldots,\lambda_{d-1},\lambda,\overline{\lambda}$, and then the parts with $n_1,\ldots,n_d,\mu_1,\ldots,\mu_{d-1},\mu,\overline{\mu}$. First, we have
\begin{align} \label{eq:dAl0_coeff_calc_kl}
&(-1)^{\overline{\lambda}} \sum_{\substack{l_1+\cdots+l_d=k_1+\cdots+k_d \\ l_s\geq 1}} (-1)^{l_d}\binom{l_d-1}{\lambda-1} \prod_{s=1}^{d-1} \binom{k_s-\lambda_s}{l_s-\lambda_s} \\
&=(-1)^{\overline{\lambda}+k_1+\cdots+k_d+\lambda_1+\cdots+\lambda_{d-1}} \sum_{l_1=0}^{k_1-\lambda_1}\cdots \sum_{l_{d-1}=0}^{k_{d-1}-\lambda_{d-1}} \nonumber \\
&\hspace{3.8cm}\binom{k_1+\cdots+k_d-l_1-\lambda_1-\cdots-l_{d-1}-\lambda_{d-1}-1}{\lambda-1}\prod_{s=1}^{d-1} (-1)^{l_s} \binom{k_s-\lambda_s}{l_s} \nonumber \\
&=(-1)^{\lambda} \sum_{l_1=0}^{k_1-\lambda_1}\cdots \sum_{l_{d-1}=0}^{k_{d-1}-\lambda_{d-1}} \binom{k_1+\cdots+k_d-l_1-\lambda_1-\cdots-l_{d-1}-\lambda_{d-1}-1}{\overline{\lambda}-l_1-\cdots-l_{d-1}} \nonumber\\
&\hspace{12.7cm}\prod_{s=1}^{d-1} (-1)^{l_s} \binom{k_s-\lambda_s}{l_s} \nonumber\\
&=(-1)^{\lambda}\binom{k_d-1}{\overline{\lambda}}. \nonumber
\end{align}
where we used in the third step that $\lambda_1+\cdots+\lambda_{d-1}+\lambda+\overline{\lambda}=k_1+\cdots+k_d$, and in the last step \cite[Lemma 3.23]{Bu23}. Similarly, we compute
\begin{align} \label{eq:dAl0_coeff_calc_mn}
&(-1)^{\overline{\mu}} \sum_{\substack{n_1+\cdots+n_d=m_1+\cdots+m_d \\ n_s\geq0}} (-1)^{n_d} \binom{n_d}{\mu} \prod_{s=1}^{d-1} \binom{m_s-\mu_s}{n_s-\mu_s} \\
&=(-1)^{\overline{\mu}+m_1+\cdots+m_d+\mu_1+\cdots+\mu_{d-1}} \sum_{n_1=0}^{m_1-\mu_1}\cdots \sum_{n_{d-1}=0}^{m_{d-1}-\mu_{d-1}} \nonumber\\
&\hspace{4cm}\binom{m_1+\cdots+m_d-n_1-\mu_1-\cdots-n_{d-1}-\mu_{d-1}}{\mu}\prod_{s=1}^{d-1}(-1)^{n_s} \binom{m_s-\mu_s}{n_s} \nonumber\\
&=(-1)^{\mu}\sum_{n_1=0}^{m_1-\mu_1}\cdots \sum_{n_{d-1}=0}^{m_{d-1}-\mu_{d-1}} \binom{m_1+\cdots+m_d-n_1-\mu_1-\cdots-n_{d-1}-\mu_{d-1}}{\overline{\mu}-n_1-\cdots-n_{d-1}} \nonumber\\
&\hspace{12.6cm}\prod_{s=1}^{d-1}(-1)^{n_s} \binom{m_s-\mu_s}{n_s} \nonumber\\
&=(-1)^{\mu} \binom{m_d}{\overline{\mu}}. \nonumber
\end{align}
The calculations in \eqref{eq:dAl0_coeff_calc_kl} and \eqref{eq:dAl0_coeff_calc_mn} together show the claimed identity \eqref{eq:dAl0_coeff1_equal_coeff2}.

\section{List of maps}

For the reader's convenience, we list all maps in this work and their first occurrence.

\renewcommand{\arraystretch}{1.7}
\begin{table}[H]
\begin{tabular}{p{4.4cm}p{9.5cm}c}
\textbf{Section \ref{sec:balanced_qmzv}.} & & \\ \hline
$\wt:\QB\to \mathbb{Z}$ & $b_{s_1}\cdots b_{s_l}\mapsto s_1+\cdots+s_l+\#\{i\mid s_i=0\}$ & eq. \eqref{eq:wt_dep_QB}\\
$\dep:\QB\to\mathbb{Z}$ & $b_{s_1}\cdots b_{s_l}\mapsto l-\#\{i\mid s_i=0\}$ & eq. \eqref{eq:wt_dep_QB}\\
$\ast_b:\QB^{\otimes2}\to\QB$ & $	b_iv\ast_b b_jw=b_i(v\ast_b b_jw)+b_j(b_iv\ast_b w)+\delta_{ij>0} b_{i+j}(v\ast_b w)$ & Def. \ref{def:bal_quasish}\\
$\tau:\QB^0\to\QB^0$ & $b_0^{m_1}b_{k_1}\cdots b_0^{m_d}b_{k_d} \mapsto b_0^{k_d-1}b_{m_d+1}\cdots b_0^{k_1-1}b_{m_1+1}$ & Def. \ref{def:tau} 
\end{tabular} 
\end{table}
\begin{table}[H]
\begin{tabular}{p{4.4cm}p{9.5cm}c}
\textbf{Section \ref{sec:def_lq}.} & & \\ \hline
$\grD:\QB\to \QB$ & $\sum\limits_{l=0}^d \phi_l\mapsto\phi_d$ & eq. \eqref{eq:def_grD} \\
$\shuffle:\QB^{\otimes2}\to\QB$ & $b_iv\shuffle b_j w= b_i(v\shuffle b_j w)+b_j(b_iv\shuffle w)$ & eq. \eqref{eq:shuffle}\\
$\shco:\QB\to \QB^{\otimes2}$ & $b_i\mapsto b_i\otimes 1+1\otimes b_i$ & eq. \eqref{eq:shco_B} 
\end{tabular} 
\end{table}
\begin{table}[H]
\begin{tabular}{p{4.4cm}p{9.5cm}c}
$\pizero:\QB\to \QB^0$ & $ wb_0\mapsto 0$ & Def. \ref{def:lq} \\
\end{tabular} 
\end{table}
\begin{table}[H]
\begin{tabular}{p{4.4cm}p{9.5cm}c}
\textbf{Section \ref{sec:lq_Lie_alg}.} & & \\ \hline
$\der{w}:\QB\to \QB$ & $b_i\mapsto\hspace{-0.3cm}\sum\limits_{\substack{l_1+\cdots+l_d+l \\=k_1+\cdots+k_d}}(-1)^l\prod\limits_{s=1}^d\binom{k_s-1}{l_s-1}[b_{i+l},b_0^{m_1}b_{l_1}\cdots b_0^{m_d}b_{l_d}b_0^{m_{d+1}}]$ & Def. \ref{def:Lie_bracket_lq} \\
& for $w=b_0^{m_1}b_{k_1}\cdots b_0^{m_d}b_{k_d}b_0^{m_{d+1}}$ & \\
$\ari{\cdot}{\cdot}:\QB^{\otimes2}\to\QB$ & $\ari{f}{g}= \der{f}(g)-\der{g}(f)+[f,g]$ & Def \ref{def:Lie_bracket_lq} \\
\end{tabular} 
\end{table}
\begin{table}[H]
\begin{tabular}{p{4.4cm}p{9.5cm}c}
\textbf{Section \ref{sec:lq_Lie_alg_proof}.} & & \\ \hline
$\sec:\QB^0\to\QB$ & $b_0^{m_1}b_{k_1}\cdots b_0^{m_d}b_{k_d}\mapsto\sum\limits_{\substack{n_1+\cdots+n_d+n \\ =m_1+\cdots+m_d}} (-1)^n\prod\limits_{s=1}^d\binom{m_s}{n_s}$ & eq. \eqref{eq:sec_expl} \\
& $\hspace{6.5cm}b_0^{n_1}b_{k_1}\cdots b_0^{n_d}b_{k_d}b_0^n$ & \\
$\partial_0:\QB\to\QB$ & $b_0\mapsto 1,\ b_i\mapsto0$, $i\geq1$ & eq. \eqref{eq:def_d0} \\
$\rho:\QB^0\to\QB^0$ & $b_0^{m_1}b_{k_1}\cdots b_0^{m_d}b_{k_d}\mapsto\hspace{-0.2cm}\sum\limits_{\substack{l_1+\cdots+l_d=k_1+\cdots+k_d \\ n_1+\cdots+n_d=m_1+\cdots+m_d}} \hspace{-0.5cm} (-1)^{l_d+n_d-1}$ & Def. \ref{def:rho} \\ & $\hspace{3cm}\prod\limits_{s=1}^{d-1} \binom{k_s-1}{l_s-1}\binom{m_s}{n_s}b_0^{n_d}b_{l_d}b_0^{n_1}b_{l_1}\cdots b_0^{n_{d-1}}b_{l_{d-1}}$ & \\
$S:\QB\to \QB$ & $b_{s_1}\cdots b_{s_l}\mapsto (-1)^l b_{s_l}\cdots b_{s_1}$ & eq. \eqref{eq:antipode} \\
$S_0:\QB^0\to\QB^0$ & $S_0=\pizero\circ S\circ \sec$ & eq. \eqref{eq:S_0} \\
$\derr{w}:\QB\to\QB$ & $b_i\mapsto\sum\limits_{\substack{l_1+\cdots+l_d+l \\ =k_1+\cdots+k_d}}(-1)^l\prod\limits_{s=1}^d \binom{k_s-1}{l_s-1} b_{i+l}b_0^{m_1}b_{l_1}\dots b_0^{m_d}b_{l_d}b_0^{m_{d+1}}$ & eq. \eqref{eq:derr} \\
& for $w=b_0^{m_1}b_{k_1}\dots b_0^{m_d}b_{k_d}b_0^{m_{d+1}}$ & \\
$\derl{w}:\QB\to\QB$ & $b_i\mapsto\sum\limits_{\substack{l_1+\cdots+l_d+l \\ =k_1+\cdots+k_d}}(-1)^l\prod\limits_{s=1}^d \binom{k_s-1}{l_s-1} b_0^{m_1}b_{l_1}\dots b_0^{m_d}b_{l_d}b_0^{m_{d+1}}b_{i+l}$ & eq. \eqref{eq:derl} \\
& for $w=b_0^{m_1}b_{k_1}\dots b_0^{m_d}b_{k_d}b_0^{m_{d+1}}$ & \\
$\derrzero{w}:\QB^0\to\QB^0$ & $\derrzero{w}=\pizero\circ \derr{\sec(w)}\circ\sec$ & eq. \eqref{eq:dAr0_expl} \\
$\derlzero{w}:\QB^0\to\QB^0$ & $\derlzero{w}=\pizero\circ \derl{\sec(w)}\circ\sec$ & eq. \eqref{eq:dAl0_expl} \\
$\derzero{w}:\QB^0\to\QB^0$ & $\derzero{w}=\pizero\circ \der{\sec(w)}\circ\sec$ & eq. \eqref{eq:derzero} 
\end{tabular} 
\end{table}
\begin{table}[H]
\begin{tabular}{p{4.4cm}p{9.5cm}c}
$\conczero:(\QB^0)^{\otimes2}\to\QB^0$ & $v \conczero w=\pizero\big(\sec(v),\sec(w)\big) $ & eq. \eqref{eq:conzero} \\
$\arizero{\cdot}{\cdot}:(\QB^0)^{\otimes2}\to$ & $\arizero{v}{w}=\pizero\big(\ari{\sec(v)}{\sec(w)}\big)$ & eq. \eqref{eq:arizero} \\
$\hspace{3cm}\QB^0$ & &
\end{tabular} 
\end{table}
\begin{table}[H]
\begin{tabular}{p{4.4cm}p{9.5cm}c}
\textbf{Section \ref{sec:deriv_on_lq}.} & & \\ \hline
$\tau_{\Dbi}:\QDbi\to\QDbi$ & $\tau_{\Dbi}=\sec\circ\tau\circ\pizero$ & eq. \eqref{eq:def_tau_Dbi} \\
$\delta:\QDbi\to\QDbi$ & $D_{k,m}\mapsto D_{k+1,m+1}$ & Def. \ref{def:deriv}
\end{tabular} 
\end{table}
\begin{table}[H]
\begin{tabular}{p{4.4cm}p{9.5cm}c}
\textbf{Section \ref{sec:ls_and_lq}.} & & \\ \hline
$\shco:\QX\to \QX^{\otimes2}$ & $x_i\mapsto x_i\otimes 1 +1 \otimes x_i$ & eq. \eqref{eq:shco_QX} \\
$\shco:\QY\to \QY^{\otimes2}$ & $y_i\mapsto y_i\otimes 1 +1 \otimes y_i$ & eq. \eqref{eq:shco_QY} \\
$\piY:\QX\to\QY$ & $x_0^{k_1-}x_1\cdots x_0^{k_d-1}x_1\mapsto y_{k_1}\cdots y_{k_d}$ & eq. \eqref{eq:piY} \\
$d_w:\QX\to\QX$ & $x_0\mapsto 0,\ x_1\mapsto [x_1,w]$ & Def. \ref{def:Ihara_bracket} \\
$\{\cdot,\cdot\}:\QX^{\otimes2}\to\QX$ & $\{f,g\}=d_f(g)-d_g(f)+[f,g]$ & Def. \ref{def:Ihara_bracket}\\
$\thetaX: \QX\to \QB$ & $x_{s_1}\cdots x_{s_k} \mapsto b_{s_1}\cdots b_{s_k}$ & eq. \eqref{eq:thetaX} \\
$\thetaY:\QY\hookrightarrow \QB$ & $y_{k_1}\cdots y_{k_d} \mapsto b_{k_d}\cdots b_{k_1}$ & eq. \eqref{eq:thetaY} \\
$\theta:\ls\to\lqq$ & $\phi\mapsto \thetaX(\phi)+\thetaY(\piY(\phi))$ & Thm. \ref{thm:emb_ls_lq} 
\end{tabular} 
\end{table}
\begin{table}[H]
\begin{tabular}{p{4.4cm}p{9.5cm}c}
\textbf{Section \ref{sec:bimoulds}.} & & \\ \hline
$\bimap_d:\QDbi_d\to$ & $D_{k_1,m_1}\cdots D_{k_d,m_d}\mapsto X_1^{k_1-1}Y_1^{m_1}\cdots X_d^{k_d-1}Y_d^{m_d}$ & Def. \ref{def:bimap} \\
$\QQ[X_1,Y_1,\ldots,X_d,Y_d]$ & &
\end{tabular} 
\end{table}

\addcontentsline{toc}{section}{References}

\end{document}